\documentclass[11pt,reqno]{amsart}%long version
\pdfoutput=1
%%%%%%%%%%%%%%%Long/short version
\usepackage{etoolbox}
\newcommand{\appendixText}{}

%Uncomment all below for short/anonymous version
%  \newcommand{\sv}[1]{#1}%short version
%  \newcommand{\lv}[1]{}%short version
%  \newcommand{\toappendix}[1]{\gappto{\appendixText}{{#1}}}%short version
%Uncomment all below for long version
 \newcommand{\sv}[1]{}%full version
 \newcommand{\lv}[1]{#1}%full version
 \newcommand{\toappendix}[1]{#1}%full version

%%%%%%%%%%%%%%%%%END Long/short version macros

\usepackage{amsmath}
\usepackage{amssymb}
\usepackage[left=2.54cm,top=2.54cm,right=2.54cm,bottom=2.54cm]{geometry}
\usepackage{epsfig}
\usepackage{bm}
\usepackage[utf8]{inputenc}
\usepackage[T1]{fontenc}
\usepackage{algpseudocode}

\usepackage{hyperref}

\usepackage[backend=biber,style=numeric-comp,sorting=nyt]{biblatex}
\addbibresource{refs.bib}

\newtheorem{theorem}{Theorem}[section]
\newtheorem{corollary}[theorem]{Corollary}

\newtheorem{lemma}[theorem]{Lemma}

\newtheorem{prop}[theorem]{Proposition}

\theoremstyle{definition}
\newtheorem{remark}{Remark}

\usepackage[usenames,dvipsnames]{xcolor}

 % visible comments
% \newcommand{\xc}[1]{} % invisible comments

\newcommand{\remove}[1]{}

% \newcommand{\CMc}[1]{}

% Calum's macros
\newcommand{\scr}{\mathcal}
\newcommand{\mb}{\mathbb}
\newcommand{\til}{\widetilde}

\newcommand{\whp}{{\it w.h.p.}}
\newcommand{\iid}{{\it i.i.d.}}
\newcommand{\uar}{{\it u.a.r.}}

\lv{}

\DeclareMathOperator{\disc}{disc}

\newcommand{\norm}[1]{\left\lVert #1 \right\rVert}

\newcommand{\nat}{\mathbb{N}}

\newcommand{\Ber}{\textup{Ber}}
\newcommand{\Bin}{\textup{Bin}}

\newcommand{\eps}{\varepsilon}
\newcommand{\ex}{\mathrm{ex}}

\def\Pr{\mathbb{P}}
\def\pr{\mathbb{P}}

\def\ex{\mathbb{E}}
\def\var{\mathbf{Var}}

\def\le{\leqslant}
\def\ge{\geqslant}
\def\leq{\leqslant}

\makeatletter
\def\Ddots{\mathinner{\mkern1mu\raise\p@
\vbox{\kern7\p@\Hbbox{.}}\mkern2mu
\raise4\p@\Hbbox{.}\mkern2mu\raise7\p@\Hbbox{.}\mkern1mu}}
\makeatother

\newcommand{\df}{:=}

% \usepackage{hyperref}
% \usepackage[
% backend=biber,
% style=alphabetic,
% sorting=ynt
% ]{biblatex}

% \addbibresource{refs.bib}

\begin{document}

%LONG version header

\title{The Phase Transition of Discrepancy in Random Hypergraphs}

\sv{\author{Anonymous}}

\lv{
\author{Calum MacRury}
\address{Department of Computer Science, University of Toronto, Toronto, ON, Canada}
\email{cmacrury@cs.toronto.edu}

\author{Tom\'a\v{s} Masa\v{r}\'ik}
\address{Department of Applied Mathematics, Faculty of Mathematics and Physics, Charles University, Prague, Czech Republic~\&
Faculty of Mathematics, Informatics and Mechanics, University of Warsaw, Warsaw, Poland~\& Department of Mathematics, Simon Fraser University, Burnaby, BC, Canada}
\email{masarik@kam.mff.cuni.cz}

\author{Leilani Pai}
\address{Department of Mathematics, University of Nebraska-Lincoln, Lincoln NE, USA}
\email{lpai@huskers.unl.edu}

\author{Xavier P\'erez-Gim\'enez}
\address{Department of Mathematics, University of Nebraska-Lincoln, Lincoln NE, USA}
\email{xperez@unl.edu}
}

\maketitle

\begin{abstract}
Motivated by the Beck-Fiala conjecture, we study the discrepancy problem in two related models of random hypergraphs on $n$ vertices and $m$ edges. In the first ({\em edge-independent}) model, a random hypergraph $H_1$ is constructed by fixing a parameter $p$ and allowing each of the $n$ vertices to join each of the $m$ edges independently with probability $p$. In the parameter range in which $pn \rightarrow \infty$ and $pm \rightarrow \infty$, we show that with high probability (\whp) $H_1$ has discrepancy at least $\Omega(2^{-n/m} \sqrt{pn})$ when $m = O(n)$, and at least $\Omega(\sqrt{pn \log\gamma })$ when $m \gg n$,
where $\gamma = \min\{ m/n, pn\}$. In the second ({\em edge-dependent}) model, $d$ is fixed and each vertex of $H_2$ independently joins exactly $d$ edges uniformly at random.
We obtain analogous results for this model by generalizing the techniques used for the edge-independent model with $p=d/m$. Namely, for $d \rightarrow \infty$ and $dn/m \rightarrow \infty$, we prove that \whp\ $H_{2}$ has discrepancy at least $\Omega(2^{-n/m} \sqrt{dn/m})$ when $m = O(n)$, and at least $\Omega(\sqrt{(dn/m) \log\gamma})$ when $m \gg n$, where $\gamma =\min\{m/n, dn/m\}$. 
Furthermore, we obtain nearly matching asymptotic upper bounds on the discrepancy in both models (when $p=d/m$), in the dense regime of $m \gg n$. Specifically, we apply the partial colouring lemma of Lovett and Meka to show that \whp\ $H_{1}$ and $H_{2}$ each have discrepancy $O( \sqrt{dn/m}  \log(m/n))$, provided $d \rightarrow \infty$, $d n/m \rightarrow \infty$ and $m \gg n$. This result is algorithmic, and together with the work of Bansal and Meka characterizes how the discrepancy of each random hypergraph model transitions from $\Theta(\sqrt{d})$ to $o(\sqrt{d})$ as $m$ varies from $m=\Theta(n)$ to $m \gg n$.
\end{abstract}

\section{Introduction}
A {\bf hypergraph}\lv{\footnote{The definition of a hypergraph is equivalent to that of a set system, though we exclusively use the former terminology in this work.}} $H=(V,E)$ consists of a set $V=\{v_1,\ldots,v_n\}$ of $n$ vertices together with a multiset $E=\{e_1,\ldots,e_m\}$ of $m$ edges, where each edge $e_i$ is a subset of $V$. We denote the size of an edge $e$ as $|e|$. 
Note that $H$ is allowed to have duplicate edges (i.e.~we may have $e_i=e_{i'}$ for some $i\ne i'$).
We can bijectively represent $H$ by an $m\times n$ $\{0,1\}$-matrix $\bm{A}=\bm{A}(H) = (A_{i,j})_{1\le i\le m,\, 1\le j\le n}$, where $A_{i,j}=1$ if $v_j\in\ e_i$ and $A_{i,j}=0$ if $v_j\notin\ e_i$. We call $\bm A$ the {\bf incidence matrix} of $H$.
In particular, each pair of duplicate edges in $H$ corresponds to a pair of identical rows in $\bm A$.
Moreover, we define the {\bf degree} of a vertex $v_j$ of $H$ to be the number of edges containing that vertex, i.e.~the number of $1$'s in the $j$th column of $\bm A$.
A classical problem in discrepancy theory is to find a 2-colouring of the vertices of $H$ so that every edge is as ``balanced'' as possible.
To make this more precise, we define a \textbf{colouring} of $H$ to be a function $\psi: V \rightarrow \{-1,1\}$. This can be extended to a map $\psi:E\to\mathbb Z$, by defining
% \[
$
    \psi(e) := \sum_{v \in e} \psi(v)
$
% \]
for each $e\in E$. We call $|\psi(e)|$ the {\bf discrepancy} of edge $e$, and note that it measures how unbalanced the colouring $\psi$ is on that edge.
Further, the \textbf{discrepancy} of colouring $\psi$, denoted $\disc(\psi)$, is the discrepancy of the least balanced edge of $H$. That is, 
\[
    \disc(\psi):= \max_{e \in E}  |\psi(e)|
\]
Finally, we define the \textbf{discrepancy} of hypergraph $H$ as
\[
  \disc(H):= \min_{ \psi} \disc(\psi),
\]
where the minimization is over all colourings of $V$.

\lv{This and other related notions of combinatorial discrepancy have been studied from various angles and in different contexts. (For a more detailed introduction to the subject, we refer to books~\cite{Matousek_1999}, \cite{Chazelle_2000} and~\cite{Chen_2014}.)} % I understand wanting to include references here but this sentence seems nonsubstantive - lp
One of the central problems in discrepancy theory in the above setting is to bound the discrepancy of a hypergraph $H$ in terms of its maximum degree $d$. In~\cite{Beck_1981}, it was proven by Beck and Fiala 
that the discrepancy of $H$ is no larger than $2d-1$. Moreover, they conjectured that the correct
upper bound is of the order $O(d^{1/2})$. 
%
%To this date, 
There has been much work in trying to improve on the original bound of~\cite{Beck_1981}. Most recently, it was proven by Bukh~\cite{Bukh_2016} that $\disc(H) \le 2d - \lg^{*}(d)$, where $\lg^*$ is
the binary iterated logarithm. This of course yields no asymptotic improvement in terms of $d$, but to this date is the strongest upper bound known which solely depends on $d$.
If the upper bound is allowed dependence on the multiple parameters of the hypergraph, then there are results yielding improvements for hypergraphs in the correct range of parameters. For instance, Banaszczyk~\cite{Banaszczyk1998} showed that if $n:=|V|$,
then $\disc(H) = O( \sqrt{d \log n})$---a bound which was later made algorithmic by Bansal and Meka~\cite{Bansal_2018}.
Recently, Potukuchi~\cite{Potukuchi_2019} proved that, for $d$-regular $H$, $\disc(H) = O(\sqrt{d} + \lambda)$, where $\lambda := \max_{v \perp \bm{1},  \norm{v}_{2} =1} \norm{\bm{A} \, v}_{2}$ and
$\bm A$ is the incidence matrix of $H$.

In order to find upper bounds which depend solely on the maximum degree, restricted classes of hypergraphs have instead been studied. For example, if $H=(V,E)$ is assumed to
be both $d$-regular and $d$-uniform (that is, the incidence matrix $\bm A$ has exactly $d$ $1$'s in each column and row), then a folklore application of the Lov\'{a}sz local lemma can be used to show that there exists a colouring which achieves discrepancy $O( \sqrt{d \log d})$ (see~\cite{Ezra_2015, Potukuchi_2018} for details). 
\lv{%

}%
Another approach is to restrict one's attention to hypergraphs which are generated
randomly. In this work, we focus on two specific random hypergraph models. Both of these models
are defined as distributions over the set of hypergraphs with $n \ge 1$ vertices and
$m \ge 1$ edges. 

% The difference between these models lies in how their edges
% are constructed, as well as the degree regularity imposed on the second model.

%\textcolor{blue}{(Leilani: How much can we cut from the next two subsections? I'm not convinced that all of the background is necessary, particularly since the vector-balancing problem introduces notation that we don't need later. What is the moral of the story?)}

\subsection{The Edge-Independent Model}

In~\cite{Hoberg_2019}, Hoberg and Rothvoss introduced a random hypergraph model,
denoted $\mb{H}(n,m,p)$,
in which a probability parameter $0 \le  p \le 1$
is given (in addition to $n$ and $m$).
Their model, which we refer to as the
\textbf{edge-independent model}, is a distribution
on hypergraphs which we describe through the following randomized procedure:

\begin{itemize}
    \item Fix the vertex set $V=\{v_1, \ldots, v_n\}$.
    \item For each $1\leq i \leq m$, construct edge $e_i$ by placing each $v\in V$ in $e_i$ independently with probability $p$.
\end{itemize}

%Fix the vertex set $V=\{v_1,\ldots,v_n\}$, and construct $m$ randomly generated edges, denoted
%$e_{1}, \ldots , e_{m}$, as follows:
%\begin{itemize}
%\item For each $1 \le i \le m$ and $v \in V$, place $v$ in $e_{i}$ independently with probability $p$.
%\end{itemize}

We denote $E=\{e_{1}, \ldots ,e_{m}\}$
%(which is in general a multiset, since we may have duplicate edges), 
% we defined the edge set of a hypergraph as a multiset; I don't think the parenthetical is necessary - lp
and define $\mb{H}(n,m,p)$ to be the distribution of the random hypergraph $H=(V,E)$.
In other words, the entries of the incidence matrix $\bm A$ of $H$ are independent Bernoulli random variables of parameter $p$.
We write $H \sim \mb{H}(n,m,p)$
to indicate that $H$ is drawn from $\mb{H}(n,m,p)$.

% I am not sure this is the right place for the next paragraph, especially since for the edge-dependent model we have to refer back here. Maybe move this later?? - lp
If $m=m(n)$ and $p=p(n)$ are functions which depend on $n$,
then we say that $\mb{H}(n,m,p)$ satisfies a property  $Q=Q(n)$ \whp, provided that
$\mb{P}[\text{$H(n)$ satisfies $Q(n)$}] \rightarrow 1$ as $n \rightarrow \infty$,
where $H =H(n)$ is drawn from $\mb{H}(n,m,p)$. Often, we abuse terminology slightly and say that
the random hypergraph $H$ satisfies $Q$ \whp

% then suppose that $(Q_{n})_{n \ge 1}$ is a sequence of \textbf{events}; that is, for each $n \ge 1$, $Q_{n}$ is a subset of the hypergraphs which
% $\mb{H}(n,m(n),p(n))$ is supported on. We say that the sequence of distributions, $(\mb{H}(n,m(n),p(n)))_{n \ge 1}$
% satisfies $(Q_n)_{n \ge 1}$ \textbf{with high probability} (\whp), provided $\mb{P}[H_n \in Q_n] \rightarrow 1$ as $n \rightarrow \infty$,
% where $H_n \sim \mb{H}(n,m(n),p(n))$. Equivalently, we say that the sequence of random hypergraphs, $(H_n)_{n \ge 1}$, satisfies $(Q_{n})_{n \ge 1}$ \whp.
% When the dependence on $n$ in the sequence is clear, we abuse terminology slightly and
% say that $\mb{H}(n,m,p)$ (or $H$ drawn from $\mb{H}(n,m,p)$) satisfies $Q_n$ with high probability.

Hoberg and Rothvoss showed that, if $n \ge C_1 m^{2} \log m$ and $C_1  \log{n} /m \le p \le 1/2$ 
for some sufficiently large constant $C_1 >0$,
then $\disc(H) \le 1$ \whp\ for $H \sim \mb{H}(n,m,p)$. 
%Their results involve a Fourier-analytic approach,
%in which they hand-craft an appropriate local-limit-type theorem to bound the probability that the discrepancy
%of $H$ is small. 
A natural question left open by their work is whether or not
$H$ continues to have constant discrepancy when $n$ transitions from $C_1 m^{2} \log m$
to $\Theta(m \log m)$. Potukuchi~\cite{Potukuchi_2018} provided a positive answer to
this question for the special case when $p=1/2$ by showing that if $n \ge C_2 m \log m$ for $C_2 = (2 \log 2)^{-1}$, then \whp\ $\disc(H) \le 1$. 
Very recently, Altschuler and Niles-Weed~\cite{Altschuler2021} used
Stein's method~\cite{Holmes} in conjunction with the second moment method
to substantially generalize this result to hold when $p =p(n)$ depends on $n$.
This includes the challenging case when $p(n) \rightarrow 0$.

\subsection{The Edge-Dependent Model}
A related model was introduced by Ezra and Lovett in~\cite{Ezra_2015}.
As before, we fix $n \ge 1$ and $m \ge 1$, yet we now also consider a parameter $d \ge 1$ which satisfies $d \le m$. 
The \textbf{edge-dependent model}, denoted $\scr{H}(n,m,d)$, is again a distribution
on hypergraphs, though we describe it through a different randomized procedure:

\begin{itemize}
    \item Fix vertex set $V=\{v_1, \ldots, v_n\}$.
    \item For each vertex $v\in V$, independently and \uar\ (uniformly at random) draw $I_{v} \subseteq [m] $ with $|I_v| =d$.
    \item For each $1\leq i \leq m$, construct edge $e_i$ by defining $e_i:= \{u\in V: i\in I_u\}$.
\end{itemize}

By setting $E:=\{e_{1}, \ldots ,e_{m}\}$,
%(with possible duplicate edges)
we define $\scr{H}(n,m,d)$ to be the distribution of the random hypergraph $H=(V,E)$.
In other words, the incidence matrix $\bm A$ of $H\sim\scr{H}(n,m,d)$ is a random $m\times n$ matrix where each column has $d$ ones and $m-d$ zeros. Note that the columns of $\bm A$ are independent, but the rows are not.
We define what it means
for $\scr{H}(n,m,d)$ to satisfy a property \whp\ in the same way as in the edge-independent model.

Ezra and Lovett showed that, if $m \ge n \ge d \to\infty$, then \whp\ $\disc(H) = O(\sqrt{d \log d})$. Bansal and Meka~\cite{Bansal_2018} later showed that the factor of $\sqrt{\log d}$ is redundant, thereby matching the bound claimed in the Beck-Fiala
conjecture. Specifically, they showed that, for the entire range of $n$ and $m$,
$\disc(H) = O( \sqrt{d})$ \whp, provided $d = \Omega( (\log \log m)^{2})$. 
This result can be easily modified to also apply to the edge-independent model, provided the analogous condition $pm = \Omega( (\log \log m )^{2})$ holds.

In~\cite{Cole2020}, Franks and Saks considered the more general problem of \textbf{vector balancing}.
%That is,
%given an $m \times n$ real-valued matrix $\bm{A}$, the goal is to minimize the \textbf{discrepancy} of $\bm{A}$, denoted
%$\disc(\bm{A})$, where
%\[
%    \disc(\bm{A}):= \min_{\bm{v} \in \{-1,1\}^{n}} \norm{\bm{A} \,\bm{v}}_{\infty},
%\]
%and $\norm{\bm{A} \, \bm{v}}_{\infty}$ is the $\ell_{\infty}$-norm of the vector $\bm{A} \, \bm{v} \in \mb{R}^{m}$.
%(Note that, when $\bm A$ is the incidence matrix of a hypergraph $H$, then $\disc(\bm A)$ is precisely $\disc(H)$.)
Their main result concerns a collection of random matrix models in which the columns are generated independently.
In particular, their results apply to the sparse regime ($m \ll n$) of both the random hypergraph models we have discussed. Specifically, they show that if $n = \Omega( m^{3} \log^{3} m)$, then \whp\ $\disc(H) \le 2$,
provided $H$ is drawn from $\mb{H}(n,m,p)$ or $\scr{H}(n,m,d)$ for $p =d/m$.
Finally, in a very recent work, Turner et al.~\cite{Turner2020} considered the problem of
vector balancing when the entries of the random matrix $\bm{A}$ are each distributed
as standard Gaussian random variables which are independent and identically distributed (\iid).
Amongst other results, they showed that the discrepancy of $\bm{A}$ is $\Theta(2^{-n/m}  \sqrt{n})$ \whp,
provided $m \ll n$.

% Observe that in either model, $E$ is technically a multi-set, as an edge can be constructed multiple times.
% That being said, duplicates of the same edge will not affect the discrepancy of $H$ drawn from
% $\mb{H}(n,m,p)$ or $\scr{H}(n,m,d)$.

\subsection{An Overview of Our Results}

All results proven in this paper are asymptotic with
respect to the parameter $n$, the number of vertices of the model. 
Thus, we hereby assume that $m=m(n)$, $p =p(n)$ and $d=d(n)$ are functions which depend on $n$, with $p =d/m$.

While previous results have successfully matched (or improved upon) the conjectured Beck-Fiala bound of $\sqrt{d}$  in
the random hypergraph setting, they either apply to a restricted parameter range~\cite{Ezra_2015, Potukuchi_2018, Hoberg_2019, Cole2020},
or do not provide asymptotically tight results for the full parameter range~\cite{Ezra_2015, Bansal_2018}. In particular,
when $n /\log n \ll m \ll n$ or $m \gg n$, the correct order of the discrepancy is unknown in either model.
In this paper, we obtain (almost) matching upper and lower bounds in the dense regime of $m \gg n$. Moreover,
our upper bounds are algorithmic. In the sparse regime of $m \ll n$, we provide the first lower bounds which apply
to the full parameter range under the mild assumption that both the average edge size $dn/m = pn$
and degree $d = pm$ tend to infinity with $n$. Proving the existence of a colouring whose discrepancy matches our lower bound in the regime $n/\log n \ll m \ll n$
remains open. This problem is particularly challenging from an algorithmic perspective, as the partial colouring
lemma~\cite{Lovett_2012} does not appear to be useful in this range of parameters, and this is the main tool used in the literature.

We now formally state our main results:

\begin{theorem} \label{thm:edge_independent_lower_bound} 
Suppose that $H$ is generated from $\mb{H}(n,m,p)$ with $pn\rightarrow \infty$, $pm \rightarrow \infty$ and $p$ bounded away from $1$. If $m = O(n)$, then \whp
\[
    \disc(H) = \Omega\left(\max\{2^{-n/m} \sqrt{pn}, 1\} \right),
\]
Moreover, if $m \gg n$, then \whp
\[
    \disc(H) = \Omega\left(\sqrt{pn\log\gamma} \right),
\]
where $\gamma:= \min\{m/n, pn\}$.
\end{theorem}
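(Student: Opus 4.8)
The plan is to run a first-moment (union bound) argument over all $2^n$ colourings $\psi\in\{-1,1\}^n$, showing that for the claimed threshold $t$ we have $\Pr[\exists\,\psi:\disc(\psi)\le t]=o(1)$. Since the edges $e_1,\dots,e_m$ are sampled independently in $\mb H(n,m,p)$, for a fixed $\psi$ we get $\Pr[\disc(\psi)\le t]=\prod_{i=1}^m\Pr[\,|\psi(e_i)|\le t\,]$, so the whole problem reduces to estimating the single-edge quantity $\Pr[\,|\psi(e_i)|\le t\,]$. Writing $k:=|\psi^{-1}(1)|$ and using that $|\psi(e)|$ is invariant under flipping every colour, I may assume $k\ge n/2$, in which case $\psi(e_i)=X-Y$ with $X\sim\Bin(k,p)$ and $Y\sim\Bin(n-k,p)$ independent, $\mathbb{E}\psi(e_i)=p(2k-n)\ge0$ and $\var\psi(e_i)=p(1-p)n$. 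The hypotheses $pn\to\infty$ and $p$ bounded away from $1$ make each of $X$, $Y$, and $\psi(e_i)$ a well-spread sum of variance $\Theta(pn)$.

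For the regime $m=O(n)$ a crude anti-concentration estimate is enough. The standard point-mass bound for binomials gives $\max_z\Pr[\psi(e_i)=z]\le\max_z\Pr[X=z]=O\!\bigl(1/\sqrt{kp(1-p)}\bigr)=O(1/\sqrt{pn})$ (here $k\ge n/2$ and $p$ is bounded from $1$), and this bound is uniform over all colourings $\psi$; hence $\Pr[\,|\psi(e_i)|\le t\,]=O\!\bigl(\max\{t,1\}/\sqrt{pn}\bigr)$. Multiplying over the $m$ independent edges and union bounding over the $2^n$ colourings gives $\Pr[\exists\,\psi:\disc(\psi)\le t]\le 2^n\bigl(O(t/\sqrt{pn})\bigr)^m$, which, with $t=c\,2^{-n/m}\sqrt{pn}$, equals $(\kappa c)^m$ for an absolute constant $\kappa$ and is therefore $o(1)$ once $c<1/\kappa$ (using that $pm\to\infty$ forces $m\to\infty$). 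This proves $\disc(H)=\Omega(2^{-n/m}\sqrt{pn})$ \whp; and if $2^{-n/m}\sqrt{pn}<1$ the claimed bound is just $\Omega(1)$, which holds because \whp\ at least one edge has odd size (as $\Pr[\,|e_i|\text{ odd}\,]\to1/2$ whenever $pn\to\infty$), forcing $\disc(H)\ge1$.

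For the dense regime $m\gg n$ the point-mass bound only yields $\Omega(\sqrt{pn})$, and to recover the extra $\sqrt{\log\gamma}$ factor I would need a sharper lower bound on the single-edge tail, namely
\[\Pr[\,|\psi(e_i)|>t\,]\ \ge\ c\exp\!\bigl(-C t^2/(pn)\bigr)\qquad\text{for all }1\le t\le pn/8,\]
with absolute constants $c,C>0$, valid uniformly over all colourings. This should follow from a reverse Chernoff / moderate-deviation estimate: for $s\le Np/2$ one has $\Pr[\Bin(N,p)\ge Np+s]\ge c\exp(-Cs^2/(Np))$, obtained by summing the Stirling point-mass estimates over an interval of length $\Theta(\sqrt{Np})$ and using that the relative-entropy rate function of the binomial stays within a constant factor of $s^2/(2Np(1-p))$ on this range. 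To move from $X$ (or $Y$) to $\psi(e_i)=X-Y$ I would split on the mean: if $p(2k-n)\ge 2t$ then $\psi(e_i)$ concentrates above $t$ and $\Pr[\psi(e_i)>t]=\Omega(1)$; otherwise $k$ lies within $n/8$ of $n/2$, so $kp$ and $(n-k)p$ both tend to infinity, and $\{X>\mathbb{E}X+t\}\cap\{Y\le\mathbb{E}Y\}$ already forces $\psi(e_i)>p(2k-n)+t\ge t$, giving $\Pr[\psi(e_i)>t]\ge\Pr[X>\mathbb{E}X+t]\,\Pr[Y\le\mathbb{E}Y]=\Omega\!\bigl(e^{-2Ct^2/(pn)}\bigr)$.

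Granting the displayed bound, for each $\psi$ we get $\Pr[\disc(\psi)\le t]\le\bigl(1-ce^{-Ct^2/(pn)}\bigr)^m\le\exp\!\bigl(-c\,m\,e^{-Ct^2/(pn)}\bigr)$, and a union bound over the $2^n$ colourings is $o(1)$ as soon as $m\,e^{-Ct^2/(pn)}\gg n$, i.e. as soon as $t\le c'\sqrt{pn\log(m/n)}$ for a small enough $c'>0$; together with the admissibility constraint $t\le pn/8$ this gives $\disc(H)=\Omega\!\bigl(\min\{\sqrt{pn\log(m/n)},\,pn\}\bigr)$ \whp, which is $\Omega(\sqrt{pn\log\gamma})$ since $\gamma\le m/n$ and $\log\gamma\le\log(pn)\le pn$. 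The hard part is the third paragraph: establishing the reverse-Chernoff lower bound with an explicit valid range of $t$, and, more delicately, doing so with a single estimate uniform over all colourings $\psi$ (the unbalanced ones, where $\psi(e_i)$ has a large mean, are actually the easy case). The cap $t\lesssim pn$ — equivalently the $pn$ inside the $\min$ defining $\gamma$ — is precisely what keeps $t$ in the range where the binomial rate function is comparable to its quadratic (Gaussian) approximation, so it is essential rather than an artefact.
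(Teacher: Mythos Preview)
Your proposal is correct and follows the same first-moment skeleton as the paper: bound $\mathbb{E}[Z]\le 2^n\cdot q^m$ where $q=\max_\psi\Pr[\,|\psi(e_1)|\le t\,]$, and show this is $o(1)$; handle the residual $\Omega(1)$ via the odd-edge observation. The only substantive difference is the tool used to control the single-edge probability $q$.

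For $m=O(n)$ the two arguments are essentially identical: your point-mass bound $O(t/\sqrt{pn})$ is exactly what the paper gets from the crude Berry--Esseen inequality (Lemma~\ref{lem:low_discrepancy_probability}, second line).

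For $m\gg n$ the paper avoids your reverse-Chernoff route entirely. Instead of lower-bounding the tail $\Pr[\,|\psi(e_1)|>t\,]$ (which, as you correctly flag, requires a case split on whether $\psi$ is nearly balanced and then a binomial lower-tail estimate), it upper-bounds $\Pr[\,|\psi(e_1)|\le t\,]$ directly via Berry--Esseen. The point is that $\psi(e_1)$ has variance $\sigma^2=p(1-p)n$ \emph{regardless} of $k=|\psi^{-1}(1)|$, so comparison to a Gaussian gives, uniformly over all $\psi$,
\[
\Pr[\,|\psi(e_1)|\le t\,]\ \le\ \frac{c_{uni}}{\sigma}+\sqrt{1-\exp\!\bigl(-2t^2/(\pi\sigma^2)\bigr)}.
\]
The additive Berry--Esseen error $c_{uni}/\sigma=O(1/\sqrt{pn})$ is precisely what forces the cap $\gamma\le pn$, playing the same role as your validity constraint $t\lesssim pn$ for the reverse Chernoff. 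This buys for free exactly the thing you single out as delicate---uniformity over all colourings without casework on $k$. Your approach is more elementary (no Berry--Esseen needed) and does go through, but the case analysis on balanced versus unbalanced $\psi$ is genuine extra work that the paper's Gaussian comparison sidesteps.
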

\begin{remark}
This bound complements the upper bound of $1$ in~\cite{Altschuler2021} for $m \leq Cn/\log n$ where $C = 2 \log 2$, but also implies that
if $\eps >0$ is a constant, then $H$ has non-constant discrepancy for $m \ge (C + \eps) n/\log (np)$. Thus, $H$ exhibits a sharp phase transition at $Cn/\log(n p)$
for constant $p$.
\end{remark}
We obtain analogous results regarding the edge-dependent model $\scr{H}(n,m,d)$:
\begin{theorem}\label{thm:edge_dependent_lower_bound}
Suppose that $H$ is generated from $\scr{H}(n,m,d)$ with $dn/m \rightarrow \infty$, $d \rightarrow \infty$, and $d/m$ bounded away from $1$. If $m= O(n)$, then \whp
\[
    \disc(H) = \Omega\left(\max\{2^{-n/m} \sqrt{\frac{dn}{m}}, 1\} \right),
\]
Moreover, if $m \gg n$, then \whp
\[
\disc(H) = \Omega\left(\sqrt{\frac{dn}{m} \log\gamma} \right),
\]
where $\gamma:= \min\{m/n, dn/m\}$.
\end{theorem}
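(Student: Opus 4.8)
The plan is to follow the proof of Theorem~\ref{thm:edge_independent_lower_bound} --- a first--moment bound over $2$--colourings --- inserting the one extra ingredient needed to cope with the sole structural difference between the models. Write $p=d/m$. For a colouring $\psi$ with $k$ vertices coloured $+1$ and a fixed edge $e_i$, we have $\psi(e_i)=\sum_{v\in V}\psi(v)\,\mathbf{1}[i\in I_v]$, and for fixed $i$ the indicators $\mathbf{1}[i\in I_v]$, $v\in V$, are independent with $\Pr[i\in I_v]=d/m=p$; hence $\psi(e_i)$ has exactly the law it has under $\mb{H}(n,m,p)$, namely $\Bin(k,p)-\Bin(n-k,p)$ with the two terms independent, so of mean $p(2k-n)$ and variance $\asymp pn$. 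Thus every single--edge estimate used for Theorem~\ref{thm:edge_independent_lower_bound} is available unchanged; what must be re-examined is $\Pr[\disc(\psi)\le t]=\Pr[\,|\psi(e_i)|\le t\text{ for all }i\in[m]\,]$, which no longer factorises over $i$ because the columns of $\bm{A}$ each sum to exactly $d$, so the rows of $\bm{A}$ are not independent.

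I would first record the two single--edge facts, valid since $pn,pm\to\infty$ and $p$ is bounded away from $1$: (i) by the Kolmogorov--Rogozin concentration inequality (equivalently a local central limit theorem for differences of binomials), $\Pr[\,|\psi(e_i)|\le t\,]=O(t/\sqrt{pn})$ for all $t\ge1$, uniformly over $\psi$; and (ii) for nearly balanced $\psi$ and $1\le t=O((pn)^{2/3})$, a moderate--deviation lower bound gives $\Pr[\,|\psi(e_i)|>t\,]\ge e^{-ct^2/(pn)}$. In the edge--independent model the rows are independent, so $\Pr[\disc(\psi)\le t]=\Pr[\,|\psi(e_1)|\le t\,]^{m}$; then (i) and a union bound over the $\le 2^{n}$ colourings force $t=\Theta(2^{-n/m}\sqrt{pn})$ when $m=O(n)$, while (ii) gives $\Pr[\disc(\psi)\le t]\le(1-e^{-ct^2/(pn)})^{m}\le\exp(-\,m\,e^{-ct^2/(pn)})$, which beats $2^{n}$ for $t=\Theta(\sqrt{pn\log\gamma})$, $\gamma=\min\{m/n,pn\}$ --- the cap at $\gamma$ being precisely the range in which (ii) holds and in which $\gamma^{O(c)}\ll m/n$. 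Colourings with $\min\{k,n-k\}$ bounded away from $n/2$ contribute a negligible total because then $|\mathbb{E}\psi(e_i)|>t+\sqrt{pn}$ makes $\Pr[|\psi(e_i)|\le t]$ exponentially small, and $\disc(H)\ge1$ \whp\ (a short Fourier computation shows the parity vector $(|e_i|\bmod 2)_i$ is nonzero \whp, i.e.\ some edge has odd size), which supplies the ``$\max\{\cdot,1\}$''. So it remains only to recover, in $\scr{H}(n,m,d)$, a product--type bound on $\Pr[\disc(\psi)\le t]$ for nearly balanced $\psi$.

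For the dense regime $m\gg n$ this follows from negative association. Split $\psi(e_i)=D_i^{+}-D_i^{-}$ with $D_i^{\pm}=\sum_{v\in\psi^{-1}(\pm1)}\mathbf{1}[i\in I_v]$. Each indicator vector $(\mathbf{1}[i\in I_v])_{i\in[m]}$ --- sampling a $d$--subset of $[m]$ without replacement --- is negatively associated, and the $I_v$ are independent, so the array $(\mathbf{1}[i\in I_v])_{i,v}$ is NA; since $D_i^{+}$ depends only on the block $\{(i,v):v\in\psi^{-1}(1)\}$ and these blocks are disjoint over $i$, the vectors $(D_i^{+})_i$ and $(D_i^{-})_i$ are each NA and mutually independent. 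Conditioning on $(D_i^{-})_i$, each event $\{\psi(e_i)\le t\}=\{D_i^{+}\le t+D_i^{-}\}$ is a non--increasing function of the single coordinate $D_i^{+}$, so NA gives $\Pr[\forall i:\psi(e_i)\le t\mid (D_i^{-})_i]\le\prod_i\Pr[D_i^{+}\le t+D_i^{-}\mid(D_i^{-})_i]$; taking expectations and using NA of $(D_i^{-})_i$ with the monotonicity of $x\mapsto\Pr[D_i^{+}\le t+x]$ yields $\Pr[\disc(\psi)\le t]\le\prod_i\Pr[\psi(e_i)\le t]=\Pr[\psi(e_1)\le t]^{m}$, which combined with (ii) is exactly what the union bound needs.

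The sparse regime $m=O(n)$ is where I expect the main work, and where I would depart from a pure NA argument (NA does not pass through products of two--sided interval events, and for $m=O(n)$ it is the two--sided estimate (i) that is needed). Instead I would expose the sets $I_v$ one edge--coordinate at a time: writing $\mathcal{F}_{i-1}$ for the $\sigma$--algebra generated by $e_1,\dots,e_{i-1}$, the conditional law of $\psi(e_i)$ given $\mathcal{F}_{i-1}$ is that of a sum of $n$ independent $\Ber(q_v)$ terms (weighted by $\pm1$) with $q_v=(d-|I_v\cap[i-1]|)/(m-i+1)$, and a short computation shows that the $\mathcal{F}_{i-1}$--measurable quantity $\sigma_i^{2}:=\sum_v q_v(1-q_v)$ has expectation $\asymp pn$ for every $i<m$ (only the final edge $i=m$ is degenerate) --- this is where the hypotheses $d\to\infty$ and $d/m$ bounded away from $1$ enter. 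By Kolmogorov--Rogozin, $\Pr[\,|\psi(e_i)|\le t\mid\mathcal{F}_{i-1}\,]\le O(t/\sqrt{pn})$ whenever $\sigma_i^{2}\ge\Omega(pn)$; calling $i<m$ \emph{bad} when $\sigma_i^{2}<\Omega(pn)$, a Chernoff bound on the $n$ i.i.d.\ summands gives $\Pr[i\text{ bad}]\le e^{-\Omega(pn)}$, hence $\mathbb{E}[\#\text{bad}]\le m\,e^{-\Omega(pn)}$ and Markov gives $\Pr[\#\text{bad}>\beta m]\le e^{-\Omega(pn)}/\beta$. Taking $\beta=\Theta(m/n)$ and telescoping the conditional bounds over the at least $(1-\beta)m$ good edges gives $\Pr[\disc(\psi)\le t]\le\Pr[\#\text{bad}>\beta m]+(O(t/\sqrt{pn}))^{(1-\beta)m}$ for each nearly balanced $\psi$; the first term does not depend on $\psi$, so it can be pulled out of the union bound, the second survives against $2^{n}$ for $t=\Theta(2^{-n/m}\sqrt{pn})$ because $\beta n/m=O(1)$, and $\Pr[\#\text{bad}>\beta m]\le(n/m)e^{-\Omega(pn)}\to0$ throughout the range where the target $2^{-n/m}\sqrt{pn}$ is non--trivial (there $\sqrt{pn}\ge 2^{n/m}$ forces $n/m=O(\log(pn))$, hence $\log(n/m)=o(pn)$), while in the complementary range the claim is just $\disc(H)\ge1$, already handled. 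Making this conditional exposure rigorous --- the uniform lower bound $\mathbb{E}[\sigma_i^{2}]\asymp pn$ for all $i<m$, and the interchange of the telescoping with the ``few bad edges'' event --- is the crux; the remaining bookkeeping (unbalanced colourings, constants) is routine and parallels the edge--independent case.
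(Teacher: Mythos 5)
Your overall plan — lower bound $\disc(H)$ by a first moment argument over colourings, dealing with the row-dependence by conditioning on a filtration that reveals the matrix one row at a time — is the same plan the paper follows, but the key technical lemma is obtained by a genuinely different route. The paper's proof hinges on a ``typical histories'' proposition (Proposition~\ref{prop:typical_histories}, proved in Section~\ref{sec:typical_histories}): one shows that \emph{all} of the conditional probabilities $P_{i,k}=\mb{P}[v_k\in e_{i+1}\mid\scr F_i]$, for $i\le\lfloor\alpha m\rfloor$ and $k\in[n]$, stay within a constant factor of $p$, using hypergeometric concentration together with a chaining/grid argument over $i$ (since a na\"ive union bound over all $i$ would cost $m e^{-\Theta(d)}$ and would fail for $d$ small). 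The same per-row Berry--Esseen estimate (Lemma~\ref{lem:low_discrepancy_probability}) as in the edge-independent case is then applied, in both the $m=O(n)$ and $m\gg n$ regimes, to the first $\lceil\alpha m\rceil$ rows. You instead split: for $m\gg n$ you use negative association of $(D_i^+)_i$ and $(D_i^-)_i$ to factorise a one-sided event over edges; for $m=O(n)$ you control the aggregate conditional variance $\sigma_i^2=\sum_v q_v(1-q_v)$ (rather than each $P_{i,k}$ individually) and permit a small number of ``bad'' rows handled by Markov. Both ideas are sound and would yield the theorem, and the variance-of-the-row approach is in fact slightly lighter than the paper's: it only needs $pn=dn/m\to\infty$ and $p$ bounded away from $1$ (since $\mb{E}[q_v(1-q_v)]\ge p(1-p)/2$ for every $i\le m-1$, and $q_v(1-q_v)\in[0,1/4]$ gives the Chernoff failure probability $e^{-\Omega(pn)}$), whereas the paper's hypergeometric-grid argument genuinely requires $d\to\infty$.

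Two points need tightening. First, the NA step as written is off: NA gives $\Pr[\bigcap_i\{\psi(e_i)\le t\}]\le\prod_i\Pr[\psi(e_i)\le t]$, but for colourings with $\sum_v\psi(v)<0$ the one-sided probability $\Pr[\psi(e_1)\le t]$ is close to $1$, so the bound is vacuous; and the dismissal of unbalanced $\psi$ because ``$\Pr[|\psi(e_1)|\le t]$ is exponentially small'' only covers $|k-n/2|=\Omega(n)$, not the range $t/p\lesssim|k-n/2|\ll n$ where the single-edge bound does not beat $\binom nk$. The fix is to choose the one-sided event according to the sign of $\mb E[\psi(e_1)]$ (use $\{\psi(e_i)\ge-t\}$ when $\sum_v\psi(v)\le 0$ and $\{\psi(e_i)\le t\}$ otherwise); then every per-edge factor is at most $\Phi(t/\sigma)=1-\Theta(\gamma^{-C})$ and the union bound closes exactly as you want. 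Second, the telescoping ``over the at least $(1-\beta)m$ good edges'' is not literally a product, because which rows are good is random; it can be made rigorous by a change-of-measure/supermartingale trick (e.g.\ $\mb{E}\bigl[\prod_i X_i\bigr]\le 1$ where $X_i:=\mathbf 1_{R_i^\psi}\bigl(q^{-1}\mathbf 1_{G_i}+\mathbf 1_{\neg G_i}\bigr)$ and $G_i$ is the $\scr F_{i-1}$-measurable good event), giving $\Pr[R^\psi_{\le m},\,\#\text{bad}\le\beta m]\le q^{(1-\beta)m}$, but this is more than ``routine bookkeeping'' and should be spelled out. With those two repairs, the argument is correct and is a valid alternative to the paper's proof.
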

\begin{remark} \label{remark:matching_lower_bound}
The techniques used in~\cite{Altschuler2021} do not seem to apply to the edge dependent model.
Thus, if $m = O(n^{1/3}/\log n)$, then~\cite{Cole2020} implies $\disc(H) \le 2$, however
when $n^{1/3}/\log n \ll m \ll n$, $O(\sqrt{d})$ remains the best known upper bound for $\disc(H)$~\cite{Bansal_2018}.
\end{remark}
% %
% Observe that in Theorems~\ref{thm:edge_independent_lower_bound} and~\ref{thm:edge_dependent_lower_bound},
% if $m \gg n$ and $d = pm  \ge (m/n)^{1+\eps}$ for some constant $\eps > 0$, then
% $\log (dn/m) = \Omega(\log(m/n))$, so the lower bound is of the order $\Omega\left(\sqrt{\frac{dn}{m} \log(m/n)} \right)$.
% %

Finally, we prove an upper bound in the dense regime of $m \gg n$ which holds
for both models:
\begin{theorem} \label{thm:dense_setting}
Assume that $H$ is drawn from $\scr{H}(n,m,d)$ or $\mb{H}(n,m,p)$ with $p=d/m$, and pick any
$\beta=\beta(n)\ge 1$ satisfying\footnote{%
Let us remark at this place that by $\log$ we always mean the natural logarithm. 
In the proof of this theorem it is convenient to use $\lg$ to denote logarithm of base $2$. 
}
\begin{equation}\label{eqn:beta_asymptotics}
    \beta \frac{dn}{m} \ge \log(m/n)  \left(\log\left(\frac{d  n}{m} \right) + 2 \right)^{5}.
  \end{equation}
If $m \gg n$, and $d n/m \rightarrow \infty$, then 
\whp
\[
  \disc(H) = O\left( \sqrt{\frac{d n}{m}\, \log\left(\frac{m}{n} \right)\,\beta}  \right).
\]
Moreover, whenever this holds, we can find a colouring $\psi$ of $H$ with such a discrepancy
% $\disc(\psi) = O\left( \sqrt{\frac{d n}{m}} \log\left(\frac{m}{n} \right) \right)$
in expected polynomial time. % in expectation.
\end{theorem}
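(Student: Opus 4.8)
The plan is to derive Theorem~\ref{thm:dense_setting} from the partial colouring lemma of Lovett and Meka~\cite{Lovett_2012} by an iterative rounding scheme. At each stage we hold a fractional colouring $x \in [-1,1]^V$ with a large set $T$ of ``active'' coordinates (those not yet rounded to $\pm 1$), and we apply the partial colouring lemma to the subsystem induced by $T$ to round at least half of the active coordinates while controlling the incurred discrepancy on every edge. Summing the per-round discrepancy increments over the $O(\log n)$ rounds yields the bound in the statement. The key point is that after $t$ rounds, the active set has size $|T_t| \le n/2^t$, so the ``effective'' number of vertices shrinks geometrically; the per-round discrepancy one can afford from the partial colouring lemma, when applied to a random hypergraph restricted to $T_t$ vertices, improves accordingly, and the geometric series converges to something of order $\sqrt{(dn/m)\log(m/n)\beta}$ rather than the naive $\sqrt{dn/m}\cdot\log n$.

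\textbf{Step 1: the per-round estimate.} First I would fix a round with active set $T$, $|T| = k$, and consider the restriction $H[T]$ of the random hypergraph to $T$. The partial colouring lemma says: if one assigns to each edge $e$ a ``width'' $b_e > 0$ so that $\sum_{e} \exp(-b_e^2/(16\,|e \cap T|)) \le k/16$ (the standard Lovett--Meka bandwidth condition, with a concrete small constant), then there is a partial colouring rounding $\ge k/2$ coordinates with discrepancy $\le b_e$ on edge $e$. For the random models we need a concentration statement: \whp\ (over the draw of $H$, uniformly over all not-too-small subsets $T$) the typical edge has $|e\cap T| = \Theta(p|T|) = \Theta(dk/m)$, and the number of edges with $|e\cap T|$ much larger than this is exponentially small. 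This is a Chernoff bound on $|e\cap T| \sim \Bin(k,p)$ for the edge-independent model, and a hypergeometric tail for the edge-dependent model; a union bound over $e \in [m]$ and over the (at most $2^n$) choices of $T$ is affordable because the exponent scales with $k \ge$ (something growing). Plugging the resulting bound on the $|e\cap T|$ into the bandwidth condition, one can take $b_e = b(k)$ essentially constant across edges, of order $\sqrt{(dk/m)\log(m/k)}$ (the $\log(m/k)$ absorbing the $\log$ of the number of edges in the exponential sum), possibly with an extra polylog-in-$\log$ slack that is exactly what the factor $\beta$ and the condition~\eqref{eqn:beta_asymptotics} are there to accommodate.

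\textbf{Step 2: summing the rounds and running time.} I would then iterate: set $T_0 = V$, and as long as $|T_t| \ge n_0$ for a suitable cutoff $n_0$ (chosen so that the subsystem is still ``dense enough'', e.g. $n_0$ polylogarithmic, where $m \gg n_0$ comfortably), apply Step~1 to get $T_{t+1}$ with $|T_{t+1}| \le |T_t|/2$ and discrepancy increment $\le b(|T_t|)$ on every edge. Once $|T_t| < n_0$, finish by colouring the remaining coordinates arbitrarily, incurring an extra $O(n_0)$ additive term, which is lower-order. The total discrepancy is at most $\sum_{t\ge 0} b(n/2^t) = O\!\big(\sum_{t} \sqrt{(dn/(m 2^t))\,\log(m 2^t/n)\,\beta}\big)$; since $\sqrt{2^{-t}\,(\log(m/n)+t)}$ is summable and the $t=0$ term dominates, this is $O(\sqrt{(dn/m)\log(m/n)\beta})$, as required. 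The algorithmic claim follows because the Lovett--Meka partial colouring can be found in randomized polynomial time (as in~\cite{Bansal_2018, Lovett_2012}), and there are only $O(\log n)$ rounds, so the expected running time is polynomial.

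\textbf{The main obstacle.} The hard part will be Step~1: making the union bound over all active sets $T$ rigorous while keeping the per-round bandwidth $b(k)$ as small as $\sqrt{(dk/m)\log(m/k)\,\beta}$. The delicate point is that the Lovett--Meka bandwidth condition is an \emph{exponential} sum over edges, and it is sensitive to the upper tail of the edge sizes $|e\cap T|$; one needs the \whp\ structural statement to say not just that a typical edge is small, but that the count of edges of each size $s$ decays fast enough in $s$ that $\sum_e \exp(-b^2/(16 s))$ stays below $k/16$. This requires choosing the threshold for ``large $T$'' and the constant in $b(k)$ carefully, and it is precisely here that the somewhat unusual hypothesis~\eqref{eqn:beta_asymptotics}, with its $(\log(dn/m)+2)^5$ factor, enters — it gives exactly the slack needed so that the exponential tail sum is controlled uniformly over all rounds and all large active sets. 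The edge-dependent model needs a little extra care because the rows of $\bm A$ are not independent, but since the \emph{columns} are independent and each has exactly $d$ ones, $|e\cap T|$ is a sum of independent indicators across $v\in T$ and the same Chernoff/union-bound machinery goes through with $p = d/m$.
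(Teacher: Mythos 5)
Your overall skeleton — iterate Lovett--Meka, halving the active set each round and paying a per-round discrepancy budget that shrinks with the active set so the total forms a convergent series — is exactly the paper's plan, and you correctly identify the bandwidth condition as the heart of the matter. But two concrete steps as you have written them would fail.

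The ``union bound over $e\in[m]$ and over the (at most $2^n$) choices of $T$'' does not close. For the event that \emph{many} (say $\ge k/C$) edges have $|e\cap T|>s$ when $|T|=k$, the best exponent one can extract from a Chernoff/hypergeometric tail is of order $k\,s\,\log\!\big(sm/(dk)\big)$. At the end of phase one, $k\approx m/d$ and $s$ is forced by~\eqref{eqn:beta_asymptotics} to be as small as $\Theta\big(\mu/(\log\mu)^{5}\big)$ with $\mu=dn/m$, so this exponent is $\Theta\big(n/(\log\mu)^{4}\big)$, which is asymptotically \emph{smaller} than $n$. Hence $2^n$ times that tail does not go to zero; the union bound must be over the $\binom{n}{k}\binom{m}{k/C}$ submatrices of the actual round-size, and the exponent must be compared against $k\log(n/k)$, not $n$. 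The paper goes further and avoids a union over rounds entirely: if any phase-one round is bad, then by monotonicity (the size threshold $s_i$ decreases with $i$ and so does the active-set size $n_i$) one can always extract an $(n_{t_1}/17)\times n_{t_1}$ submatrix of $\bm A$ with all rows having $\ge s_{t_1}$ ones, where $n_{t_1}=m/d$ and $s_{t_1}$ are the \emph{final} phase-one parameters, and a single application of the Bansal--Meka submatrix lemma handles this. That reduction, together with the choice of per-round target $\hat f_i=\hat f/(i+2)^2$ rather than your $b(k)$, is precisely where~\eqref{eqn:beta_asymptotics} enters, so you should not expect the $2^n$ shortcut to be recoverable without that idea.

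Your stopping rule is also not quite right: stopping at a fixed polylogarithmic $n_0$ and colouring the remainder arbitrarily incurs an additive $O(n_0)$, and $\hat f=\sqrt{(dn/m)\log(m/n)\beta}$ need not dominate $\log^2 n$ when $dn/m$ grows slowly. The paper instead switches, after $t_1=\lg(dn/m)$ rounds, to a second phase where the partial colouring lemma is applied with all $\lambda_e=0$: this is legal because by then the surviving restricted hypergraph \whp\ has at most $n_i/16$ edges of size $>\hat f$, and it lets the active set shrink all the way down to $\hat f/20$ vertices, at which point the arbitrary completion costs only $O(\hat f)$.
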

\begin{remark}
Observe that the smaller we are able to take $\beta$, the better upper bound we get.
In particular, if $\beta:= \log(m/n)$, then \eqref{eqn:beta_asymptotics} and $\beta(n)\ge 1$ are satisfied (for large enough $n$).
Therefore, at worst the upper bound is $O\left( \sqrt{\frac{d n}{m}} \log\left(\frac{m}{n} \right) \right)$, which is significantly
smaller than the upper bound of $O(\sqrt{d})$ of~\cite{Bansal_2018}, as $m \gg n$.
\end{remark}

% \begin{corollary} \label{cor:dense_setting}
% Assume that $H$ is drawn from $\scr{H}(n,m,d)$ or $\mb{H}(n,m,p)$ with $p=d/m$.
% If $m \gg n$, and $d n/m \rightarrow \infty$, then 
% \whp
% \[
%     \disc(H) = O\left( \sqrt{\frac{d n}{m}} \log\left(\frac{m}{n} \right) \right).
% \]
% Moreover, whenever this holds, we can find a colouring $\psi$ of $H$ with such a discrepancy
% % $\disc(\psi) = O\left( \sqrt{\frac{d n}{m}} \log\left(\frac{m}{n} \right) \right)$
% in expected polynomial time. % in expectation.
% \end{corollary}

Theorem~\ref{thm:dense_setting} provides asymptotically matching bounds for the lower bounds of Theorems~\ref{thm:edge_independent_lower_bound} and~\ref{thm:edge_dependent_lower_bound}  in 
a broad range of the dense regime $m \gg n$.
For instance, this happens when $d = p  m  \ge (m/n)^{1+\eps}$, where $\eps >0 $ is a constant, since in that case $\beta:=1$ clearly satisfies~\eqref{eqn:beta_asymptotics},
and $\log (dn/m) = \Omega(\log(m/n))$, so $\gamma = \Theta(\log(m/n))$.

\begin{corollary} \label{cor:matching}
Assume that $H$ is drawn from $\scr{H}(n,m,d)$ or $\mb{H}(n,m,p)$, where $p=d/m$.
If $m \gg n$ and $d = pm  \ge (m/n)^{1+\eps}$ for constant $\eps>0$, then 
\whp
\[
    \disc(H) = \Theta\left( \sqrt{\frac{d n}{m}  \log\left(\frac{m}{n} \right) } \right).
\]
\end{corollary}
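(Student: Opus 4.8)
The plan is to deduce Corollary~\ref{cor:matching} by combining the lower bounds of Theorems~\ref{thm:edge_independent_lower_bound} and~\ref{thm:edge_dependent_lower_bound} with the upper bound of Theorem~\ref{thm:dense_setting}, checking that under the hypotheses $m\gg n$ and $d=pm\ge (m/n)^{1+\eps}$ the two bounds meet asymptotically. First I would verify the hypotheses of the three theorems are met: we are in the dense regime $m\gg n$, and since $d\ge (m/n)^{1+\eps}\to\infty$ and $dn/m\ge (m/n)^{\eps}\to\infty$, both the ``$d\to\infty$, $dn/m\to\infty$'' conditions (and likewise $pn\to\infty$, $pm\to\infty$) hold; the parameter $p=d/m\le (m/n)^{-\eps}\cdot (m/n)=\ldots$ wait, rather $p=d/m$ and we need $p$ bounded away from $1$, which follows because $dn/m\to\infty$ forces $m/n\to\infty$, hence... actually one should just note $d\le m$ is assumed in the model and $d/m$ bounded away from $1$ is part of what we may take; in any case $d=pm\ge(m/n)^{1+\eps}$ together with $d\le m$ gives $p=d/m\in[(m/n)^{-1+\eps},1)$ and since we additionally want it bounded away from $1$ we may simply restrict to that regime (the statement as used requires it).

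Next I would evaluate $\gamma=\min\{m/n,\,dn/m\}$ appearing in the lower bounds. The key observation is that $d\ge (m/n)^{1+\eps}$ yields $dn/m\ge (m/n)^{\eps}$, so $\log(dn/m)\ge \eps\log(m/n)=\Omega(\log(m/n))$; on the other hand $\log(dn/m)\le \log d\le \log m$, and since $m\gg n$ we have $\log(m/n)\to\infty$ while trivially $\log(dn/m)=O(\log(m/n))$ is \emph{not} immediate, but we only need a two-sided comparison of $\log\gamma$ with $\log(m/n)$. Indeed $\gamma\le m/n$ gives $\log\gamma\le\log(m/n)$, and $\gamma\ge\min\{m/n,(m/n)^{\eps}\}=(m/n)^{\eps}$ (for $\eps\le1$; for $\eps>1$ replace $\eps$ by $1$) gives $\log\gamma\ge\eps\log(m/n)$. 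Hence $\log\gamma=\Theta(\log(m/n))$, so Theorem~\ref{thm:edge_independent_lower_bound} (resp.~\ref{thm:edge_dependent_lower_bound}) gives $\disc(H)=\Omega(\sqrt{pn\log\gamma})=\Omega(\sqrt{(dn/m)\log(m/n)})$.

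For the matching upper bound I would invoke Theorem~\ref{thm:dense_setting} with the choice $\beta:=1$. I must check that~\eqref{eqn:beta_asymptotics} holds with this choice, i.e.\ that $dn/m\ge \log(m/n)\,(\log(dn/m)+2)^5$. Using $dn/m\ge (m/n)^{\eps}$ and writing $t:=\log(m/n)\to\infty$, the right-hand side is $t\cdot(\Theta(t)+2)^5=\Theta(t^6)$ while the left-hand side is at least $e^{\eps t}$, which dominates any polynomial in $t$ for large $n$; so~\eqref{eqn:beta_asymptotics} is satisfied eventually, and $\beta=1\ge1$. Theorem~\ref{thm:dense_setting} then yields $\disc(H)=O(\sqrt{(dn/m)\log(m/n)})$ \whp. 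Combining the $\Omega$ and $O$ bounds gives $\disc(H)=\Theta(\sqrt{(dn/m)\log(m/n)})$ \whp, which is the claimed statement.

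This argument is essentially a bookkeeping exercise: there is no real obstacle beyond the careful verification that the asymptotic side conditions of the three cited theorems are all implied by $d\ge(m/n)^{1+\eps}$ and $m\gg n$. The one point deserving slight care is the estimate $\log\gamma=\Theta(\log(m/n))$, since $\gamma$ is a minimum of two quantities and one must confirm that the hypothesis $d\ge(m/n)^{1+\eps}$ is exactly what prevents the $dn/m$ branch from being too small; and the verification that $\beta=1$ fulfils~\eqref{eqn:beta_asymptotics}, which is where the strict exponent $1+\eps$ (rather than merely $d\gg m/n$) is used. Everything else is immediate from the quoted results.
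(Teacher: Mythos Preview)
Your approach is exactly the paper's: combine the lower bounds of Theorems~\ref{thm:edge_independent_lower_bound}/\ref{thm:edge_dependent_lower_bound} with Theorem~\ref{thm:dense_setting} at $\beta=1$, using $d\ge(m/n)^{1+\eps}$ to force $\log\gamma=\Theta(\log(m/n))$ and to verify~\eqref{eqn:beta_asymptotics}. One small slip in your check of~\eqref{eqn:beta_asymptotics}: you write the right-hand side as $t\cdot(\Theta(t)+2)^5$ with $t=\log(m/n)$, but $\log(dn/m)$ need not be $O(t)$ (nothing prevents $dn/m$ from being as large as $n$). The clean fix is to set $y:=dn/m$ and use $\log(m/n)\le(\log y)/\eps$, so the right-hand side is $O((\log y)^6)$ while the left-hand side is $y\to\infty$; this is what the paper means by ``$\beta:=1$ clearly satisfies~\eqref{eqn:beta_asymptotics}''.
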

\begin{remark}
This shows that in the dense regime, 
the main parameter of interest describing the discrepancy of each random hypergraph model is the average edge
size, $dn/m$, opposed to $d$, the average/maximum degree (depending on the model).
\end{remark}

%
% \begin{remark} \label{remark:simple}
% A hypergraph is called {\bf simple} if it has no duplicate edges (i.e.~all the rows of its incidence matrix are different). It is natural to consider the probability spaces $\scr{H}(n,m,d)$ and $\mb{H}(n,m,p)$ conditional upon the event that we obtain a simple hypergraph, which we call $\widehat{\scr{H}}(n,m,d)$ and $\widehat{\mb{H}}(n,m,p)$, respectively. For instance, $\widehat{\scr{H}}(n,m,d)$ is the uniform distribution on simple $d$-regular hypergraphs with $n$ vertices and $m$ edges.
% Note that, for $H$ drawn from either $\scr{H}(n,m,d)$ or $\mb{H}(n,m,p)$ with $p=d/m$, the expected number of pairs of duplicate edges is $O(m^2 (p^2 + (1-p)^2)^n)=O(m^2 e^{-2p(1-p)n})$. Therefore, $H$ is simple \whp\ provided $p(1-p)n - \log m \rightarrow \infty$. In particular, all the bounds we prove on $\disc(H)$ continue to hold for $\widehat{\scr{H}}(n,m,d)$ and $\widehat{\mb{H}}(n,m,p)$ in that range.
% \end{remark}

\section{Preliminaries} \label{sec:prelim}

\sv{\toappendix{\section{Additions to Section~\ref{sec:prelim}}\label{app:prelim}}}

% Before we state our asymptotic results involving $\mb{H}(n,m,p)$ and $\scr{H}(n,m,d)$, we first briefly review some necessary Landau notation.
% Given two functions $f=f(n)$ and $g=g(n)$, we write $f=O(g)$ if there exists an absolute constant $\alpha$ such that $f
% \leq \alpha  g$ for all $n$. Similarly, we write $f=\Omega(g)$ if $g=O(f)$ and $f=\Theta(g)$, provided both $f=O(g)$ and $f=\Omega(g)$.
% If the limit $\lim_{n\to\infty} f/g=0$, then we write $f=o(g)$ or $f\ll g$. If the limit of $f/g$ exists and tends to $1$,
% we say that $f$ and $g$ are \textbf{asymptotically equal}, and write $f\sim g$ or $f=(1+o(1))  g$. Finally, we say that $f$ is \textbf{bounded away} from $g$,
% provided $|f-g| = \Omega(1)$.

In this section, we first provide some central-limit-type results for sums of independent random variables, which will be used in Section~\ref{sec:lower} to obtain lower bounds on the discrepancy.
In the sequel, we write $N(0,1)$ to denote a generic random variable distributed as a standard Gaussian with cumulative distribution function
%($c.d.f.$) only place we use cdf, add it back if you need it somewhere else
\[
    \Phi(x) := \pr[N(0,1)\le x] = \frac{1}{\sqrt{2  \pi}} \int_{- \infty}^{x} \exp(-t^{2}/2) \, dt
    \qquad\text{for each }x\in\mb{R}.
\]
The Berry-Esseen Theorem (see section~XVI.5 in~\cite{Feller-II}), which we state below for convenience, yields a quantitative form of the Central Limit Theorem for sums of independent random variables with finite third moment. 
\begin{theorem}[Berry-Esseen]\label{thm:berry_esseen}
  The following holds for some universal constant\sv{\footnote{%
There has been a series of works improving
upon the constant $c_{uni}$, the latest of which by Shevtsova~\cite{Shevtsova2010} shows
that $c_{uni}/2 \le 0.560$ in our setting. However, since we are only concerned with the asymptotic growth of discrepancy, the precise value of $c_{uni}$ is not important.}} $c_{uni} > 0$.
  Let $Y_{1},\ldots, Y_{n}$ be independent random variables with $\mb{E}[Y_{i}]=0$, $\mb{E}[Y^2_{i}]=\sigma^2_i$ and  $\mb{E}[|Y_{i}|^3]=\rho_i<\infty$ for all $i\in[n]$.
Consider the sum $Y = \sum_{i=1}^{n} Y_{i}$, with standard deviation $\sigma = \sqrt{\sum_{i=1}^{n} \sigma_i^2}$, and let $\rho = \sum_{i=1}^{n} \rho_{i}$.
Assume $\sigma>0$. Then,
\[
    \sup_{x\in\mb{R}} | \mb{P}[ Y/\sigma \le x] - \Phi(x)| \le  \frac{(c_{uni}/2)\rho}{\sigma^3}.
\]
\end{theorem}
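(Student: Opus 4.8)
The statement is the classical Berry--Esseen theorem, which the paper merely quotes; so the plan is to reproduce Esseen's Fourier-analytic argument (as in~\cite{Feller-II}), since it is the route that naturally produces a single universal constant. After rescaling, set $Z = Y/\sigma$, so that $\mb{E}[Z]=0$, $\mb{E}[Z^2]=1$, and $Z$ has characteristic function $\varphi(t) = \prod_{j=1}^{n} \varphi_j(t/\sigma)$ where $\varphi_j(s) = \mb{E}[e^{isY_j}]$; let $F$ be the distribution function of $Z$.

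\textbf{Step 1 (Smoothing inequality).} The first ingredient is Esseen's smoothing lemma: there is an absolute constant $c_0$ such that for every $T>0$,
\[
\sup_{x\in\mb{R}} |F(x) - \Phi(x)| \le \frac{1}{\pi}\int_{-T}^{T} \left|\frac{\varphi(t) - e^{-t^2/2}}{t}\right|\, dt \;+\; \frac{c_0}{T}.
\]
This is obtained by convolving $F-\Phi$ with a fixed kernel whose Fourier transform is supported on $[-T,T]$ (e.g.\ the Fej\'er kernel), using that $\Phi'$ is bounded, and then Fourier-inverting; it reduces everything to estimating the characteristic function.

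\textbf{Step 2 (Expansion of the factors).} For each $j$, a third-order Taylor expansion with integral remainder together with $\mb{E}[Y_j]=0$ and $\mb{E}[Y_j^2]=\sigma_j^2$ gives $\left|\varphi_j(s) - \left(1 - \tfrac{1}{2}\sigma_j^2 s^2\right)\right| \le \tfrac{1}{6}\rho_j|s|^3$. Using Jensen's inequality in the form $\sigma_j^2 \le \rho_j^{2/3}$, one checks that for $|t|$ below a threshold $T_0 = c_1\,\sigma^3/\rho$ the Taylor remainder is dominated by the quadratic term at every factor simultaneously, so that $|\varphi_j(t/\sigma)| \le \exp(-\sigma_j^2 t^2/(4\sigma^2))$ and hence $|\varphi(t)| \le e^{-t^2/4}$ on $[-T_0,T_0]$. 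Applying the telescoping bound $\left|\prod_j a_j - \prod_j b_j\right| \le \sum_j |a_j - b_j|$ (valid when all $|a_j|,|b_j|\le 1$) with $a_j = \varphi_j(t/\sigma)$ and $b_j = e^{-\sigma_j^2 t^2/(2\sigma^2)}$, together with $|e^{-x}-(1-x)|\le x^2/2$ for $x\ge 0$, yields
\[
\left|\varphi(t) - e^{-t^2/2}\right| \le C\,\frac{\rho}{\sigma^3}\,|t|^3\, e^{-t^2/4} \qquad (|t|\le T_0).
\]

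\textbf{Step 3 (Conclusion).} Take $T = T_0 = c_1\sigma^3/\rho$. Then $\frac{1}{\pi}\int_{-T}^{T}\left|\frac{\varphi(t)-e^{-t^2/2}}{t}\right|dt \le \frac{C}{\pi}\frac{\rho}{\sigma^3}\int_{-\infty}^{\infty} t^2 e^{-t^2/4}\,dt = C'\frac{\rho}{\sigma^3}$, while $c_0/T = (c_0/c_1)\,\rho/\sigma^3$; adding the two contributions gives $\sup_x|F(x)-\Phi(x)| \le c_{uni}\cdot\rho/\sigma^3$ for an absolute constant (and one renames to match the $c_{uni}/2$ normalization by absorbing factors). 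The case $\rho/\sigma^3$ large is trivial because the left-hand side is at most $1$, which is exactly what legitimizes assuming $T\ge 1$ in Step~2.

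\textbf{Main obstacle.} The delicate part is Step~2: one must guarantee that the product $\varphi(t)$ neither grows nor drifts away from $e^{-t^2/2}$ across the \emph{long} interval $|t|\le c_1\sigma^3/\rho$, which forces a simultaneous balancing of each remainder $\rho_j|s|^3$ against each drift $\sigma_j^2 s^2$ — this is where the Lyapunov normalization $\sigma_j^2\le\rho_j^{2/3}$ and the precise value of $c_1$ are used, and where an explicit universal constant is extracted. (An alternative is Stein's method, which sidesteps Fourier analysis at the cost of a worse constant; since only the existence of $c_{uni}$ is needed here, either approach is enough.)
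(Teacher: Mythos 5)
The paper does not prove this statement: Theorem~\ref{thm:berry_esseen} is quoted as a classical black box, with a pointer to Section~XVI.5 of Feller~\cite{Feller-II}, and everything downstream (Corollary~\ref{cor:berry_interval_setting}, Lemma~\ref{lem:general_interval_probability}) only uses it as stated. So there is no in-paper argument to compare against; what you have written is a reconstruction of the standard Esseen smoothing proof, and in outline it is the right one: smoothing lemma, third-order Taylor control of each characteristic-function factor, telescoping product comparison, and integration up to $T\asymp\sigma^3/\rho$.

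One step in your Step~2 is not correct as literally stated, and it is exactly the step you flag as delicate. You claim that for all $|t|\le T_0=c_1\sigma^3/\rho$ the cubic remainder $\rho_j|t/\sigma|^3/6$ is dominated by the quadratic term $\sigma_j^2t^2/(2\sigma^2)$ \emph{for every factor $j$ simultaneously}, giving $|\varphi_j(t/\sigma)|\le\exp(-\sigma_j^2t^2/(4\sigma^2))$. That domination requires $|t|/\sigma\lesssim\sigma_j^2/\rho_j$, and the global threshold $T_0/\sigma=c_1\sigma^2/\rho$ need not be below $\sigma_j^2/\rho_j$ for an individual $j$: take one $Y_j$ equal to $\pm M$ with probability $\eps$ each and $0$ otherwise, so $\sigma_j^2/\rho_j=1/M$ while $\sigma^2/\rho$ can be much larger once other variables contribute variance; at $t/\sigma=\pi/M$ one checks $|\varphi_j|=1-4\eps>e^{-\sigma_j^2t^2/(4\sigma^2)}$. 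The Lyapunov inequality $\sigma_j^3\le\rho_j$ goes in the wrong direction to rescue this. The standard fixes are either to prove the decay only for the full product, via $|\varphi(t)|^2=\prod_j|\varphi_j(t)|^2$ where each $|\varphi_j|^2$ is the characteristic function of the symmetrized variable $Y_j-Y_j'$ (variance $2\sigma_j^2$, third absolute moment $\le 8\rho_j$), which yields $|\varphi(t)|\le e^{-t^2/3}$ on $|t|\le T_0$; or to carry the partial-product weights in the telescoping inequality using this aggregate bound rather than a per-factor one. With that repair your Step~3 goes through, since the decay factor $e^{-ct^2}$ is genuinely needed to make $\int_{-T}^{T}|t|^2e^{-ct^2}\,dt$ bounded rather than of order $T^3$. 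Since the paper only needs the existence of $c_{uni}$, citing Feller (or Shevtsova for the explicit constant) remains the appropriate move; your sketch would need the symmetrization step made precise before it could stand alone.
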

\lv{
\begin{remark}
There has been a series of works improving
upon the constant $c_{uni}$, the latest of which by Shevtsova~\cite{Shevtsova2010} shows
that $c_{uni}/2 \le 0.560$ in our setting. However, since we are only concerned with the asymptotic growth of discrepancy, the precise value of $c_{uni}$ is not important.
\end{remark}
}
Theorem~\ref{thm:berry_esseen} and the triangle inequality immediately yield the following corollary:
\begin{corollary} \label{cor:berry_interval_setting}
For any interval $I \subseteq (-\infty, \infty)$,
\[
    |\mb{P}[ Y/\sigma \in I] - \mb{P}[ N(0,1) \in I]| \le \frac{c_{uni}  \rho}{\sigma^3}.
\]
\end{corollary}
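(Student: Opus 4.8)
The plan is to deduce the corollary directly from the Berry--Esseen estimate (Theorem~\ref{thm:berry_esseen}) by writing the probability that $Y/\sigma$, respectively $N(0,1)$, falls in $I$ as a difference of two one-sided tail probabilities, and then invoking the triangle inequality. If $I=\emptyset$ the inequality is trivial, so assume $I\ne\emptyset$ and set $a:=\inf I\in[-\infty,\infty)$ and $b:=\sup I\in(-\infty,\infty]$. Up to the four choices of whether the finite endpoints belong to $I$, we may treat $I$ as one of $(a,b)$, $(a,b]$, $[a,b)$, $[a,b]$, with the cases $a=-\infty$ or $b=\infty$ absorbed into the usual conventions $\Phi(-\infty)=\pr[N(0,1)\le-\infty]=0$, $\Phi(\infty)=1$, and likewise for the distribution function of $Y/\sigma$.

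First I would record the (standard) observation that the one-sided Berry--Esseen bound is insensitive to whether the inequality is strict. Since $x\mapsto\pr[Y/\sigma\le x]$ is nondecreasing and right-continuous while $\Phi$ is continuous, letting $x\uparrow t$ in $|\pr[Y/\sigma\le x]-\Phi(x)|\le (c_{uni}/2)\rho/\sigma^3$ yields $|\pr[Y/\sigma< t]-\Phi(t)|\le (c_{uni}/2)\rho/\sigma^3$ as well, for every $t\in\mb{R}$ (and trivially at $t=\pm\infty$ under the above conventions).

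For concreteness take $I=(a,b]$. Then $\pr[Y/\sigma\in I]=\pr[Y/\sigma\le b]-\pr[Y/\sigma< a]$, and by continuity of $\Phi$ we have $\pr[N(0,1)\in I]=\Phi(b)-\Phi(a)$. Subtracting and applying the triangle inequality bounds $|\pr[Y/\sigma\in I]-\pr[N(0,1)\in I]|$ by $|\pr[Y/\sigma\le b]-\Phi(b)|+|\pr[Y/\sigma< a]-\Phi(a)|$, and each of these two terms is at most $(c_{uni}/2)\rho/\sigma^3$ by Theorem~\ref{thm:berry_esseen} together with the previous paragraph. Their sum is $c_{uni}\rho/\sigma^3$, which is exactly the claimed bound; note the factor $2$ relative to the constant $c_{uni}/2$ in Theorem~\ref{thm:berry_esseen} comes precisely from using the triangle inequality on two tail terms. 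The remaining three endpoint configurations are handled identically after swapping the relevant ``$\le$'' for ``$<$'' (or vice versa) in the decomposition of $\pr[Y/\sigma\in I]$.

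I do not expect any genuine obstacle here; the only point requiring a line of care is the passage between strict and non-strict inequalities when an endpoint is included or excluded, which is dispatched by the monotonicity of the distribution function of $Y/\sigma$ and the continuity of $\Phi$.
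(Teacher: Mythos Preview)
Your proposal is correct and is precisely the argument the paper has in mind: the paper simply states that the corollary follows immediately from Theorem~\ref{thm:berry_esseen} and the triangle inequality, and your decomposition into two one-sided tail terms is exactly that. Your extra care about strict versus non-strict endpoints is sound and more detailed than what the paper bothers to record.
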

We will apply this result to linear combinations of independent Bernoulli's with coefficients in $\{-1,1\}$.
More precisely, let $X_{1},\ldots,X_{n}$ be independent random variables with $X_{i} \sim \Ber(p_{i})$ for some $\bm p = (p_1,\ldots,p_n) \in [0,1]^n$
(where $\Ber(p_i)$ is a Bernoulli of parameter $p_i$).
Given a vector $\bm{a}=(a_{1}, \ldots , a_{n}) \in \{-1,1\}^{n}$, consider the sum
$S_{\bm a,\bm p} := \sum_{k=1}^{n} a_{i}  X_{i}$, whose
standard deviation we denote by $\sigma$. Under these assumptions we obtain the following bound.
\begin{lemma} \label{lem:general_interval_probability}
For any bounded interval $[L,R] \subseteq (-\infty, \infty)$,
\[
 \mb{P}[  S_{\bm a,\bm p}  \in [L,R]] \le \frac{c_{uni}}{\sigma} + \left(1 - \exp\left(-\frac{(R-L)^{2}}{2  \pi  \sigma^{2}} \right) \right)^{1/2}.
\]
\end{lemma}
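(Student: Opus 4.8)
The plan is to transfer the problem to a Gaussian interval probability via the Berry--Esseen bound of Corollary~\ref{cor:berry_interval_setting}, and then to estimate that Gaussian probability sharply. First I would center the summands: put $\mu:=\Ex[S_{\bm a,\bm p}]=\sum_{i}a_ip_i$ and $Y_i:=a_i(X_i-p_i)$, so that the $Y_i$ are independent with mean $0$, with $\var(Y_i)=p_i(1-p_i)=:\sigma_i^2$ (as $a_i^2=1$), with $\sum_i Y_i=S_{\bm a,\bm p}-\mu$, and with $\sigma=\sqrt{\sum_i\sigma_i^2}$. We may assume $\sigma>0$, since otherwise $S_{\bm a,\bm p}$ is deterministic and the right-hand side of the claimed inequality is $+\infty$ (and the Berry--Esseen bound requires $\sigma>0$ anyway). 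The event $\{S_{\bm a,\bm p}\in[L,R]\}$ coincides with $\{(\sum_iY_i)/\sigma\in I\}$ for the interval $I:=\big[(L-\mu)/\sigma,\,(R-\mu)/\sigma\big]$ of length $(R-L)/\sigma$, so Corollary~\ref{cor:berry_interval_setting} gives
\[
   \Pr[S_{\bm a,\bm p}\in[L,R]]\ \le\ \Pr[N(0,1)\in I]+\frac{c_{uni}\,\rho}{\sigma^3},
   \qquad \rho:=\sum_{i=1}^{n}\Ex|Y_i|^3 .
\]

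I would then bound the two terms separately. For the error term, since $|a_i|=1$ we have $\Ex|Y_i|^3=\Ex|X_i-p_i|^3=p_i(1-p_i)^3+(1-p_i)p_i^3=p_i(1-p_i)\big(p_i^2+(1-p_i)^2\big)\le p_i(1-p_i)=\sigma_i^2$, using $p_i^2+(1-p_i)^2\le 1$; summing over $i$ yields $\rho\le\sigma^2$, hence $c_{uni}\rho/\sigma^3\le c_{uni}/\sigma$, which is the first term in the statement. For the Gaussian term, the standard normal density is symmetric and unimodal, so among all intervals of a fixed length $\ell$ the probability is maximized by $[-\ell/2,\ell/2]$; taking $\ell:=(R-L)/\sigma$ gives $\Pr[N(0,1)\in I]\le 2\Phi(\ell/2)-1=\mathrm{erf}\big(\ell/(2\sqrt2)\big)$. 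Finally I would apply the classical P\'olya-type inequality $\mathrm{erf}(x)\le\big(1-e^{-4x^2/\pi}\big)^{1/2}$ (valid for all $x\ge0$, tight as $x\to0$, and verifiable by elementary calculus) with $x=\ell/(2\sqrt2)$; since $4x^2/\pi=\ell^2/(2\pi)=(R-L)^2/(2\pi\sigma^2)$, this gives $\Pr[N(0,1)\in I]\le\big(1-\exp(-(R-L)^2/(2\pi\sigma^2))\big)^{1/2}$, and combining the estimates proves the lemma.

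The only non-routine step is the last one. The naive substitutes are too weak: bounding the Gaussian measure of the square $[-\ell/2,\ell/2]^2$ by that of its circumscribed disc (via a $\chi^2_2$ tail) yields only the exponent $\ell^2/4$, and the trivial density bound gives $\Pr[N(0,1)\in I]\le\ell/\sqrt{2\pi}$; both are strictly weaker than $\big(1-e^{-\ell^2/(2\pi)}\big)^{1/2}$ because $2\pi>4$ and $1-e^{-t}\le t$. Thus one genuinely needs the error-function inequality with the constant $4/\pi$ forced by the linear behaviour of $\mathrm{erf}$ at the origin; once that is in hand, the rest is the bookkeeping above.
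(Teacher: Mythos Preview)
Your proof is correct and follows essentially the same route as the paper's: center the sum, apply Corollary~\ref{cor:berry_interval_setting} together with the bound $\rho\le\sigma^2$, shift the interval to be symmetric about the origin by unimodality of the Gaussian density, and then invoke the P\'olya-type inequality $\mathrm{erf}(x)\le(1-e^{-4x^2/\pi})^{1/2}$ (which the paper states in the equivalent integral form $\int_{-t}^t e^{-x^2/2}\,dx\le\sqrt{2\pi(1-e^{-2t^2/\pi})}$, citing~\cite{Williams46}). The only differences are cosmetic.
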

(When $\sigma=0$, the right-hand side of the bound above is simply interpreted as $+\infty$.)
\sv{The proof of the lemma is postponed to Appendix~\ref{app:prelim}.}
\toappendix{%
\sv{\begin{proof}[Proof of Lemma~\ref{lem:general_interval_probability}]}
\lv{\begin{proof}}
Let $\mu = \mb{E}[S_{\bm a,\bm p}] = \sum_{i=1}^n a_ip_i$.
Then we centre $S_{\bm a,\bm p}$ by defining the random variables $Y_{i} = a_{i}(X_{i}-\mb{E}[X_{i}])$ and setting
\[
    Y:= \sum_{k=1}^{n} Y_{i} = S_{\bm a,\bm p} - \mu.
\]
Observe that $Y$ has the same standard deviation $\sigma$ as $S_{\bm a,\bm p}$, which we assume is non-zero (otherwise the lemma holds trivially). Moreover, $S_{\bm a,\bm p} \in [L,R]$ if and only if $Y/\sigma \in [\til{L}, \til{R}]$,
where  $\til{L}:=(L -\mu)/\sigma$ and $\til{R}:= (R-\mu)/\sigma$.
Further,
\[
\mb{E}[|Y_{i}|^3] = (1-p_{i})p_{i}^3 + p_{i}(1-p_{i})^3 = p_{i}(1-p_{i}) \left( p_{i}^2 + (1-p_{i})^2 \right) \le p_{i}(1-p_{i}),
\]
and hence
\[
\rho = \sum_{i=1}^n \mb{E}[|Y_{i}|^3] \le \sum_{i=1}^n p_{i}(1-p_{i}) = \sigma^2.
\]
Then, Corollary~\ref{cor:berry_interval_setting} immediately yields
\[
\mb{P}[ S_{\bm a,\bm p} \in [L,R]] = \mb{P}[ Y/\sigma \in [\til{L}, \til{R}]]
\le \frac{c_{uni}}{\sigma} + \mb{P}[ N(0,1) \in [\til{L}, \til{R}]].
\]
To finalize the proof, it only remains to bound $\mb{P}[ N(0,1) \in [\til{L}, \til{R}]]$. In order to do so, we will use the inequality
\begin{equation}\label{eqn:error_function}
    \int_{-t}^{t} \exp(-x^{2}/2) \, dx \le
    \sqrt{2\pi  ( 1 - \exp(-2t^2/\pi) ) }
    \qquad\text{for all } t\in\mb R,
\end{equation}
which can be found in~\cite{Williams46}.
Furthermore, note that $\exp(-x^{2}/2)$ is an even function, decreasing with $x^2$.
Combining this fact with~\eqref{eqn:error_function}, we get
\begin{align*}
    \mb{P}[ N(0,1) \in [\til{L}, \til{R}]] &= \frac{1}{ \sqrt{2 \pi}} \int_{\til{L}}^{\til{R}}\exp( -x^{2}/2) \, dx
    \\
    &\le \frac{1}{ \sqrt{2 \pi}} \int_{-\left(\til{R}-\til{L}\right)/2}^{(\til{R} - \til{L})/2} \exp( -x^{2}/2) \, dx
    \\
    &= \frac{1}{ \sqrt{2 \pi}} \int_{-(R-L)/2\sigma}^{(R-L)/2\sigma} \exp( -x^{2}/2) \, dx
    \\
    &\le  \sqrt{ 1 - \exp\left(-\frac{(R-L)^2}{2\pi\sigma^2}\right) },
\end{align*}
which concludes the proof of the lemma.
\end{proof}
}%
Now suppose there exist $0 < p < 1$, $0 < \zeta < 1$ and $0 \le  \eps < 1$ such that
\begin{equation} \label{eqn:averages_approximation}
\sum_{i=1}^{n} p_{i} \ge   (1 - \eps) p n
\qquad\text{and}\qquad
p_{i} \le \zeta \quad \text{for each } i\in[n].
\end{equation}
Then we can restate the upper bound of Lemma~\ref{lem:general_interval_probability} in the following convenient
way, which we use as our key tool in proving Theorems~\ref{thm:edge_independent_lower_bound} and~\ref{thm:edge_dependent_lower_bound}.
\sv{The proof is deferred to Appendix~\ref{app:prelim}.}
\begin{lemma} \label{lem:low_discrepancy_probability}
Suppose $p_{1}, \ldots , p_{n}$ satisfy \eqref{eqn:averages_approximation} for some $0 \le \eps < 1$,
$0 < p < 1$ and $0 < \zeta < 1$. Then, for any bounded interval $[L,R] \subseteq (-\infty, \infty)$,
\begin{align}
    \mb{P}[ S_{\bm a,\bm p} \in [L,R]] &\le
    \frac{c_{uni}}{\sqrt{(1- \zeta)(1 - \eps) n  p}} + \left(1 - \exp\left(-\frac{(R-L)^{2}}{2\pi (1- \zeta)(1 - \eps)np} \right) \right)^{1/2}
    \label{eqn:low_discrepancy_first_bound}
    \\
    &\le  \frac{c_{uni} + |R-L|/\sqrt{2\pi}}{\sqrt{(1- \zeta)(1 - \eps) n  p}} \label{eqn:low_discrepancy_rough_bound}.
\end{align}
\end{lemma}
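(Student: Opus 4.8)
The plan is to derive Lemma~\ref{lem:low_discrepancy_probability} directly from Lemma~\ref{lem:general_interval_probability} by replacing the exact standard deviation $\sigma$ of $S_{\bm a,\bm p}$ with an explicit lower bound expressed in terms of the parameters $p$, $\eps$, $\zeta$ furnished by \eqref{eqn:averages_approximation}. Since the coefficients $a_i$ lie in $\{-1,1\}$ and the $X_i$ are independent, we have $\sigma^2 = \sum_{i=1}^n a_i^2 \var(X_i) = \sum_{i=1}^n p_i(1-p_i)$. The first step is to bound this from below: using $p_i \le \zeta$ we get $1 - p_i \ge 1 - \zeta$, hence $\sigma^2 \ge (1-\zeta)\sum_{i=1}^n p_i \ge (1-\zeta)(1-\eps)pn$, where the last inequality uses the first half of \eqref{eqn:averages_approximation}. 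In particular $\sigma > 0$ whenever $(1-\zeta)(1-\eps)pn > 0$, which holds under the stated hypotheses, so Lemma~\ref{lem:general_interval_probability} applies nontrivially.

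The second step is to substitute this lower bound into the conclusion of Lemma~\ref{lem:general_interval_probability}. The right-hand side there is $\frac{c_{uni}}{\sigma} + \left(1 - \exp(-(R-L)^2/(2\pi\sigma^2))\right)^{1/2}$. Both terms are monotone decreasing in $\sigma$: the first obviously, and for the second, larger $\sigma^2$ makes $(R-L)^2/(2\pi\sigma^2)$ smaller, hence $\exp(-\cdot)$ larger, hence $1 - \exp(-\cdot)$ smaller. Therefore replacing $\sigma^2$ by the smaller quantity $(1-\zeta)(1-\eps)pn$ only increases the bound, which yields \eqref{eqn:low_discrepancy_first_bound} immediately.

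For \eqref{eqn:low_discrepancy_rough_bound}, the remaining step is the elementary inequality $1 - e^{-x} \le x$ valid for all $x \ge 0$, applied with $x = (R-L)^2/(2\pi(1-\zeta)(1-\eps)np)$. Taking square roots gives $\left(1 - \exp(-x)\right)^{1/2} \le |R-L|/\sqrt{2\pi(1-\zeta)(1-\eps)np}$, and adding the first term and combining over the common denominator $\sqrt{(1-\zeta)(1-\eps)np}$ produces the claimed form. None of these steps is a real obstacle; if there is any subtlety it is purely bookkeeping—checking that the monotonicity argument is applied in the correct direction and that the degenerate case $\sigma = 0$ (where Lemma~\ref{lem:general_interval_probability} returns $+\infty$) is harmless here because the hypotheses force the denominator to be positive. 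I would write the proof in three or four lines accordingly.

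\begin{proof}
Since each $a_i \in \{-1,1\}$ and the $X_i$ are independent with $X_i \sim \Ber(p_i)$, the variance of $S_{\bm a,\bm p}$ equals $\sigma^2 = \sum_{i=1}^n p_i(1-p_i)$. Using $p_i \le \zeta < 1$ we have $1 - p_i \ge 1 - \zeta$ for every $i$, so by the first part of \eqref{eqn:averages_approximation},
\[
    \sigma^2 = \sum_{i=1}^n p_i(1-p_i) \ge (1-\zeta)\sum_{i=1}^n p_i \ge (1-\zeta)(1-\eps) n p > 0.
\]
The right-hand side of the bound in Lemma~\ref{lem:general_interval_probability} is a decreasing function of $\sigma > 0$, since $c_{uni}/\sigma$ decreases in $\sigma$ and $(1 - \exp(-(R-L)^2/(2\pi\sigma^2)))^{1/2}$ decreases in $\sigma$ as well (larger $\sigma^2$ makes the exponent closer to $0$). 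Hence replacing $\sigma^2$ by the smaller quantity $(1-\zeta)(1-\eps)np$ in that bound can only increase it, which gives \eqref{eqn:low_discrepancy_first_bound}. Finally, applying $1 - e^{-x} \le x$ for $x = (R-L)^2/(2\pi(1-\zeta)(1-\eps)np) \ge 0$ and taking square roots yields
\[
    \left(1 - \exp\left(-\frac{(R-L)^2}{2\pi(1-\zeta)(1-\eps)np}\right)\right)^{1/2} \le \frac{|R-L|}{\sqrt{2\pi(1-\zeta)(1-\eps)np}},
\]
and combining this with the first term over the common denominator $\sqrt{(1-\zeta)(1-\eps)np}$ gives \eqref{eqn:low_discrepancy_rough_bound}.
\end{proof}
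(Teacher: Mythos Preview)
Your proof is correct and follows essentially the same route as the paper's: bound $\sigma^2 = \sum_i p_i(1-p_i) \ge (1-\zeta)(1-\eps)pn$ using \eqref{eqn:averages_approximation}, plug this into Lemma~\ref{lem:general_interval_probability} via monotonicity to get \eqref{eqn:low_discrepancy_first_bound}, and then apply $1-e^{-x}\le x$ for \eqref{eqn:low_discrepancy_rough_bound}. The only difference is that you spell out the monotonicity-in-$\sigma$ step explicitly, which the paper leaves implicit.
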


\toappendix{
  \sv{\begin{proof}[Proof of Lemma~\ref{lem:low_discrepancy_probability}]}
  \lv{\begin{proof}}
First note that the variance $\sigma^2$ of $S_{\bm a,\bm p}$ satisfies
\[
\sigma^2 = \sum_{i=1}^n p_i(1-p_i) \ge (1-\zeta)\sum_{i=1}^n p_i \ge (1-\zeta)(1-\eps)pn.
\]
This bound, used with Lemma~\ref{lem:general_interval_probability}, immediately gives~\eqref{eqn:low_discrepancy_first_bound}. 
Then~\eqref{eqn:low_discrepancy_rough_bound} follows by applying the inequality $1-\exp(-x) \le x$, which holds for every $x\in \mb{R}$.
%So plugging that into the bound of Lemma~\ref{lem:general_interval_probability} immediately gives~\eqref{eqn:low_discrepancy_first_bound}.
%To derive~\eqref{eqn:low_discrepancy_rough_bound}, just apply the inequality $1-\exp(-x) \le x$, which holds for every $x \in \mb{R}$, and perform elementary manipulations.
\end{proof}
}

% \sv{In Theorem~\ref{thm:dense_setting}, we prove an upper bound on discrepancy in the dense regime ($m  \gg n$).
% In this parameter range, we make use of the \textbf{algorithmic partial colouring lemma},
% a seminal result of Lovett and Meka \cite{Lovett_2012}.}

\lv{In Theorem~\ref{thm:dense_setting}, we prove an upper bound on discrepancy in the dense regime ($m  \gg n$).
In this parameter range, we make use of the \textbf{algorithmic partial colouring lemma},
a seminal result of Lovett and Meka~\cite{Lovett_2012}
later made deterministic by Levy, Ramadas, and Rothvoss~\cite{Levy2017DeterministicDM}.
We defer the statement of
this result to Lemma~\ref{lem:partial_colouring} of Section~\ref{sec:upper-bound},
as it will not be needed until then.

}

% \subsection{An Overview of Our Techniques}

% \textcolor{blue}{Theorem~\ref{thm:edge_independent_lower_bound} can be verified using a first-moment argument, presented in Section 3.1. The proof of Theorem~\ref{thm:edge_dependent_lower_bound} (given in Section 3.2) uses similar ideas to that of Theorem~\ref{thm:edge_independent_lower_bound}, but is complicated by the dependencies between edges. We address these issues by considering the process (can I use that word here?) where rows of the incidence matrix are revealed one at a time. We show for a sufficiently large fraction of the rows that, conditional on the previously-revealed rows, the probability of an entry in the next row being nonzero is close to $d/m$. This allows us to complete the argument using similar tools as in the edge-independent model.}

% \textcolor{blue}{The proof of Theorem~\ref{thm:dense_setting} rests on iteratively applying the algorithmic partial colouring lemma of Lovett and Meka \cite{Lovett_2012} to find a colouring with the desired discrepancy.}

\section{Lower Bounding Discrepancy}\label{sec:lower}

\subsection{The Edge-Independent Model}
We now return to the setting of hypergraph discrepancy in the 
context of the edge-independent model $\mb{H}(n,m,p)$. Throughout this section, $m=m(n)$, $p=p(n)$ and asymptotic statements are with respect to $n\to\infty$.
We first observe that \whp\ there are some edges containing an odd number of vertices and thus the discrepancy cannot be zero. \sv{The proof is included
in Appendix \ref{app:lower}.}
\begin{prop} \label{prop:independent_constant_regime}
Suppose $H\sim\mb{H}(n,m,p)$ with $m\to\infty$, $pn\to\infty$ and $p$ bounded away from $1$ as $n\to\infty$. Then \whp\ $\disc(H) \ge 1$.
\end{prop}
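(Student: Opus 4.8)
The plan is to show that \whp\ at least one edge of $H$ has odd size, since an edge $e$ with $|e|$ odd forces $|\psi(e)|\ge 1$ for every colouring $\psi$ (the sum of an odd number of $\pm1$'s is never zero), and hence $\disc(H)\ge 1$. For a single edge $e_i$, its size $|e_i|$ is a sum of $n$ independent $\Ber(p)$ random variables, so $|e_i|\sim\Bin(n,p)$. Thus it suffices to bound below the probability that a $\Bin(n,p)$ variable is odd, and then use independence of the $m$ edges to conclude that \whp\ not all of them are even.

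First I would estimate $q := \pr[\Bin(n,p)\text{ is odd}]$. The clean way is the standard generating-function identity
\[
  \pr[\Bin(n,p)\text{ is even}] - \pr[\Bin(n,p)\text{ is odd}] = (1-2p)^n,
\]
obtained by evaluating $\sum_k \binom{n}{k}p^k(1-p)^{n-k}x^k$ at $x=-1$. Hence $q = \tfrac12\bigl(1-(1-2p)^n\bigr)$. Since $p$ is bounded away from $1$, we have $|1-2p|\le 1-\delta$ for some constant $\delta>0$ when $p\le 1-\delta$ (the case $p$ near $1/2$ only makes $q$ closer to $1/2$), so in all cases $q \ge \tfrac12\bigl(1-(1-\delta)^n\bigr) \to \tfrac12$; in particular $q\ge 1/4$ for $n$ large. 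One subtlety: if $p\to 1/2$ from above or the parity behaviour near $p=1$ needs care, note $(1-2p)^n$ could be close to $1$ only if $p\to 0$, but the hypothesis $pn\to\infty$ rules that out since then $(1-2p)^n \le e^{-2pn}\to 0$. Either way $q$ is bounded below by a positive constant (in fact $q\to 1/2$).

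Then, since the edges $e_1,\dots,e_m$ are mutually independent,
\[
  \pr[\text{every edge has even size}] = (1-q)^m \le (3/4)^m \to 0
\]
as $m\to\infty$. Therefore \whp\ some edge has odd size, and on that event $\disc(H)\ge 1$, which completes the proof. The only mild obstacle is handling the parity factor $(1-2p)^n$ uniformly over the allowed range of $p$; this is dispatched by splitting into the cases ``$p$ bounded away from $0$'' (where $|1-2p|\le 1-\delta$) and ``$p\to 0$'' (where $pn\to\infty$ gives $(1-2p)^n\to 0$), so that $q$ is bounded away from $0$ in both.
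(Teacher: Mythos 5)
Your proof is correct and follows essentially the same approach as the paper: show via the generating-function identity that each edge has odd size with probability $\tfrac{1}{2}(1-(1-2p)^n)=\tfrac{1}{2}+o(1)$ (using $pn\to\infty$ and $p$ bounded away from $1$ to kill the $|1-2p|^n$ term), then use independence of the $m$ edges to conclude that \whp\ some edge is odd, forcing $\disc(H)\ge 1$. The only cosmetic difference is that the paper bounds $|1-2p|^n\le\max\{(1-2\eps)^n, e^{-2pn}\}=o(1)$ in one line rather than your two-case split, but the content is identical.
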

\sv{\toappendix{\section{Additions to Section~\ref{sec:lower}}\label{app:lower}}}
\toappendix{
\sv{\begin{proof}[Proof of Proposition \ref{prop:independent_constant_regime}]|}
\lv{\begin{proof}}
By hypothesis, $p \le 1-\eps$ for some sufficiently small constant $\eps>0$.
Let $e_{1}, \ldots , e_{m}$ be the edges of $H$, and observe that the number of vertices contained in each edge is distributed as $\Bin(n,p)$. Then the probability that $e_i$ has an even number of vertices is
\begin{equation}\label{eq:even}
\sum_{j\text{ even}} \binom{n}{j}p^j(1-p)^{n-j} =  \frac{1}{2} (1 + (1-2 p)^{n}) = 1/2+o(1),
\end{equation}
where we used the fact that $|1-2p|^n \le \max \{ (1-2\eps)^n,  e^{-2pn} \} = o(1)$ as $n\to\infty$.
Hence, the probability all the edges of $H$ contain an even number of vertices is $(1/2+o(1))^{m}=o(1)$.
Therefore, \whp\ $H$ has an edge with an odd number of vertices, and thus has discrepancy at least $1$.
\end{proof}
}
Proposition \ref{prop:independent_constant_regime} trivially implies Theorem~\ref{thm:edge_independent_lower_bound} in the regime in which $2^{-n/m} \sqrt{pn} = O(1)$.
We now use Lemma~\ref{lem:low_discrepancy_probability} to prove the
remaining cases of Theorem~\ref{thm:edge_independent_lower_bound} via a simple
first moment argument:
\begin{proof}[Proof of Theorem~\ref{thm:edge_independent_lower_bound}]
Suppose that $H=(V,E)$ is generated from $\mb{H}(n,m,p)$ with $pn\to\infty$, $pm\to\infty$ and $p$ bounded away from $1$, as $n\to\infty$.
We define
\[
\hat f = \hat f(n) = \begin{cases}
2^{-n/m} \sqrt{p(1-p)n} & \text{if }m = O(n),
\\
\sqrt{p(1-p) n \log \gamma} & \text{if }m \gg n,
\end{cases}
\]
where $\gamma= \min\{ p  n, m/n\}$, and choose a sufficiently small constant $\kappa>0$.
To prove the theorem, it suffices to show that \whp\ $\disc(H) \ge \max \{ \kappa \hat f, 1\}$.
Note that this is trivially true when $\hat f \le 1/\kappa$, in view of Proposition~\ref{prop:independent_constant_regime}. So we will assume that $\hat f > 1/\kappa$, and show that \whp\ $\disc(H) \ge  \kappa \hat f$.
Let $\Psi$ be the set of all colourings $\psi:V\to\{-1,1\}$, and let $Z$ denote the number of colourings $\psi \in \Psi$ with discrepancy $\disc(\psi) \le \kappa \hat f$.
Since the random edges $e_1,\ldots,e_m$ of $H$ are \iid,
\begin{equation}
\mb{E}[Z] = \sum_{\psi\in\Psi} \Pr[ \disc(\psi) \le \kappa  \hat{f} ] =  \sum_{\psi\in\Psi} \Pr[ |\psi(e_1)| \le \kappa  \hat{f}]^{m}.
\label{eq:EZ}
\end{equation}
Note that $\psi(e_1) = \sum_{i=1}^n \psi(v_i) \bm 1_{v_i\in e_1}$, where $\bm 1_{v_i\in e_1}$ denotes the indicator random variable of the event that edge $e_1$ contains vertex $v_i$, so $\psi(e_1)$ is distributed as $S_{\bm a,\bm p}$ in Section~\ref{sec:prelim} (with $a_i=\psi(v_i)$ and $p_i=\Pr[v_i\in e_1]$).
Hence, by applying~\eqref{eqn:low_discrepancy_rough_bound} in Lemma~\ref{lem:low_discrepancy_probability} (with $\epsilon = 0$, $\zeta = p$ and $[L,R]=[-\kappa\hat f,\kappa\hat f]$) to each one of the $2^n$ terms of the last sum in~\eqref{eq:EZ}, it follows that
\[
\mb{E}[Z] \le 2^n \left( \frac{c_{uni} + 2\kappa\hat f/\sqrt{2\pi}}{\sqrt{p(1- p)n}} \right)^m
 \le 2^n \left( \frac{\kappa\hat f ( c_{uni} + \sqrt{2/\pi})}{\sqrt{p(1- p)n}} \right)^m,
\]
where we also used that $\kappa \hat f>1$. Let us consider first the case that $m=O(n)$. Then, from the definition of $\hat f$ and assuming $\kappa < 1/(c_{uni} + \sqrt{2/\pi})$,
\[
\mb{E}[Z]  \le  \left(  \kappa   (c_{uni} + \sqrt{2/\pi})  \right)^m = o(1).
\]
Now suppose that $m\gg n$. In this case, we bound the factor $\Pr[ |\psi(e_1)| \le \kappa  \hat{f}]$ on the right-hand side of~\eqref{eq:EZ} using the tighter inequality~\eqref{eqn:low_discrepancy_first_bound} in Lemma~\ref{lem:low_discrepancy_probability} instead of~\eqref{eqn:low_discrepancy_rough_bound}. Then, assuming that $C:=2\kappa^2/\pi<1/2$, we get
\begin{align*}
\mb{E}[Z] &\le
2^n \left( \frac{c_{uni}}{\sqrt{p(1- p)n}} + \left(1 - \exp\left(-\frac{(2\kappa\hat f)^{2}}{2\pi p(1- p)n} \right) \right)^{1/2} \right)^m
\\
&= 2^n \left( \frac{c_{uni}}{\sqrt{p(1- p)n}} + \left(1 - \gamma^{- 2\kappa^2/\pi} \right)^{1/2} \right)^m
\\
&= \left( 1 + O\left(n/m\right) \right)^m  \left( O(1/\sqrt{pn}) + \left(1 - \gamma^{-C} \right)^{1/2} \right)^m
\\
&= \left( 1 - \frac{1}{2}\gamma^{-C}(1+o(1))  \right)^m
\lv{\\}
\lv{&}= \exp \left(  - \frac{m}{2}\gamma^{-C}(1+o(1))  \right) = o(1),
\end{align*}
where we used the facts $1/\sqrt{pn}=o(\gamma^{-C})$, $n/m=o(\gamma^{-C})$ and $m/\gamma^C\to\infty$.
In either case, $\mb{E}[Z]=o(1)$, and therefore \whp\ $\disc(H) \ge  \kappa \hat f$.
\end{proof}

\subsection{The Edge-Dependent Model}\label{sec:edge-dependent}

In this section, we derive a lower bound on the discrepancy of a hypergraph generated from
the edge-dependent model and prove Theorem~\ref{thm:edge_dependent_lower_bound}.
%Recall that if $n$, $m$ and $d$ are fixed integers, then a random
%hypergraph $H=(V,E)$ drawn from $\scr{H}(n,m,d)$ has $n$ vertices $V:=\{v_{1}, \ldots ,v_n\}$
%and $m$ random edges $E:=\{e_{1},\ldots,e_m\}$. For $k=1, \ldots ,n$, each vertex $v_{k}$ selects exactly $d$ edges in $E$ to contain that vertex, independently and uniformly at random.
In view of our previous results for the edge-independent model, one natural approach is to compare both models $\scr{H}(n,m,d)$ and $\mb{H}(n,m,p)$ via a \textbf{coupling procedure}.
For instance, let $m=n$ and $d\gg\log n$ for simplicity, and suppose that we can generate $(H_1,H_2)$ with $H_1\sim\scr{H}(n,n,d)$ and $H_2\sim\mb{H}(n,n,p)$, with edge sets $E(H_1) = \{e^1_1,\ldots,e^1_n\}$ and $E(H_2)=\{e^2_1,\ldots,e^2_n\}$, in such a way that \whp\ for every $i=1,\ldots,n$ we have $|e^1_i\Delta e^2_i|\le\eta$, for some suitable $\eta=\eta(n)$. In particular, this implies that \whp\ $|\disc(H_1) - \disc(H_2)| \le \eta$, and thus $\disc(H_1) = \Omega(\sqrt d + \eta)$ by Theorem~\ref{thm:edge_independent_lower_bound}.
Unfortunately, since the standard deviation of the size of an edge is $\Theta(\sqrt d)$ in either model,
most {\em na\"\i ve} attempts to build such a coupling require $\eta\gg\sqrt d$, which is too large for our purposes. (In fact, it is not hard to build such a coupling with any $\eta\gg\sqrt{d\log n}$.) 
As a result, while it is conceivable that a more delicate coupling argument works, we abandon this approach. Instead, we handle the dependencies of the edges of $H_2$ by applying a careful conditioning argument, while generalizing how we apply Lemma~\ref{lem:low_discrepancy_probability}.

As in the edge-independent model, we first prove a constant lower bound on the discrepancy of $H\sim\scr{H}(n,m,d)$.
\sv{The proof of the following result is deferred to Appendix~\ref{app:lower}.}
\begin{prop}\label{prop:edge_dependent_lower_bound_constant}
Suppose $H\sim\scr{H}(n,m,d)$ with $m \rightarrow \infty$, $dn/m\to\infty$ and $d/m$ bounded away from $1$ as $n\to\infty$. Then \whp\ $\disc(H) \ge 1$.
\end{prop}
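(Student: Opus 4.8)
The plan is to mimic the first-moment argument used for Proposition~\ref{prop:independent_constant_regime}: show that \whp\ some edge of $H\sim\scr H(n,m,d)$ contains an odd number of vertices, which immediately forces $\disc(H)\ge 1$. So it suffices to prove that the probability that \emph{every} edge has even size tends to $0$. The difficulty compared to the edge-independent case is that the edges of $H_2$ are not independent, since each vertex $v$ chooses its degree-$d$ set $I_v\subseteq[m]$ and these choices couple the rows. However, the \emph{columns} of the incidence matrix are independent, and this is exactly what one needs to control the parities.

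First I would express the parities of the edge sizes as a linear functional over $\mathbb{F}_2$ of the column vectors. For each vertex $v$ let $\bm X_v\in\mathbb{F}_2^m$ be the indicator vector of $I_v$; these are i.i.d., each uniform over the weight-$d$ vectors in $\mathbb{F}_2^m$. The vector of edge-size parities is $\bm P:=\sum_{v\in V}\bm X_v\in\mathbb{F}_2^m$, and the ``bad'' event (all edges even) is exactly $\{\bm P=\bm 0\}$. I would bound $\Pr[\bm P=\bm 0]$ by a Fourier/character argument on $\mathbb{F}_2^m$: for $\bm y\in\mathbb{F}_2^m$, $\Pr[\bm P=\bm 0]=2^{-m}\sum_{\bm y}\big(\mathbb{E}[(-1)^{\langle \bm y,\bm X\rangle}]\big)^n$, where $\bm X$ is one uniform weight-$d$ vector. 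The character value $\mathbb{E}[(-1)^{\langle\bm y,\bm X\rangle}]$ depends only on $k:=|\mathrm{supp}(\bm y)|$ and equals $K_d(k)/\binom{m}{d}$ where $K_d(k)=\sum_{j}(-1)^j\binom{k}{j}\binom{m-k}{d-j}$ is a Krawtchouk polynomial; in particular it is $1$ when $k=0$. So $\Pr[\bm P=\bm 0]=2^{-m}\sum_{k=0}^m\binom mk\big(K_d(k)/\binom md\big)^n$, and the $k=0$ term contributes $2^{-m}$, which is already $o(1)$ since $m\to\infty$. The remaining task is to check that the whole sum is $O(2^{-m})$, i.e.\ that the $k\ge1$ terms do not blow up.

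The cleanest way to handle the $k\ge 1$ terms is \emph{not} via delicate Krawtchouk estimates but by a direct conditioning/coupling: condition on the choices $\bm X_{v}$ for all but one vertex, say $v_n$; then $\bm P=\bm c+\bm X_{v_n}$ for a fixed $\bm c$, and $\Pr[\bm P=\bm 0\mid \bm X_{v_1},\dots,\bm X_{v_{n-1}}]=\Pr[\bm X_{v_n}=\bm c]\le\max_{\bm z}\Pr[\bm X_{v_n}=\bm z]=1/\binom md$. Since $dn/m\to\infty$ and $d/m$ is bounded away from $1$, one has $\binom md\to\infty$ (indeed $\binom md\ge (m/d)^{d}\ge$ a quantity tending to infinity, using $d\le(1-\eps)m$ so $m/d\ge 1/(1-\eps)>1$, together with $d\to\infty$; note $d\to\infty$ follows from $dn/m\to\infty$ only if $m/n$ is bounded, so I would instead argue $\binom md=\binom{m}{d}\ge\binom{m}{\min(d,m-d)}$ and split into the cases $d\le m/2$ and $d>m/2$, in each case getting $\binom md\to\infty$ from $m\to\infty$ and $d/m$ bounded away from $0$ and $1$; if $d/m\to 0$ then $\binom md\ge m/d\ge$ ... — here one genuinely needs $dn/m\to\infty$ to rule out the degenerate case $d=O(1)$ with $m$ large, for which $\binom md$ could still be large, so this is in fact fine). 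Thus $\Pr[\text{all edges even}]\le 1/\binom md\to 0$, completing the proof.

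The main obstacle is the bookkeeping around the parameter regime: one must verify $\binom md\to\infty$ under the hypotheses $m\to\infty$, $dn/m\to\infty$, $d/m$ bounded away from $1$ — in particular handling the case where $d$ stays bounded (then $m/n\to\infty$, and $\binom md\ge\binom m1=m\to\infty$) versus $d\to\infty$ (then $\binom md\ge (m/d)^d$ or a Stirling bound gives divergence since $d\le(1-\eps)m$). Once that case analysis is in place the one-vertex conditioning bound $\Pr[\text{all even}]\le 1/\binom md$ is immediate and no Krawtchouk machinery is actually needed; the Fourier computation above is an optional alternative that gives the sharper $O(2^{-m})$ but is not required for the statement.
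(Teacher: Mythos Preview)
Your conditioning argument is correct and is genuinely different from the paper's proof. The paper proceeds by a second-moment computation: it lets $W$ be the number of edges of odd size, shows $\mb E W \sim m/2$ via the same parity identity as in Proposition~\ref{prop:independent_constant_regime}, and then computes $\mb E W^2$ using a bivariate generating function for the joint parity of two edges, obtaining $\var W = o((\mb E W)^2)$ and concluding by Chebyshev. Your approach instead exploits column independence directly: conditioning on $\bm X_{v_1},\ldots,\bm X_{v_{n-1}}$ and bounding $\Pr[\bm X_{v_n}=\bm c]\le 1/\binom md$ is a clean anti-concentration step that bypasses any edge-correlation calculation. This is more elementary than the paper's route and in fact delivers the quantitative improvement the paper explicitly flags as open in the remark following its proof (an upper bound on $\Pr[W=0]$ that is much smaller than what Chebyshev gives). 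What the paper's argument buys in exchange is the stronger conclusion $W\sim m/2$ \whp, not merely $W\ge 1$.

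Two small comments. First, the Fourier/Krawtchouk paragraph is a detour you do not need; the one-vertex conditioning already finishes the job. Second, your case analysis for $\binom md\to\infty$ is more laboured than necessary and slightly muddled (you do not actually need $dn/m\to\infty$ for this step). Since $d\ge 1$ by definition of the model and $d/m\le 1-\eps$ forces $d\le m-1$ for large $m$, you have $1\le d\le m-1$ and hence $\binom md\ge m\to\infty$; that is all that is required.
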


\toappendix{
\sv{\begin{proof}[Proof of Proposition~\ref{prop:edge_dependent_lower_bound_constant}]}
\lv{\begin{proof}}
By hypothesis, $p =d/m \le 1-\eps$ for some sufficiently small constant $\eps>0$.
Our goal is to show that \whp\ there is some edge in $H$ containing an odd number of vertices, so $\disc(H)\ge1$.
As in the proof of Proposition~\ref{prop:independent_constant_regime},
the probability that an edge $e_i$ has an odd number of vertices is $1/2+o(1)$ (see~\eqref{eq:even}). Therefore, the expected number $W$ of edges with an odd number of vertices is $(1+o(1))m/2$. Similarly, the probability that two different edges (say, $e_1$ and $e_2$) have an odd number of vertices is $1/4+o(1)$. To see this, first define the bivariate generating function
\[
f(x,y) = \left( \frac{d(d-1)}{m(m-1)}xy + \frac{d(m-d)}{m(m-1)}(x+y) + \frac{(m-d)(m-d-1)}{m(m-1)} \right)^n.
\]
For $0\le i,j\le n$, the coefficient
\[
[x^iy^j] f(x,y)
\]
gives the probability that edge $e_1$ has $i$ ones and edge $e_2$ has $j$ ones. Hence, the probability that both edges have an odd number of vertices is
\[
\frac{f(1,1)-f(-1,1)-f(1,-1)+f(-1,-1)}{4},
\]
which after tedious but straightforward calculations and using the facts that $d/m\le 1-\eps$ and $dn/m\to\infty$, is equal to
\[
\frac{ 1 - 2 \left(1-\frac{2d}{m}\right)^n + \left( 1 - 4\frac{d(m-d)}{m(m-1)} \right)^n }{4}
= 1/4 +o(1),
\]
and hence $\ex W^2 \sim m^2/4$. This implies that $\var W = o((\ex W)^2)$ and, by a standard application of Chebyshev's inequality, we conclude that \whp\ $W\sim m/2\to\infty$.
This proves the proposition.
\end{proof}
}

\lv{\begin{remark}
It is conceivable that in the proof of the proposition above one could obtain an upper bound on $\mb{P}[W=0]$ that is exponentially small in $m$, in the same spirit as in the proof of Proposition~\ref{prop:independent_constant_regime}.
However, this would require some additional work due to the fact that the edges of $H\sim\scr H(n,m,d)$
are \textit{not} formed independently.
\end{remark}}

Proposition~\ref{prop:edge_dependent_lower_bound_constant} trivially implies Theorem~\ref{thm:edge_dependent_lower_bound} in the regime in which $2^{-n/m} \sqrt{dn/m} = O(1)$.
To prove the remaining cases, we will generalize the ideas we used in the proof of
Theorem~\ref{thm:edge_independent_lower_bound}. However, the dependencies among the edges make the argument much more delicate.

\begin{proof}[Proof of Theorem~\ref{thm:edge_dependent_lower_bound}]
Suppose that $H=(V,E)$ is generated from $\scr{H}(n,m,d)$ with $m=m(n)$ and $d=d(n)$ satisfying $dn/m\to\infty$ and $d\to\infty$ (as $n\to\infty$) and with $p=d/m \le c$ for some constant $0<c<1$.
For short, we use $\scr H$ to denote the sample space of the distribution, i.e.~the set of all possible outcomes of $H$.
Fix a constant $0 \le \eps < \min\{1, 1/c-1\}$, and define
\[
\hat{f} = \hat{f}(n) :=
\begin{cases}
2^{-n/m} \sqrt{pn(1-\eps)(1-c(1+\eps))} = \Omega(2^{-n/m} \sqrt{dn/m}) & \text{if } m=O(n),
\\
\sqrt{pn\log\gamma(1-\eps)(1-c(1+\eps))} = \Omega(\sqrt{pn\log\gamma}) & \text{if } m\gg n,
\end{cases}
\]
where $\gamma=\min\{pn,m/n\}$. Let $\kappa>0$ be a sufficiently small constant.
To prove Theorem~\ref{thm:edge_dependent_lower_bound}, it suffices to show that \whp\ $\disc(H) \ge \max \{ \kappa \hat f, 1\}$.
Note that this is trivially true when $\hat f \le 1/\kappa$, in view of Proposition~\ref{prop:edge_dependent_lower_bound_constant}. So we will assume that $\hat f > 1/\kappa$, and show that \whp\ $\disc(H) \ge  \kappa \hat f$.
Note that this assumption ensures that $n = O(m\log d)$, i.e.~there are not {\em too} many more vertices than edges.

Let $Z$ be the number of colourings $\psi:V\to\{-1,1\}$ with discrepancy $\disc(\psi) \le \kappa \hat f$.
We would like to prove an analogue of~\eqref{eq:EZ} in order to bound $\ex Z$, but unfortunately the random edges $e_1,\ldots,e_m$ of $H$ are no longer {\iid}
In order to overcome this obstacle, we introduce some random variables that will play an essential role in the analysis of $Z$.
For each $j=1,\ldots,m$ and $k=1,\ldots,n$, let $A_{j,k} := \bm{1}_{[v_k \in e_{j}]}$ be the $\{0,1\}$ value of the $(j,k)$ entry of the incidence matrix $\bm{A}$ of $H$, and let $\bm{A}_{j}=(A_{j,1},\ldots ,A_{j,n})$ denote the $j$-th row of $\bm{A}$.
Moreover, for each $k=1,\ldots,n$, we define
\begin{equation}\label{eqn:matrix_random_variables}
B_{0,k} := d
\qquad\text{and}\qquad
B_{i,k} := d - \sum_{j=1}^{i} A_{j,k}
\quad\text{for } i=1,\ldots,m.
\end{equation}
In other words, $B_{i,k}$ counts the number of ones that appear in the $k$-th column of $\bm{A}$ below the $i$-th row (recall that each column of $A$ has exactly $d$ ones). 
Note that each $B_{i,k}$ can be expressed as a function of the first $i$ rows of $\bm{A}$, and moreover the distribution of $A_{i+1,k}$ conditional on the outcome of $A_{1,k},\ldots,A_{i,k}$ can be described solely in terms of $B_{i,k}$.
More formally, for each $i=1,\ldots,m$, let $\scr F_i$ be the sigma algebra generated by $\bm{A}_1,\ldots,\bm{A}_i$, and let $\scr{F}_0:= \{ \emptyset, \scr H\}$ denote the trivial sigma algebra. Then, for each $i=0,\ldots,m$ and $k=1,\ldots,n$, $B_{i,k}$ is measurable with respect to $\scr F_i$, and (for $i<m$)
\[
P_{i,k} := \mb{P}[ A_{i+1,k} = 1 \mid \scr{F}_{i}] = \frac{B_{i,k}}{m-i}.
\]
Intuitively speaking, we would like that the above conditional probabilities remain close to $p$ (on average) as we reveal new rows of $\bm{A}$, at least for a large number of rows.
In view of that, for each $i=0,\ldots,m-1$, we consider the event $Q_i$ that for every $0\le j\le i$
\[
\sum_{k=1}^{n} P_{j,k} \ge (1 - \eps) pn
\qquad\quad\text{and}\qquad\quad
P_{j,k} \le (1+\eps) c
\quad\text{for $k=1,\ldots,n$}.
\]
(Here $(1+\eps)c<1$ from our choice of $\eps$.)
Observe that $\scr H=Q_0 \supseteq\cdots \supseteq Q_{m-1}$ is a decreasing sequence of events, and each $Q_i$ is $\scr{F}_i$-measurable by construction.
Let $\alpha := \max\{n/(n+m),1/2\}$.
We need the following technical result, which we prove in Section~\ref{sec:typical_histories}.
\begin{prop} \label{prop:typical_histories}
Under the assumptions in the proof of Theorem~\ref{thm:edge_dependent_lower_bound}, \lv{event} $Q_{\lfloor\alpha m\rfloor}$ holds \whp
\end{prop}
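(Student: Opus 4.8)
The plan is to control the two ``bad'' ways that $Q_{\lfloor\alpha m\rfloor}$ could fail, row by row, using a martingale/union-bound argument over $i=0,\ldots,\lfloor\alpha m\rfloor$. Fix $i<\lfloor\alpha m\rfloor$ and work conditionally on $\scr F_i$. Recall $P_{i,k}=B_{i,k}/(m-i)$, where $B_{i,k}=d-\sum_{j\le i}A_{j,k}$ counts the ones remaining in column $k$ below row $i$. First I would handle the pointwise upper bound $P_{i,k}\le(1+\eps)c$. Since $B_{i,k}\le d=pm$ and $i\le\alpha m$, we always have $P_{i,k}\le pm/(m-i)\le pm/((1-\alpha)m)=p/(1-\alpha)$; when $m\gg n$ we have $\alpha=1/2$ and $p=d/m\le c$, so $P_{i,k}\le 2c$, which is not yet good enough, so in that regime one must instead use that $B_{i,k}$ concentrates: conditional on $\scr F_{i-1}$, the quantity $\sum_{j\le i}A_{j,k}$ is a sum of (dependent, negatively associated) Bernoulli-type increments whose conditional means sum to at most $ip/(1-\alpha)\cdot(\text{something})$; more cleanly, by exchangeability of the rows, $\sum_{j\le i}A_{j,k}$ is hypergeometric-like with mean $di/m\le\alpha d$, so $B_{i,k}\ge d(1-\alpha-o(1))$ fails only with probability exponentially small in $d$ (here $d\to\infty$ is used), after which $P_{i,k}\le B_{i,k}/(m-i)\le d/(m-i)\le p/(1-\alpha)$ — and to get the constant down to $(1+\eps)c$ one uses instead the \emph{upper} tail $B_{i,k}\le d(\alpha + \delta)$... wait, that goes the wrong way; in fact the correct statement is that $B_{i,k}\le d$ always and the genuine worry is only the lower bound on $\sum_k P_{i,k}$, so let me restructure.

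The real content is the lower bound $\sum_{k=1}^n P_{j,k}\ge(1-\eps)pn$ for all $j\le i$. Note $\sum_k P_{j,k}=\frac{1}{m-j}\sum_k B_{j,k}=\frac{1}{m-j}\big(dn-\sum_{j'\le j}\sum_k A_{j',k}\big)=\frac{1}{m-j}\big(dn-\sum_{j'\le j}|e_{j'}|\big)$, where $|e_{j'}|$ is the size of edge $j'$. So $Q_i$ is essentially the event that the partial sums $\sum_{j'\le j}|e_{j'}|$ do not get too large, uniformly in $j\le i$: precisely, $\sum_k P_{j,k}\ge(1-\eps)pn$ iff $\sum_{j'\le j}|e_{j'}|\le dn - (1-\eps)pn(m-j) = pn(m - (1-\eps)(m-j)) = pn(\eps m + (1-\eps)j)$ — wait, $dn=pmn$, so this is $pn(m-(1-\eps)(m-j))=pn(j+\eps(m-j))$; since we only need $j\le\alpha m$, it suffices that $\sum_{j'\le j}|e_{j'}|\le pn(j+\eps(1-\alpha)m)$. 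Now $\ex[\,|e_{j'}|\,]=pn$ for every $j'$ (each vertex lies in edge $j'$ with probability $d/m=p$), so $\ex\sum_{j'\le j}|e_{j'}|=pnj$, and we are asking the partial sums to not exceed their mean by more than $pn\eps(1-\alpha)m=\Theta(\eps pnm)$. I would prove this via a maximal inequality: the process $M_j:=\sum_{j'\le j}(|e_{j'}|-\ex[|e_{j'}|\mid\scr F_{j'-1}])$ is a martingale, the conditional means $\ex[|e_{j'}|\mid\scr F_{j'-1}]=\sum_k P_{j'-1,k}$ stay within the range defining $Q_{j'-1}$, and the increments are bounded by $n$; a Doob/Azuma-type bound then gives that the maximum over $j\le\alpha m$ deviation exceeds $\eps pn(1-\alpha)m$ with probability at most $\exp(-\Omega(\eps^2 p^2 n^2 m^2/(mn^2)))=\exp(-\Omega(\eps^2 p^2 m n))=\exp(-\Omega(\eps^2 p\cdot pm\cdot n))$, which $\to 0$ since $pm=d\to\infty$ and $n\to\infty$ (even $n\ge1$ and $d\to\infty$ suffices as $pn\cdot d\to\infty$). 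One must be slightly careful to run this argument on the stopped process so that the conditional-mean control from $Q_{j-1}$ is available until the first failure time.

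For the pointwise upper bound $P_{j,k}\le(1+\eps)c$: as observed $P_{j,k}=B_{j,k}/(m-j)\le d/(m-j)$, and for $j\le\alpha m$ with $\alpha$ chosen as $\max\{n/(n+m),1/2\}$ this gives $P_{j,k}\le d/((1-\alpha)m)=p/(1-\alpha)$. When $m\gg n$, $1-\alpha\to 1/2$ so this is $\le(2+o(1))p\le(2+o(1))c$, again not tight enough, so in the regime $m\gg n$ I would additionally invoke concentration of $B_{j,k}$: by exchangeability, $d-B_{j,k}=\sum_{j'\le j}A_{j',k}$ is hypergeometrically distributed (choosing which $d$ of the $m$ edge-slots column $k$ occupies, how many land in the first $j$), with mean $dj/m\le\alpha d$, so a Chernoff bound for the hypergeometric gives $B_{j,k}\ge d(1-\alpha)(1+\eps/2)^{-1}$ — no: we want an \emph{upper} bound on $B_{j,k}$, but $B_{j,k}\le d$ trivially, which combined with $j\le\alpha m$ only recovers $p/(1-\alpha)$. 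The resolution is that when $m\gg n$ we in fact have $j\le\alpha m=m/2$ but we should reveal fewer rows, or note $n=O(m\log d)$ forces nothing helpful here; the cleanest fix is to observe that $c$ is an arbitrary constant $<1$ and we only need $(1+\eps)c<1$, and that the bound $P_{j,k}\le p/(1-\alpha)$ combined with $p=d/m$ and the \emph{stronger} hypergeometric upper tail $d-B_{j,k}\ge dj/m-\delta d$ w.h.p.\ yields $B_{j,k}\le d(1-j/m+\delta)$, hence $P_{j,k}\le d(1-j/m+\delta)/(m-j)=p\cdot\frac{1-j/m+\delta}{1-j/m}=p(1+\tfrac{\delta}{1-j/m})\le p(1+2\delta)\le c(1+2\delta)$, and picking $\delta=\eps/2$ makes this $\le(1+\eps)c$; this hypergeometric upper-tail bound fails with probability $\exp(-\Omega(\delta^2 d))$, and a union bound over $k\le n$ and $j\le\alpha m$ costs a factor $nm$, still $o(1)$ since $d/\log(nm)\to\infty$ (which follows from $d\to\infty$ together with $n=O(m\log d)$, giving $\log(nm)=O(\log m + \log\log d)$, so one needs $d\gg\log m$ — if the paper's hypotheses don't grant that, I would instead only reveal $j\le(1-\eta)m$ rows for the upper-bound part or absorb this into the stated assumptions).

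\textbf{Main obstacle.} The delicate point is the pointwise bound $P_{j,k}\le(1+\eps)c$ in the dense regime $m\gg n$, where the crude deterministic bound $d/(m-j)$ only gives $(2+o(1))c$: closing the gap requires a Chernoff bound for the hypergeometric remaining-count $B_{j,k}$ and a union bound over all $n m$ pairs $(j,k)$, which is affordable only because $d\to\infty$ fast enough relative to $\log(nm)$ — verifying that the union bound survives under exactly the hypotheses of Theorem~\ref{thm:edge_dependent_lower_bound} (where $\hat f>1/\kappa$ has been used to deduce $n=O(m\log d)$) is the part that needs care. The martingale/maximal-inequality argument for the average lower bound $\sum_k P_{j,k}\ge(1-\eps)pn$ is comparatively routine, modulo running it on the stopped process.
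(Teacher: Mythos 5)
There is a genuine gap, and you have in fact put your finger on exactly where it is, but you have not found the idea the paper uses to close it. The obstacle you identify — that a union bound of the hypergeometric Chernoff tail over all $\Theta(m)$ row-indices $j$ costs a factor of $m$, which is affordable only when $d\gg\log m$ — is precisely the issue, and the paper's hypotheses ($d\to\infty$, $dn/m\to\infty$) do \emph{not} guarantee $d\gg\log m$. The paper avoids this with a coarse-grid/monotonicity argument (Proposition~\ref{prop:first_history_choice}): fix a column $k$, partition $[m-1]$ into $O(1/\xi)$ blocks of length $\lfloor\xi m\rfloor$, apply the hypergeometric Chernoff bound only at the $O(1/\xi)$ block endpoints $Y_q = B_{q\lfloor\xi m\rfloor,k}$, and then propagate the concentration to \emph{every} intermediate $i$ in a block using the monotonicity $Y_q\le B_{i,k}\le Y_{q-1}$ together with the fact that $\ex Y_{q-1}/\ex Y_q\le 1+\xi/(1-\alpha-\xi)$ is close to $1$. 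Taking $\lambda=\xi=\log^{-3/2}d$ gives a per-column failure probability of $O(\xi^{-1}e^{-\Omega(d\lambda^2(1-\alpha-\xi)^2)}) = e^{-\Omega(d/\log^3 d)}$, which $\to 0$ as soon as $d\to\infty$, with \emph{no} dependence on $m$. Your fallbacks do not repair this: ``only reveal $j\le(1-\eta)m$ rows'' still leaves a union bound over $\Theta(m)$ indices, and ``absorb into the assumptions'' means failing to prove the stated result.

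Two further deviations worth noting. For the average lower bound $\sum_k P_{j,k}\ge(1-\eps)pn$, your Doob/Azuma maximal-inequality approach on the stopped martingale $M_j=\sum_{j'\le j}(|e_{j'}|-\ex[|e_{j'}|\mid\scr F_{j'-1}])$ is a genuinely different route and looks like it would go through (the tail $e^{-\Omega(\eps^2 p^2 mn)}\to 0$ since $p^2 mn = d\cdot(dn/m)\to\infty$); the paper instead applies Markov's inequality to the count of ``uncontrollable'' columns, using the $e^{-\Omega(d/\log^3 d)}=o(1)$ per-column failure probability from the coarse grid, which has the advantage of reusing the same machinery needed for the pointwise bound. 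Second, for the pointwise upper bound when $d$ is small (specifically $d\le\log^2 n$), the paper sidesteps concentration entirely: the deterministic bound $P_{i,k}\le d/(m-i)\le d/((1-\alpha)m)$, combined with $1-\alpha=\Omega(1/\log d)$ and $n=O(m\log d)$, already yields $P_{i,k}=O(d\log^2 d/n)=o(1)$; the union bound over $n$ columns (not over $(j,k)$ pairs) is then invoked only when $d\ge\log^2 n$, where $d/\log^3 d\gg\log n$. Your sketch does not separate these two cases and therefore has no way to handle small $d$.
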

Now let $\Psi$ be the set of all colourings $\psi:V\to\{-1,1\}$, and pick an arbitrary $\psi\in\Psi$.
For each $i=1,\ldots,m$, let $R^\psi_i$ denote the event that $|\psi(e_i)| \le \kappa\hat{f}$, and let $R^\psi_{\le i}:=\bigcap_{j=1}^i R^\psi_j$. (By convention, $R^\psi_{\le0}=\scr H$.) Clearly, $R^\psi_i$ and $R^\psi_{\le i}$ are $\scr{F}_i$-measurable.
Note that, conditional upon any outcome of $\bm A_{1},\ldots,\bm A_{i-1}$ satisfying $Q_{i-1}$, the random variable $\psi(e_i)$ is distributed as $S_{\bm a,\bm p}$ in Section~\ref{sec:prelim} (with $a_k=\psi(v_k)$ and $p_k=\Pr[v_k\in e_i]$) and it satisfies the conditions of Lemma~\ref{lem:low_discrepancy_probability} (with $\zeta=(1+\eps)c<1$ and $[L,R]=[-\kappa\hat f,\kappa\hat f]$).
Hence, we can use that lemma to bound the conditional probability of $R^\psi_i$.
We first consider the sparse regime of $m=O(n)$.
By Lemma~\ref{lem:low_discrepancy_probability}, assuming $\kappa < 1/\left(3(c_{uni}+\sqrt{2/\pi})\right)$ and since $\kappa\hat f>1$,
\begin{equation}\label{eq:PRi1}
\mb{P}[ R^\psi_i \mid \scr{F}_{i-1}] \bm{1}_{[Q_{i-1}]}
\le \frac{c_{uni} + 2\kappa\hat f/\sqrt{2\pi}}{\sqrt{(1-\zeta)(1-\eps)pn}}
\le \frac{\kappa\hat f(c_{uni}+\sqrt{2/\pi})}{\sqrt{(1-\zeta)(1-\eps)pn}}
< 2^{-n/m} / 3.
\end{equation}
In particular, since $R^\psi_{\le i-1}\cap Q_{i-1}$ is $\scr{F}_{i-1}$-measurable and is contained in $Q_{i-1}$,
\[
\mb{P}[ R^\psi_1 ] \le 2^{-n/m} / 3
\qquad\text{and}\qquad
\mb{P}[ R^\psi_i \mid R^\psi_{\le i-1}\cap Q_{i-1}] \le 2^{-n/m} / 3
\quad\text{for } i=2,\ldots,m.
\]
Thus, for each $i=2,\ldots,m$,
\[
\mb{P}[ R^\psi_{\le i} \cap Q_{i-1}] = 
\mb{P}[ R^\psi_{i} \mid R^\psi_{\le i-1} \cap Q_{i-1}] \cdot \mb{P}[R^\psi_{\le i-1} \cap Q_{i-1}] \le
\left(2^{-n/m}/3\right) \mb{P}[R^\psi_{\le i-1} \cap Q_{i-2}],
\]
and inductively
\[
\mb{P}[ R^\psi_{\le i} \cap Q_{i-1}] \le \left(2^{-n/m}/3\right)^i.
\]
Let $t:=\lceil \alpha m\rceil$.
Next, we will bound $\disc(H)$ from below based on the discrepancies of the first $t$ rows of $\bm A$ when $Q_{t-1}$ holds.
First note that, since $t\ge nm/(n+m)$,
\begin{equation}\label{eq:PRi2}
\mb{P}[R^\psi_{\le t} \cap Q_{t-1}]
\le   \left(2^{-n/m}/3\right)^{t}
\le   \left(2^{-n/m}/3\right)^{nm/(n+m)}
= 2^{-n}  (2/3)^{nm/(n+m)} = o(2^{-n}),
\end{equation}
and then, by applying Markov's inequality to the random variable $Z\bm{1}_{ Q_{t-1}}$,
\begin{equation}\label{eq:PdiscQ}
\mb{P}[\text{$\disc(H) \le \kappa  \hat f$ and  $Q_{t -1}$}]
\le \mb{E}[Z  \bm{1}_{Q_{t-1}}]
= \sum_{\psi\in\Psi} \mb{P}[R^\psi_{\le m} \cap Q_{t-1}]
\le \sum_{\psi\in\Psi} \mb{P}[R^\psi_{\le t} \cap Q_{t-1}] = o(1).
\end{equation}
Before proceeding with the proof, we consider the dense regime of $m\gg n$. In that case, we obtain an analogue of~\eqref{eq:PRi1} by using the tighter inequality~\eqref{eqn:low_discrepancy_first_bound} in Lemma~\ref{lem:low_discrepancy_probability} instead of~\eqref{eqn:low_discrepancy_rough_bound}.
With $\zeta=(1+\eps)c<1$ and assuming that $C:=2\kappa^2/\pi<1/2$, we get
\begin{align*}
\mb{P}[ R^\psi_i \mid \scr{F}_{i-1}] \bm{1}_{[Q_{i-1}]}
&\le \frac{c_{uni}}{\sqrt{(1-\zeta)(1-\eps)pn}} + \left(1 - \exp\left(-\frac{(2\kappa\hat f)^{2}}{2\pi (1-\zeta)(1-\eps)pn} \right) \right)^{1/2}
\\
&= O(1/\sqrt{pn}) + \left(1 - \gamma^{- 2\kappa^2/\pi} \right)^{1/2}
=  1 - \Theta\left(\gamma^{-C} \right),
\end{align*}
where we used the fact that $1/\sqrt{pn}=o(\gamma^{-C})$.
Reasoning as before, we obtain the following analogue of~\eqref{eq:PRi2}:
\[
\mb{P}[ R^\psi_{\le t} \cap Q_{t-1}]
\le \left(1 - \Theta\left(\gamma^{-C} \right) \right)^t
\le \left(1 - \Theta\left(\gamma^{-C} \right) \right)^{m/2}
=  e^{ - n \Theta\left(\gamma^{-C}m/n \right)  }
= o(2^{-n}),
\]
where we used the facts that $t\ge m/2$ and $n/m=o(\gamma^{-C})$.
As a result, our bound in~\eqref{eq:PdiscQ} is also valid when $m\gg n$ as well.
Then, in either regime ($m=O(n)$ or $m\gg n$),
\begin{equation}\label{eq:almost_there}
\mb{P}[\disc(H) \le \kappa  \hat f]
\le \mb{P}[\text{$\disc(H) \le \kappa  \hat f$ and  $Q_{t -1}$}]
+ \mb{P}[\neg Q_{t -1}]
= o(1),
\end{equation}
by~\eqref{eq:PdiscQ}, Proposition~\ref{prop:typical_histories} and the fact
$Q_{\lceil \alpha m\rceil-1} \supseteq Q_{\lfloor\alpha m\rfloor}$. 
This shows that \whp\ $\disc(H) \ge \kappa\hat f$, and concludes the proof of Theorem~\ref{thm:edge_dependent_lower_bound}.
\end{proof}

\subsection{Proof of Proposition~\ref{prop:typical_histories}}\label{sec:typical_histories}

In this section, we prove Proposition~\ref{prop:typical_histories}.
For any $m\in\nat$, let $[m]:=\{1,2,\ldots,m\}$ and $[0]:=\emptyset$.
Suppose that $J \subseteq [m]$ is a fixed subset of size $0 \le j \le m$.
If $S \subseteq [m]$ is a random subset of size $d$,
then the distribution of the random variable $| S \cap J|$
is said to be \textbf{hypergeometric} with parameters $m,d$ and $j$. We
denote this distribution by $\text{Hyper}(m, d, j)$
in what follows. 
Now, $\text{Hyper}(m, d, j)$
is at least as concentrated about its expectation
as the binomial distribution, $\Bin(d, j/m)$  (see Chapter~21 in~\cite{Frieze2015} for details). As such, standard Chernoff
bounds ensure the following:
\begin{theorem}\label{thm:hypergeometric_concentration}
Suppose that $X \sim \text{Hyper}(m, d, j)$, and $\mu := \mb{E}[X] = d  j /m$. In this case, for every $0<\lambda<1$,
\[
    \mb{P}( |X - \mu| \ge \lambda  \mu) \le 2 \exp\left(\frac{- \lambda^2   \mu}{3} \right).
\]
\end{theorem}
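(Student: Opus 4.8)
The plan is to derive this multiplicative concentration bound from the standard Chernoff inequality for the binomial distribution, exploiting the comparison recalled just above: a $\text{Hyper}(m,d,j)$ variable is no less concentrated about its mean than $\Bin(d,j/m)$. Write $p := j/m$, so that $\mu = dp$, and let $Y\sim\Bin(d,p)$; note that $Y$ has the same mean $\mu$ as $X$.

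First I would record the precise form of the comparison I intend to use, namely Hoeffding's inequality for sampling with and without replacement: if $X$ is a sum of $d$ indicators obtained by sampling without replacement from a population with $j$ ones among $m$ elements, and $Y$ is the analogous sum obtained by sampling with replacement, then $\mb{E}[\phi(X)]\le\mb{E}[\phi(Y)]$ for every convex $\phi\colon\mb{R}\to\mb{R}$. Taking $\phi(x)=e^{tx}$, which is convex for each fixed $t\in\mb{R}$, this yields the moment generating function domination
\[
\mb{E}[e^{tX}] \le \mb{E}[e^{tY}] = (1-p+pe^{t})^{d}\qquad\text{for all } t\in\mb{R}.
\]

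The second step is the routine Chernoff argument, now run for $X$ but controlled through this bound. For the upper tail, fix $0<\lambda<1$ and $t>0$; Markov's inequality applied to $e^{tX}$ gives
\[
\mb{P}(X\ge(1+\lambda)\mu) \le e^{-t(1+\lambda)\mu}\,\mb{E}[e^{tX}] \le e^{-t(1+\lambda)\mu}(1-p+pe^{t})^{d},
\]
and optimizing over $t$ (the choice $t=\log(1+\lambda)$) reduces the right-hand side to $\bigl(e^{\lambda}/(1+\lambda)^{1+\lambda}\bigr)^{\mu}$, which is at most $\exp(-\lambda^{2}\mu/3)$ by the elementary estimate $(1+\lambda)\log(1+\lambda)-\lambda\ge\lambda^{2}/3$ valid on $[0,1)$. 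For the lower tail I would run the symmetric argument with $t<0$ (equivalently, apply the same reasoning to $-X$, whose moment generating function is dominated by that of $-Y$), obtaining the stronger estimate $\mb{P}(X\le(1-\lambda)\mu)\le\exp(-\lambda^{2}\mu/2)\le\exp(-\lambda^{2}\mu/3)$. A union bound over the two tails then produces the claimed $\mb{P}(|X-\mu|\ge\lambda\mu)\le 2\exp(-\lambda^{2}\mu/3)$.

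The only ingredient that is not a textbook Chernoff computation is the convex-order domination of $X$ by $Y$; since this is exactly the comparison stated in Chapter~21 of~\cite{Frieze2015} (due to Hoeffding), I would simply cite it, and then there is essentially no obstacle — the statement is the standard binomial Chernoff bound transported through the moment generating function inequality. Were one to insist on a self-contained argument, the one mildly delicate point would be proving that domination, which is cleanest by conditioning: reveal the $d$ sampled elements one at a time and compare, draw by draw, the conditional moment generating function of each new indicator in the without-replacement process with that of an independent $\Ber(p)$, using the convexity of $t\mapsto e^{tx}$ to handle the fluctuating conditional success probabilities.
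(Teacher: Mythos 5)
Your proof is correct and follows exactly the route the paper itself takes: the paper gives no written proof, merely citing Hoeffding's comparison between sampling with and without replacement (via Chapter~21 of~\cite{Frieze2015}) and invoking ``standard Chernoff bounds,'' which is precisely the MGF-domination plus Chernoff computation you carry out. Your write-up simply makes explicit the steps the paper leaves to the reference, and all the estimates (the choice $t=\log(1+\lambda)$, the bound $(1+\lambda)\log(1+\lambda)-\lambda\ge\lambda^{2}/3$ on $[0,1)$, and the stronger lower-tail constant) are accurate.
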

Let $\bm A$ be the adjacency matrix of $H\sim\scr H(n,m,d)$, which has exactly $d$ ones in each column at random positions.
Let $p=d/m$.
Recall that, for each $i=0,\ldots,m$ and $k=1,\ldots,n$, the random variable $B_{i,k}$ counts the number of ones in the $k$-th column and below the $i$-th row of $\bm A$ (cf.~\eqref{eqn:matrix_random_variables}).
Also recall $P_{i,k}:=B_{i,k}/(m-i)$ (for $i<m$).
Clearly, $B_{i,k} \sim \text{Hyper}(m, d, m-i)$, so we may apply
Theorem~\ref{thm:hypergeometric_concentration} to control the value of $B_{i,k}$,
and thus of $P_{i,k}$.
Let $\alpha := \max\{n/(n+m),1/2\}$ and $t:=\lceil \alpha m\rceil$.
For a fixed column $k$, our goal is to show that \whp\ $B_{i,k}$ remains ``close'' to $\mb{E}[B_{i,k}] = d(m-i)/m$ for all $i=1,\ldots,t$.
By combining the error term in Theorem~\ref{thm:hypergeometric_concentration} with a na\"\i ve union bound, we can bound the probability of failure by something of the order of $m\exp(\Theta(-d))$,
which does not tend to $0$ unless $d = \Omega(\log m)$.
To overcome this, we need a more subtle argument in which we take the union bound over a smaller set of indices $i$ and take into account that $B_{i,k}$ does not change too much between two consecutive values of $i$.
This is made more precise in the following claim:
\begin{prop}\label{prop:first_history_choice}
Assume $0<\alpha,\lambda,\xi<1$ with $\xi\ge1/m$ and $\alpha+\xi<1$.
Fix $1 \le k \le n$. Then, with probability at least
\[
1 - 8\xi^{-1}  \exp\left(\frac{-d\lambda^2(1-\alpha-\xi)^2}{3} \right),
\]
it holds that, for all $i=0, \ldots, \lfloor \alpha m\rfloor$,
\begin{equation}\label{eq:Pconcentration}
(1-\lambda) \left(1 + \frac{\xi}{1-\alpha-\xi}\right)^{-1} p
\le P_{i,k} \le
(1+\lambda) \left(1 + \frac{\xi}{1-\alpha-\xi}\right) p.
\end{equation}
\end{prop}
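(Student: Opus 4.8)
The plan is to avoid a direct union bound over all rows $i\in\{0,\ldots,\lfloor\alpha m\rfloor\}$ (which would only succeed when $d=\Omega(\log m)$) by controlling $P_{i,k}$ at a sparse set of \emph{checkpoint} rows and then interpolating, exploiting that $i\mapsto B_{i,k}$ is non-increasing.

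First I would fix the checkpoint spacing $r:=\lfloor \xi m\rfloor$, which is at least $1$ because $\xi\ge 1/m$, and set $i_\ell:=\ell r$ for $\ell=0,1,\ldots,L$, where $L$ is the least integer with $i_L\ge\lfloor\alpha m\rfloor$. Elementary estimates then give $i_L\le\lfloor\alpha m\rfloor+r\le(\alpha+\xi)m$ and $L<4/\xi$ (checking the cases $\xi m\ge 2$ and $1\le\xi m<2$ separately). For $\ell\ge1$ we have $B_{i_\ell,k}\sim\text{Hyper}(m,d,m-i_\ell)$ with mean $\mu_\ell=p(m-i_\ell)\ge d(1-\alpha-\xi)$, so Theorem~\ref{thm:hypergeometric_concentration}, applied with the given $\lambda\in(0,1)$, gives $\mb{P}(|B_{i_\ell,k}-\mu_\ell|\ge\lambda\mu_\ell)\le 2\exp(-\lambda^2\mu_\ell/3)\le 2\exp(-d\lambda^2(1-\alpha-\xi)^2/3)$, the last step using $0<1-\alpha-\xi<1$. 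Since $P_{0,k}=B_{0,k}/m=d/m=p$ deterministically, a union bound over $\ell=1,\ldots,L$ shows that with probability at least $1-8\xi^{-1}\exp(-d\lambda^2(1-\alpha-\xi)^2/3)$ the event $\mathcal{E}$ holds, where $\mathcal{E}$ is the event that $(1-\lambda)p\le P_{i_\ell,k}\le(1+\lambda)p$ for every $\ell=0,1,\ldots,L$ (divide $|B_{i_\ell,k}-\mu_\ell|\le\lambda\mu_\ell$ by $m-i_\ell$).

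It then remains to deduce \eqref{eq:Pconcentration} on the event $\mathcal{E}$. Given $0\le i\le\lfloor\alpha m\rfloor$, I would pick $\ell$ with $i_\ell\le i\le i_{\ell+1}$ and $\ell+1\le L$ (possible since $i_0=0$ and $i_L\ge\lfloor\alpha m\rfloor$). Monotonicity of $B_{\cdot,k}$ together with $m-i_{\ell+1}\le m-i\le m-i_\ell$ yields
\[
\begin{aligned}
\frac{m-i_{\ell+1}}{m-i_\ell}\,P_{i_{\ell+1},k}
&=\frac{B_{i_{\ell+1},k}}{m-i_\ell}
\le P_{i,k}=\frac{B_{i,k}}{m-i}
\le\frac{B_{i_\ell,k}}{m-i_{\ell+1}}
=\frac{m-i_\ell}{m-i_{\ell+1}}\,P_{i_\ell,k},
\end{aligned}
\]
and since $i_{\ell+1}-i_\ell=r\le\xi m$ and $m-i_{\ell+1}\ge m-i_L\ge m(1-\alpha-\xi)$,
\[
1\le\frac{m-i_\ell}{m-i_{\ell+1}}=1+\frac{i_{\ell+1}-i_\ell}{m-i_{\ell+1}}\le 1+\frac{\xi}{1-\alpha-\xi}.
\]
Combining this with the bounds $(1-\lambda)p\le P_{i_\ell,k},P_{i_{\ell+1},k}\le(1+\lambda)p$ from $\mathcal{E}$ gives exactly \eqref{eq:Pconcentration}.

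The main obstacle is the bookkeeping in the last two paragraphs: the checkpoint spacing $r$ must be chosen so that simultaneously (i) consecutive gaps are at most $\xi m$, (ii) the last checkpoint lies at most $(\alpha+\xi)m$, which is what bounds the denominators $m-i_{\ell+1}$ below by $m(1-\alpha-\xi)$ and the means $\mu_\ell$ below by $d(1-\alpha-\xi)$, and (iii) the number of checkpoints is at most $4\xi^{-1}$, producing the constant $8$ in the failure probability. Everything else --- the Chernoff estimate and the monotonicity interpolation --- is routine given Theorem~\ref{thm:hypergeometric_concentration}.
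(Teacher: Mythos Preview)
Your proposal is correct and follows essentially the same approach as the paper: sample $B_{i,k}$ at $O(1/\xi)$ evenly spaced checkpoint rows, apply the hypergeometric Chernoff bound (Theorem~\ref{thm:hypergeometric_concentration}) at each checkpoint, union bound, and then interpolate between consecutive checkpoints using the monotonicity of $i\mapsto B_{i,k}$ together with the ratio bound $(m-i_\ell)/(m-i_{\ell+1})\le 1+\xi/(1-\alpha-\xi)$. The only cosmetic differences are notational (the paper calls the spacing $\lfloor\xi m\rfloor$ and the number of checkpoints $r_0$, while you call them $r$ and $L$) and that the paper bounds the $B_i$'s first and then divides by $m-i$, whereas you work directly with $P_{i,k}$; the content is the same.
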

\begin{proof}
As the columns of $\bm{A}$ are identically distributed, we may assume that $k=1$
in what follows. We thus drop the index $k$ from the notation of $A_{i,k},B_{i,k},P_{i,k}$ for simplicity.
Recall $B_i \sim \text{Hyper}(m, d, m-i)$ with $\ex B_i = p(m-i)$ for each $i=0,\ldots,m$.

Let $r:= \lceil (m-1)/\lfloor\xi m\rfloor\rceil$, which satisfies $1\le r\le m-1$ by assumption. Our first goal is to partition the set $[m-1]$ into $r$ intervals, each of size at most $\xi m$.
For each $q=0,\ldots,r-1$ let $I_q := [q \lfloor\xi m\rfloor]$, and let $I_r:= [m-1]$.
Then, setting $\til{I}_{q}:= I_{q} \setminus I_{q-1}$ for $q =1, \ldots, r$, gives us the desired partition $\til I_1,\ldots,\til I_r$ of $[m-1]$.
Now, let $r_0 := \lceil \lfloor\alpha m\rfloor/\lfloor\xi m\rfloor\rceil$.
Since $r_0 \lfloor\xi m\rfloor \ge \lfloor\alpha m\rfloor$, the set $[\lfloor\alpha m\rfloor]$ is contained in $\bigcup_{q=1}^{r_0}\til I_q$.
Clearly, $r_0\le r$ since $\lfloor\alpha m\rfloor\le m-1$. If $r_0=r$, then $(m-1) - \lfloor\alpha m\rfloor < \lfloor\xi m\rfloor$, which implies
$\lfloor\alpha m\rfloor + \lfloor\xi m\rfloor \ge m$ by integrality. This contradicts the fact that
$\lfloor\alpha m\rfloor + \lfloor\xi m\rfloor \le (\alpha+\xi)m < m$. As a result, $r_0\le r-1$.

For each $0\le q\le r-1$, define $Y_q:= B_{q\lfloor\xi m\rfloor}$ and let $Y_r:= B_{m-1}$. In other words, each $Y_q$ counts the number of ones in the $k$-th column of $\bm A$ below all the rows indexed by $I_q$.
We will prove that the variables $Y_0,\ldots,Y_{r_0}$ are concentrated around their mean, and from that derive a concentration result for $B_0,\ldots,B_{\lfloor\alpha m\rfloor}$.
Observe that $Y_r$ must be defined slightly differently, due to divisibility issues.
Fortunately, our argument will only require the analysis of $Y_0,\ldots,Y_{r_0}$, where $r_0\le r-1$, so this fact will cause no trouble.
For $q=0,\ldots,r-1$,
%
%\begin{equation}\label{eq:EYq}
\[
\mb{E} Y_q= p(m - |I_{q}|) = p (m-q\lfloor\xi m\rfloor),
\]
%\end{equation}
%
and therefore, for every $q=1,\ldots,r_0$,
\begin{equation}\label{eq:EQrat}
\frac{\ex Y_{q-1}}{\ex Y_q}
= 1 + \frac{\lfloor\xi m\rfloor}{m-q\lfloor\xi m\rfloor}
\le 1 + \frac{\lfloor\xi m\rfloor}{m - \lfloor\alpha m\rfloor - \lfloor\xi m\rfloor}
\le 1 + \frac{\xi}{1-\alpha-\xi},
\end{equation}
where we also used the fact that
$r_0 \lfloor\xi m\rfloor \le \lfloor\alpha m\rfloor+\lfloor\xi m\rfloor$.
Now, let $E$ be the event that
\[
|Y_q - \ex Y_q| \le \lambda  \ex Y_q
\qquad\text{for all }q=0,\ldots,r_0.
\]
A direct application of Theorem~\ref{thm:hypergeometric_concentration} yields
\[
\mb{P}(\neg E)
%\le \sum_{q=0}^{r_0} \mb{P} ( |Y_q - \mb{E}Y_q| > \lambda \mb{E}Y_q)
\le \sum_{q=0}^{r_0} 2 \exp \left( \frac{-\lambda^2 p^2 (m-q\lfloor\xi m\rfloor)^2}{3d} \right)
\le 2(r_0+1) \exp \left( \frac{-\lambda^2 p^2 (m-r_0\lfloor\xi m\rfloor)^2}{3d} \right).
\]
Using the fact that $r_0 \lfloor\xi m\rfloor \le (\alpha +\xi) m$ and the rough bound
$r_0+1 \le \frac{\alpha m}{\xi m/2} + 2 \le \frac{4}{\xi}$,
we conclude that
\begin{equation}\label{eq:PnotE}
\mb{P}(\neg E)
\le (8/\xi) \exp \left( \frac{-d \lambda^2 (1-\alpha-\xi)^2}{3} \right).
\end{equation}
Finally, we turn our attention to $B_1,\ldots,B_{\lfloor\alpha m\rfloor}$.
For each $i \in [\lfloor\alpha m\rfloor]$, we pick $q\in [r_0]$ such that $i\in\til I_q$. Then, by monotonicity,
\[
Y_q \le B_i \le Y_{q-1}
\qquad\text{and}\qquad  
 \mb{E} Y_q \le \mb{E} B_i \le \mb{E} Y_{q-1}.
\]
Combining this with~\eqref{eq:EQrat} yields
\[
\ex Y_{q-1}
\le \left(1 + \frac{\xi}{1-\alpha-\xi}\right) \ex B_i
\qquad\text{and}\qquad
\ex Y_q
\ge \left(1 + \frac{\xi}{1-\alpha-\xi}\right)^{-1} \ex B_i.
\]
As a result, event $E$ implies that for every $1\le i\le\lfloor\alpha m\rfloor$,
\[
(1-\lambda) \ex Y_q \le
Y_q \le B_i \le Y_{q-1} \le (1+\lambda) \ex Y_{q-1},
\]
and hence
\begin{equation}\label{eq:Bconcentration}
(1-\lambda) \left(1 + \frac{\xi}{1-\alpha-\xi}\right)^{-1} \ex B_i
\le B_i \le
(1+\lambda) \left(1 + \frac{\xi}{1-\alpha-\xi}\right) \ex B_i.
\end{equation}
(Note that the equation above is also valid for $i=0$, since $B_0=d=\ex B_0$.)
Dividing~\eqref{eq:Bconcentration} by $m-i$, we conclude that event $E$ implies that, for every $0\le i\le\lfloor\alpha m\rfloor$,
\[
(1-\lambda) \left(1 + \frac{\xi}{1-\alpha-\xi}\right)^{-1} p
\le P_i \le
(1+\lambda) \left(1 + \frac{\xi}{1-\alpha-\xi}\right) p.
\]
Our bound on $\mb P(\neg E)$ in~\eqref{eq:PnotE} completes the proof of the proposition.
\end{proof}

\begin{corollary}\label{cor:typical_histories}
Suppose $m\ge d\to\infty$ and $n = O(m\log d)$ as $n\to\infty$.
Set $p=d/m$ and $\alpha = \max\{n/(n+m),1/2\}$.
Given any fixed constant $0<\eps<1$ and any $1\le k\le n$,
the following holds with probability at least
$1 - \exp\left(-\Omega(d/\log^3d)\right)$.
For every $i=0,\ldots,\lfloor\alpha  m\rfloor$,
\begin{equation}\label{eqn:corollary_typical_history}
|P_{i,k} - p| \le \eps  p.
\end{equation}
\end{corollary}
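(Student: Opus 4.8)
The plan is to derive Corollary~\ref{cor:typical_histories} as an immediate consequence of Proposition~\ref{prop:first_history_choice} by choosing the free parameters $\lambda$ and $\xi$ appropriately in terms of $\eps$, $d$ and $m$. First I would fix $k$ (irrelevant by symmetry of the columns) and observe that the factor $(1+\xi/(1-\alpha-\xi))$ appearing on both sides of~\eqref{eq:Pconcentration} converges to $1$ as $\xi\to0$, while the factors $(1\pm\lambda)$ converge to $1$ as $\lambda\to0$; hence to force $|P_{i,k}-p|\le\eps p$ it suffices to pick $\lambda$ and $\xi$ as suitable small constants times a function of $\eps$. The one subtlety is that the failure probability bound $8\xi^{-1}\exp(-d\lambda^2(1-\alpha-\xi)^2/3)$ must be $\exp(-\Omega(d/\log^3 d))$, and here we must be careful because $\alpha$ may be as large as $n/(n+m)$, which under the hypothesis $n=O(m\log d)$ satisfies $1-\alpha=\Omega(1/\log d)$ but may tend to $0$.

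The key steps, in order, are as follows. (1) Set $\xi$ so that $\alpha+\xi<1$ with room to spare, say $\xi:=c(1-\alpha)$ for a small absolute constant $c$; then $1-\alpha-\xi=(1-c)(1-\alpha)=\Omega(1/\log d)$ and $\xi/(1-\alpha-\xi)=c/(1-c)$ is a small constant, which we make small enough (in terms of $\eps$) by choosing $c$ small. We also need $\xi\ge1/m$, which holds for large $n$ since $1-\alpha=\Omega(1/\log d)$ and $m\to\infty$. (2) Choose $\lambda$ to be a sufficiently small constant depending only on $\eps$ so that $(1+\lambda)(1+c/(1-c))\le1+\eps$ and $(1-\lambda)(1+c/(1-c))^{-1}\ge1-\eps$; this makes~\eqref{eq:Pconcentration} imply~\eqref{eqn:corollary_typical_history} on the event of Proposition~\ref{prop:first_history_choice}. (3) Plug these choices into the failure probability: $\lambda$ is a constant, $(1-\alpha-\xi)^2=\Omega(1/\log^2 d)$, so $d\lambda^2(1-\alpha-\xi)^2/3=\Omega(d/\log^2 d)$; and the prefactor $8\xi^{-1}=O(\log d)=\exp(O(\log\log d))$ is absorbed, giving a failure probability of $\exp(-\Omega(d/\log^2 d))=\exp(-\Omega(d/\log^3 d))$, as claimed.

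I expect the only genuine obstacle to be bookkeeping around the parameter $\alpha$: one must confirm that $1-\alpha$ does not decay faster than polylogarithmically in $d$, which is exactly where the hypothesis $n=O(m\log d)$ enters (it gives $n/(n+m)\le1-\Omega(1/\log d)$ when $n\ge m$, and $\alpha=1/2$ otherwise). Everything else is a routine choice of constants and a substitution into the bound of Proposition~\ref{prop:first_history_choice}. There is also a minor point that the hypothesis states $m\ge d$, which together with $d\to\infty$ ensures $m\to\infty$, so that the conditions $0<\alpha,\lambda,\xi<1$, $\xi\ge1/m$, and $\alpha+\xi<1$ of the proposition are all met for $n$ large enough; I would verify these explicitly but briefly before invoking the proposition.
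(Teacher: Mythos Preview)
Your approach is correct and in fact slightly cleaner than the paper's. The paper instead sets $\lambda=\xi=1/\log^{3/2}d$, letting both tend to $0$ so that the multiplicative distortion in~\eqref{eq:Pconcentration} is $1+o(1)$ and hence eventually at most $1+\eps$ for any fixed $\eps$; plugging in, it obtains the exponent $d\lambda^2(1-\alpha-\xi)^2/3$ with $\lambda^2=1/\log^3 d$. Your choice of a constant $\lambda$ and $\xi=c(1-\alpha)$ keeps the distortion factor $1+c/(1-c)$ bounded by a constant that you tune against $\eps$ directly, and buys a larger exponent $\Omega(d/\log^2 d)$ rather than the paper's target $\Omega(d/\log^3 d)$. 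It is also worth noting that the paper's displayed bound tacitly replaces $1-\alpha$ by $1/2$ in the exponent, which is only valid when $n\le m$; in the regime $n>m$ its choice of $\lambda$ would actually yield $\exp(-\Omega(d/\log^5 d))$. Your parametrisation avoids this slip because you track $(1-\alpha-\xi)=(1-c)(1-\alpha)=\Omega(1/\log d)$ explicitly, which is precisely where the hypothesis $n=O(m\log d)$ is used. The remaining checks you list ($\xi\ge1/m$, $\alpha+\xi<1$, absorbing the $\log d$ prefactor) are routine and correct.
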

\begin{proof}
Since the probability bound in the statement is asymptotic as $n\to\infty$, we will implicitly assume throughout the proof that $n$ is sufficiently large for all the inequalities therein to be valid.
First, define $\lambda := \xi := 1/ \log^{3/2}d$. Clearly, $\xi \ge 1/d \ge 1/m$.
Observe that, since $n = O(m\log d)$, we have
\begin{equation}\label{eqn:1minusalpha}
1-\alpha = \min \{ m/(n+m), 1/2 \}  = \Omega(1/\log d).
\end{equation}
In particular $\alpha+\xi<1$, and thus all the assumptions in Proposition~\ref{prop:first_history_choice} are satisfied.
Moreover,
\[
\frac{\xi}{1-\alpha-\xi} = O\left(\frac{1/\log^{3/2}d}{1/\log d}\right) = O(\log^{-1/2} d).
\]
As a result, we can relax the inequalities in~\eqref{eq:Pconcentration} to
\[
P_{i,k} = \left(1 + O(\log^{-3/2}d) \right) \left( 1 + O(\log^{-1/2}d) \right) p = (1+o(1)) p,
\]
which implies that $|P_{i,k} - p| \le \eps  p$ (eventually for $n$ sufficiently large).
In view of Proposition~\ref{prop:first_history_choice}, this fails for some $i=0,\ldots,\lfloor\alpha  m\rfloor$ with probability at most
\[
8\xi^{-1}  \exp\left(\frac{-d\lambda^2(1-\alpha-\xi)^2}{3} \right)
\le 8(\log^{3/2}d) \exp\left(\frac{-d (1/2-\log^{-3/2}d)^2}{3\log^3 d} \right)
\]
\[
= \exp\left(-\Omega(d/\log^3 d) \right).
\]
This finishes the proof of the corollary.
\end{proof}

Now we are ready to prove Proposition~\ref{prop:typical_histories}, which we restate below in a more explicit form for convenience.
\begin{prop}
Let $0<\eps,c<1$ be fixed constants with $(1+\eps)c<1$. Assume that $d\to\infty$, $dn/m\to\infty$ and $n = O(m\log d)$ as $n\to\infty$, and suppose that $p=d/m\le c$. Let $\alpha = \max\{n/(n+m),1/2\}$.
Then \whp, for every  $i=0,\ldots,\lfloor\alpha m\rfloor$, 
\begin{equation}\label{eqn:lower_bound_average}
\sum_{k=1}^{n} P_{i,k} \ge (1-\eps) pn,
\end{equation}
and
\begin{equation} \label{eqn:column_upper_bound}
P_{i,k} \le (1+\eps)c
\quad\text{for } k=1,\ldots, n.
\end{equation}
\end{prop}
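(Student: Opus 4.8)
The plan is to derive the statement from the single-column estimate of Corollary~\ref{cor:typical_histories} by a union bound over the $n$ columns. The subtlety is that this union bound is too lossy on its own: the per-column failure probability $\exp(-\Omega(d/\log^3 d))$ does not beat a factor of $n$ when $d$ is only, say, polylogarithmic in $n$. The escape route is the deterministic bound
$P_{i,k}=B_{i,k}/(m-i)\le d/((1-\alpha)m)=p/(1-\alpha)$,
valid for every $i\le\lfloor\alpha m\rfloor$ and every $k$ because each column of $\bm{A}$ has exactly $d$ ones and $m-i\ge(1-\alpha)m$; recall also, as in the proof of Corollary~\ref{cor:typical_histories}, that the hypothesis $n=O(m\log d)$ forces $1-\alpha=\Omega(1/\log d)$. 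I would split into two cases accordingly.

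\textbf{Case 1: $p/(1-\alpha)\le(1+\eps)c$.} Then~\eqref{eqn:column_upper_bound} holds with certainty, so only~\eqref{eqn:lower_bound_average} needs an argument, and for that I would not union-bound over columns. The key observation is that $\sum_{k=1}^{n}B_{i,k}$ is a sum of $n$ independent random variables taking values in $[0,d]$, with mean $pn(m-i)$, and it is non-increasing in $i$. So I would reuse the interval partition $\widetilde I_1,\dots,\widetilde I_r$ of $[m-1]$ from Proposition~\ref{prop:first_history_choice}, now taking $\xi,\lambda$ to be small constant multiples of $\min\{1-\alpha,\eps\}$, apply a Chernoff/Hoeffding bound to $\sum_k B_{q\lfloor\xi m\rfloor,k}$ at each of the $O(1/\xi)$ relevant endpoints, and interpolate by monotonicity exactly as in that proof. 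This gives $\sum_k P_{i,k}\ge(1-\eps)pn$ simultaneously for all $i\le\lfloor\alpha m\rfloor$ with failure probability $O(1/\xi)\exp(-\Omega(\lambda^2 n(1-\alpha)^2))$, which tends to $0$: when $1-\alpha$ is bounded below by a constant this is immediate since $n\to\infty$, and in the only subregime where $1-\alpha$ is not ($m=o(n)$, so $1-\alpha=\Theta(m/n)$) one checks that $n$ dominates $\log(1/\xi)/(1-\alpha)^2$, which is only a fixed power of $\log d$ (and hence of $n$).

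\textbf{Case 2: $p/(1-\alpha)>(1+\eps)c$.} Then $p>(1+\eps)c(1-\alpha)=\Omega(1/\log d)$, so $m=d/p=O(d\log d)$ and therefore $n=O(m\log d)=O(d\log^2 d)$, i.e.\ $\log n=O(\log d)$. Now I apply Corollary~\ref{cor:typical_histories} to each column $k=1,\dots,n$ (with its free constant set to our $\eps$): for each $k$, with probability at least $1-\exp(-\Omega(d/\log^3 d))$ we have $|P_{i,k}-p|\le\eps p$ for all $i\le\lfloor\alpha m\rfloor$, which simultaneously yields $P_{i,k}\le(1+\eps)p\le(1+\eps)c$ (so~\eqref{eqn:column_upper_bound}) and, after summing over $k$, $\sum_k P_{i,k}\ge(1-\eps)pn$ (so~\eqref{eqn:lower_bound_average}). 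A union bound over the $n$ columns fails with probability at most $n\exp(-\Omega(d/\log^3 d))=\exp(O(\log d)-\Omega(d/\log^3 d))=o(1)$, since $d/\log^3 d=\omega(\log d)$.

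The step I expect to be most delicate is not any individual estimate but this interplay: Corollary~\ref{cor:typical_histories} by itself is not enough, and the argument only closes because in exactly the regime where its union bound is hopeless ($d$, hence $p$, very small) the upper-bound condition~\eqref{eqn:column_upper_bound} degenerates to a deterministic inequality, leaving only~\eqref{eqn:lower_bound_average}, which concerns a single sum of $n$ independent bounded variables and thus concentrates at rate $\exp(-\Omega(n\cdot\mathrm{poly}(1-\alpha)))\to0$ regardless of how slowly $d$ grows. Carefully tracking the admissible ranges of $i$, the relation between $\xi,\lambda$ and $1-\alpha$, and the two parameter regimes $m=O(n)$ versus $m\gg n$ is where the bookkeeping will be heaviest.
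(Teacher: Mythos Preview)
Your proposal is correct, but it takes a noticeably more laborious route than the paper, and the extra work stems from missing one simple observation.

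For the average lower bound~\eqref{eqn:lower_bound_average}, the paper does \emph{not} do any new concentration argument on the row sums $\sum_k B_{i,k}$. Instead it calls a column $k$ \emph{controllable} if the conclusion of Corollary~\ref{cor:typical_histories} (with $\eps_0=\eps/2$) holds for that column, and lets $U$ be the set of uncontrollable columns. Since the per-column failure probability in the corollary is $o(1)$, one has $\mb E|U|=o(n)$, so by Markov's inequality $|U|\le\eps_0 n$ \whp\ Then simply discarding the uncontrollable columns gives
\[
\sum_{k=1}^n P_{i,k}\ \ge\ (n-|U|)(1-\eps_0)p\ \ge\ (1-\eps_0)^2 pn\ \ge\ (1-\eps)pn
\]
for every $i\le\lfloor\alpha m\rfloor$, uniformly in all regimes and with no case split. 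Your Hoeffding-on-the-row-sum plus interval-partition argument in Case~1 is valid, but it reproduces from scratch a conclusion that Markov's inequality delivers in one line; the point you identified as ``most delicate'' (Corollary~\ref{cor:typical_histories} alone is not enough for a union bound over columns) simply does not arise for~\eqref{eqn:lower_bound_average}, because no union bound is needed there.

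For the per-column upper bound~\eqref{eqn:column_upper_bound}, both proofs split into a deterministic case and a union-bound case, but with different thresholds: the paper splits on $d\le\log^2 n$ versus $d\ge\log^2 n$ (in the former, $P_{i,k}\le d/((1-\alpha)m)=O(d\log^2 d/n)=o(1)$ deterministically; in the latter, $n\exp(-\Omega(d/\log^3 d))=o(1)$ directly), whereas you split on whether $p/(1-\alpha)\le(1+\eps)c$. Your dichotomy is equally valid and your Case~2 calculation ($p=\Omega(1/\log d)\Rightarrow n=O(d\log^2 d)\Rightarrow\log n=O(\log d)$) is correct; the paper's threshold is just a bit more direct to verify.
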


\begin{proof}
For $k=1,\ldots,n$, we say that column $k$ of $\bm{A}$ is \textit{controllable} if, for every $i=0,\ldots,\lfloor\alpha m\rfloor$, it holds
that
\[
    |P_{i,k}-p| \le \eps_0  p,
\]
where $\eps_0 := \eps/2$. Let $U\subseteq [n]$ denote the set of indices of the \textit{uncontrollable} columns. Then, by Corollary~\ref{cor:typical_histories} (with $\eps$ replaced by $\eps_0$),
\[
\mb{E} |U| \le  n  \exp\left(-\Omega(d/\log^3 d)\right) = o(n).
\]
Hence, we can apply Markov's inequality to ensure that $|U|/n \le \eps_0$ {\whp}
On the other hand, by applying the trivial lower bound to $P_{i,k}$ for each controllable column $k \in [n]$,
\begin{align*}
\sum_{k =1}^{n} P_{i,k}
\ge (n - |U|) (1- \eps_0)  p
\ge (1 - \eps_0)^2  pn
\ge (1 - 2\eps_0)  pn,
\end{align*}
\whp, thereby proving \eqref{eqn:lower_bound_average} (as $2  \eps_{0} = \eps$).

In order to verify that \eqref{eqn:column_upper_bound} holds, we first consider the regime in which $d \le \log^2 n$.
Observe then that \emph{deterministically}
\[
P_{i,k} \le \frac{d}{m-i} \le \frac{d}{(1 -\alpha)  m},
\]
for each $i=1,\ldots,\lfloor\alpha m\rfloor$ and $k=1,\ldots,n$. In particular,
since $1-\alpha = \Omega(\log^{-1} d)$ (in view of~\eqref{eqn:1minusalpha}) and $n=O(m\log d)$,
it holds that
\[
P_{i,k} = O(d(\log^2 d)/n) = o(1).
\]
Thus, \eqref{eqn:column_upper_bound} holds in this regime, as $c(1+\eps)>0$ is a fixed constant.
On the other hand, if $d \ge \log^2 n$, we can apply Corollary~\ref{cor:typical_histories} again, which ensures that with probability at least
\[
1 - n \exp\left(-\Omega(d/\log^3 d)\right) = 1-o(1)
\]
we have
\[
P_{i,k} \le (1 + \eps)  p \le (1 + \eps) c
\]
for all $i=0,\ldots,\lfloor\alpha m\rfloor$ and $k=1,\ldots,n$. The proof is therefore complete.
\end{proof}

\section{Upper Bounding Discrepancy---Proof of Theorem~\ref{thm:dense_setting}}\label{sec:upper-bound}

\sv{\toappendix{\section{Additions to Section~\ref{sec:upper-bound}}\label{app:color}}}

% \subsection {The Dense Regime of $m \gg n$}

% In this section, we prove Theorem~\ref{thm:dense_setting}.
The main tool
we make use of is the algorithmic partial colouring lemma~\cite{Lovett_2012},
as done in~\cite{Bansal_2018,Potukuchi_2018,Potukuchi_2019}. For convenience,
we restate this lemma in the relevant hypergraph terminology, where
we define a \textbf{fractional colouring} to be a relaxation of a (hypergraph) colouring,
whose values are allowed to lie in the interval $[-1,1]$:

\begin{lemma}[Partial Colouring Lemma~\cite{Lovett_2012}] \label{lem:partial_colouring}
Suppose that $H=(V,E)$ is a hypergraph with $m$ edges and $n$ vertices which
are coloured by some  fractional colouring $\rho : V \rightarrow [-1,1]$.
Moreover, assume that $\delta > 0$ and $(\lambda_e)_{e \in E}$ are non-negative values such that
\lv{%
\begin{equation} \label{eqn:partial_colouring}
    \sum_{e \in E} \exp( - \lambda_{e}^{2}/16) \le \frac{n}{16}.
\end{equation}
}%
\sv{%
$    \sum_{e \in E} \exp( - \lambda_{e}^{2}/16) \le \frac{n}{16}.$
}%
Under these assumptions, there exists some fractional colouring $\psi: V \rightarrow [-1,1]$ for which
\begin{enumerate}
    \item $|\psi(e) - \rho(e)| \le \lambda_{e}  |e|^{1/2}$ for all $e \in E$, and
    \item $|\psi(v)| \ge 1 - \delta$ for at least $n/2$ vertices of $V$.
\end{enumerate}
Moreover, $\psi$ can be computed by a randomized algorithm in expected time
\lv{\[}
  \sv{$}
O((n+m)^{3}  \delta^{-2}  \log(m  n/ \delta)).
  \sv{$}
\lv{\]}
\end{lemma}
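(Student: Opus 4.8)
The plan is to prove Lemma~\ref{lem:partial_colouring} by the ``walking on the edges'' argument of Lovett and Meka. Put $v_e:=\bm 1_e$, so $\|v_e\|_2=|e|^{1/2}$ and $\psi(e)-\rho(e)=\langle v_e,\psi-\rho\rangle$; discarding the (harmless) edges with $v_e=\bm 0$ and replacing each remaining $v_e$ by the unit vector $v_e/\|v_e\|_2$, the lemma becomes the statement: given $x_0:=\rho\in[-1,1]^n$, unit vectors $(v_e)_{e\in E}$ and reals $\lambda_e\ge 0$ with $\sum_e\exp(-\lambda_e^2/16)\le n/16$, find $x\in[-1,1]^n$ with $|\langle v_e,x-x_0\rangle|\le\lambda_e$ for every $e$ and $|x_i|\ge 1-\delta$ for at least $n/2$ indices $i$ (we may assume $0<\delta<1$, the case $\delta\ge 1$ being trivial). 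Fix a $\gamma>0$ that is polynomially small in $m,n,1/\delta$ (its value is optimised at the very end), and let $\rho_0$ be twice a high-probability upper bound $c\gamma\sqrt{\log(mn/\delta)}$ on the change of a single coordinate or of a single $\langle v_e,\cdot\rangle$ in one step of the walk below; take $\gamma$ small enough that this bound is also $<\delta$. Starting from $x^{(0)}:=x_0$, run exactly $T:=8/\gamma^2$ steps (choosing $\gamma$ so that $T\in\N$) of the following random walk. At step $t$, set $C^{(t)}_{\mathrm{var}}:=\{i:|x^{(t)}_i|\ge 1-\delta\}$ and $C^{(t)}_{\mathrm{disc}}:=\{e:|\langle v_e,x^{(t)}-x_0\rangle|\ge\lambda_e-\rho_0\}$, let $W^{(t)}$ be the orthogonal complement of $\{\bm e_i:i\in C^{(t)}_{\mathrm{var}}\}\cup\{v_e:e\in C^{(t)}_{\mathrm{disc}}\}$, draw $g^{(t)}\sim N(0,I_n)$ independently of $\scr F_t:=\sigma(g^{(0)},\dots,g^{(t-1)})$, and put $x^{(t+1)}:=x^{(t)}+\gamma\,\Pi_{W^{(t)}}g^{(t)}$. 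Output $\psi:=x^{(T)}$; if any step moves a coordinate by more than $c\sqrt{\log(mn/\delta)}$ in absolute value (probability $o(1)$), abort and restart.

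\emph{Feasibility.} Each coordinate of $\Pi_{W^{(t)}}g^{(t)}$ is a centred Gaussian of variance $\le 1$, and each $\langle v_e,\Pi_{W^{(t)}}g^{(t)}\rangle=\langle\Pi_{W^{(t)}}v_e,g^{(t)}\rangle$ is a centred Gaussian of variance $\le\|v_e\|_2^2=1$; hence, on the high-probability event that every single-step change is at most $c\sqrt{\log(mn/\delta)}=\rho_0/2$ in absolute value, no coordinate can leave $[-1,1]$ before being frozen (its increment vanishes once $i\in C^{(t)}_{\mathrm{var}}$), and from any non-frozen state of a constraint $e$ (where $|\langle v_e,x^{(t)}-x_0\rangle|<\lambda_e-\rho_0$) a single step can bring $|\langle v_e,x-x_0\rangle|$ to at most $\lambda_e-\rho_0/2<\lambda_e$, after which $e$ is frozen. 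Since a frozen coordinate or constraint stays frozen, the sets $C^{(t)}_{\mathrm{var}},C^{(t)}_{\mathrm{disc}}$ are non-decreasing in $t$, so $\dim W^{(t)}$ is non-increasing, and the output satisfies $|\langle v_e,\psi-x_0\rangle|\le\lambda_e$ for all $e$.

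\emph{Progress and the hypothesis.} As $g^{(t)}$ is centred and independent of $\scr F_t$, one has $\mb E[\langle x^{(t)},\Pi_{W^{(t)}}g^{(t)}\rangle\mid\scr F_t]=0$ and $\mb E[\|\Pi_{W^{(t)}}g^{(t)}\|_2^2\mid\scr F_t]=\dim W^{(t)}$; expanding $\|x^{(t+1)}\|_2^2$ and telescoping gives $\gamma^2\sum_{t=0}^{T-1}\mb E[\dim W^{(t)}]=\mb E\|x^{(T)}\|_2^2-\|x_0\|_2^2\le n$. Since $\dim W^{(t)}$ is non-increasing and $\gamma^2T=8$, we get $\mb E[\dim W^{(T)}]\le n/8$, hence (using $\dim W^{(T)}\ge n-|C^{(T)}_{\mathrm{var}}|-|C^{(T)}_{\mathrm{disc}}|$) that $\mb E\big[|C^{(T)}_{\mathrm{var}}|+|C^{(T)}_{\mathrm{disc}}|\big]\ge 7n/8$. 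Now fix $e$: the process $M^{(t)}:=\langle v_e,x^{(t)}-x_0\rangle$ is a martingale with conditionally Gaussian increments of variance $\le\gamma^2$ (and $0$ once $e$ is frozen), so the sum of its conditional variances is $\le\gamma^2T=8$; a standard exponential-supermartingale/Doob maximal inequality then yields $\Pr[e\in C^{(T)}_{\mathrm{disc}}]=\Pr[\max_{t\le T}|M^{(t)}|\ge\lambda_e-\rho_0]\le 2\exp\big(-(\lambda_e-\rho_0)^2/16\big)$. Because $\rho_0$ is polynomially small, splitting the edges according to the size of $\lambda_e$ and applying H\"older's inequality gives $\sum_e\exp(-(\lambda_e-\rho_0)^2/16)=(1+o(1))\sum_e\exp(-\lambda_e^2/16)\le(1+o(1))n/16$, so $\mb E[|C^{(T)}_{\mathrm{disc}}|]\le(1+o(1))n/8$. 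Combining, $\mb E[|C^{(T)}_{\mathrm{var}}|]\ge 7n/8-(1+o(1))n/8\ge(3/4-o(1))n$, and since $|C^{(T)}_{\mathrm{var}}|\le n$, the reverse Markov inequality gives $|C^{(T)}_{\mathrm{var}}|\ge n/2$ with probability at least $1/2-o(1)$. Repeating the walk $O(1)$ times in expectation until a successful run yields the colouring; each run performs $T=O(\delta^{-2}\,\mathrm{polylog}(mn/\delta))$ steps, each costing $O((n+m)^3)$ for the linear algebra that computes a basis of $W^{(t)}$ and projects $g^{(t)}$ onto it, which after optimising $\gamma$ gives the stated expected running time.

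\emph{Main obstacle.} The conceptual heart — and the part I expect to be most delicate — is the simultaneous calibration of $\gamma$, of the horizon $T$ (equivalently of $\gamma^2T$), and of the slack $\rho_0$ so that all three requirements hold at once: the walk never actually breaches a constraint (this forces $\gamma,\rho_0$ small); $\mb E[\dim W^{(T)}]$ is a small fraction of $n$ (this forces $\gamma^2T$ large); and $\mb E[|C^{(T)}_{\mathrm{disc}}|]$ is a small fraction of $n$ with exactly the constant $16$ in the exponent appearing in the hypothesis (this forces $\gamma^2T$ small and $\rho_0$ negligible, even for edges with very small $\lambda_e$). The value $\gamma^2T=8$ is pinned down by this balance, and the H\"older splitting is exactly what makes the $o(1)$ slack from $\rho_0$ harmless against the tight bound $\sum_e\exp(-\lambda_e^2/16)\le n/16$. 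Everything else — the maximal inequalities behind feasibility, the second-moment telescoping of $\|x^{(t)}\|_2^2$, and the running-time bookkeeping — is routine.
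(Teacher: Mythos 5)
This lemma is not proved in the paper: it is imported verbatim (up to a change of terminology) from Lovett and Meka~\cite{Lovett_2012}, so there is no internal proof to compare against. Your proposal is, in substance, a correct reconstruction of the original Edge-Walk argument — the same freezing rule, the same energy identity $\gamma^2\sum_t\mb E[\dim W^{(t)}]\le n$ with $\gamma^2T=8$ pinning down the constant $16$ in the sub-Gaussian maximal bound, and the same accounting $\mb E[|C_{\mathrm{var}}|]\ge 7n/8-(1+o(1))n/8$ followed by reverse Markov. The only points that deserve more care than your sketch gives them are standard: the telescoping bound $\mb E\|x^{(T)}\|_2^2\le n$ itself presupposes that no coordinate has escaped $[-1,1]$, so the low-probability bad event must be handled by truncating the Gaussian steps (or absorbed into the failure probability) rather than by a bare abort-and-restart inside the expectation; and the absorption of the slack $\rho_0$ into the hypothesis $\sum_e\exp(-\lambda_e^2/16)\le n/16$ needs the explicit split at $\lambda_e\asymp\log(mn)$ that you allude to, since $\exp(\lambda_e\rho_0/8)$ is not uniformly $1+o(1)$.
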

\begin{remark}
As previously mentioned, Levy, Ramadas, and Rothvoss~\cite{Levy2017DeterministicDM} showed
that the algorithmic guarantee of Lemma \ref{lem:partial_colouring} can be achieved using a deterministic algorithm (albeit with a worse run-time).
\end{remark}

% \begin{remark}
In what follows, it will be convenient to refer to $\rho$ as the \textbf{target (fractional) colouring}
and $\delta$ as the \textbf{rounding parameter}.
% \end{remark}
%
We make use of Lemma~\ref{lem:partial_colouring} in the same way
as in~\cite{Bansal_2018, Lovett_2012, Potukuchi_2018, Potukuchi_2019}. In fact, we
analyze the same \textit{two-phase} algorithm considered by both Bansal and Meka~\cite{Bansal_2018}
and Potukuchi~\cite{Potukuchi_2018, Potukuchi_2019}, though we must tune
the relevant asymptotics carefully in order to achieve the bound claimed in Theorem~\ref{thm:dense_setting}.
In particular, we shorten phase one and change the target discrepancy in each application of Lemma~\ref{lem:partial_colouring}.
These modifications allow us to derive more precise asymptotics in the studied range of parameters.

Let us suppose that $H=(V,E)$ is a hypergraph drawn from
$\scr{H}(n,m,d)$ or $\mb{H}(n,m,p)$, where $p=d/m$. 
From now on,
we assume that $n$ is a power of $2$ for convenience. This follows
{\em w.l.o.g.}\ as we can always add extra vertices to $V$ which do not
lie in any of the edges of $G$.
Given the lower bounds of Theorems~\ref{thm:edge_independent_lower_bound}
and~\ref{thm:edge_dependent_lower_bound},
ideally we would like to compute an \textbf{output colouring}, $\phi:V \rightarrow \{-1,1\}$ 
with matching discrepancy. However, without finer proof techniques, this does not seem fully attainable for the full
parameter range of $m \gg n$.
% That being said, assume that $\beta = \beta(n)$
Let us fix $\mu \df d n/m$.
We recall the definition of $\beta=\beta(n)$ as given in the statement of Theorem~\ref{thm:dense_setting}:
For all $n$ sufficiently large, $\beta(n) \ge 1$ and
\begin{equation} \label{eqn:beta_asymptotics_mu}
  \beta(n)  \mu \ge \log(m/n)  \left(\log\mu + 2 \right)^{5}.
\end{equation}

% In this case,
Let 
$\hat{f}:= \sqrt{
  \mu
% \frac{d  n}{m}
 \log(m/n)  \beta}$ be the target upper bound on the discrepancy that we are aiming to prove.
Our argument is based on analyzing the \textsc{Iterated-Colouring-Algorithm}, which we now describe.
Fix $t_{1}\df\lg{\mu}$, $0 \le i \le t_{1}$, and let $H_0\df H$, and $\delta:=1/n$.
For convenience, we refer to the below procedure as \textbf{round} $i$.
We define $\hat{f}_{i}:= \hat{f}  (i+2)^{-2}$ as the desired discrepancy bound to be attained in round $i$.
\begin{enumerate}
    \item Remove all edges of $H_{i}=(V_i,E_i)$ of size less than or equal to $\hat{f}$.
      % If $H_i$
    % is left with no edges, then exit the procedure. % and return $\phi$.
    \item 
    Update $\lambda_{e}$ to be $\hat{f}_{i}/|e|^{1/2}$ for each $e \in E_{i}$. 
    Update the target colouring, which we denote by $\rho_{i}$, to be the previously computed colouring  \sv{$\psi_{i-1}$} \lv{$\psi_{i-1}:V_{i-1} \rightarrow [-1,1]$}  restricted to $V_{i}$
    (here $\psi_{-1}$ is the identically zero function by convention). 
    % \lv{    Keep the rounding parameter $\delta$ the same.}
    %Update $\lambda_{e}$ to be $\hat{f}_{i}/|e|^{1/2}$ for each $e \in E_{i}$,
    %and update the target colouring to be the previously computed colouring, $\psi_{i-1}: V_{i-1} \rightarrow [-1,1]$, restricted to
    %the currently active vertices, $V_{i}$, which we denote by $\rho_{i-1}$. Keep the rounding parameter $\delta$ the same.
    \item If $\sum_{e \in E_{i}} \exp(-\lambda_{e}^{2}/16) \le |V_{i}|/16$, 
    then apply Lemma~\ref{lem:partial_colouring} to $H_{i}$ with the above values, 
    yielding the fractional colouring \lv{$\psi_i: V_i \rightarrow [-1,-1]$}\sv{$\psi_i$}. Otherwise, abort the round
    and declare an error.
    \item 
      Assuming an error did not occur,
      % update $\phi$
    % by applying the newly derived colouring to the vertices of $V_i$; that is, update $\phi(v)$ from $\psi_{i-1}(v)$
    % to $\psi_{i}(v)$ for each $v \in V_{i}$.
%
    % Finally,
    compute $S_i \subseteq V_{i}$,
    such that $|\psi_{i}(v)| \ge 1 - 1/n$ for all $v \in S_i$, where $|S_i|:= |V_{i}|/2$. Afterwards, construct
    $H_{i+1}=(V_{i+1},E_{i+1})$ by restricting the edges of $E_{i}$ to $V_{i+1}$,
    where $V_{i+1} := V_i \setminus S_i$.
\end{enumerate}
We refer to the rounds $i=0, \ldots ,t_{1}$ as \textbf{phase one} of the algorithm's execution. 
% \sv{%
Note that we refer to the vertices of $S_i$ as \textbf{inactive} after round $i$, as the value $\phi$ assigns to them will not change at any point onwards.
We refer to the remaining vertices as being \textbf{active}.
% }%
Observe then the following proposition:
\begin{prop} \label{prop:phase_one_bound}
For each $0 \le i \le t_{1}$ and $e \in E_i$, it holds that
\lv{\[}
  \sv{$}
  | \psi_{i}(e) - \rho_{i}(e) | \le \frac{\hat{f}}{(i+2)^2}. 
  \sv{$}
\lv{\]}
%\[
%    |\psi_{i}(e) - \rho_{i}(e)| \le \frac{\hat{f}_{i}}{(i + 2)^{5}},
%\]
\end{prop}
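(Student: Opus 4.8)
Proposition~\ref{prop:phase_one_bound} is essentially a direct consequence of applying Lemma~\ref{lem:partial_colouring} at round $i$, so the plan is to verify that the hypotheses of that lemma are met in each round of phase one, and then extract property (1) of the lemma. First I would observe that in round $i$ we set $\lambda_e := \hat f_i / |e|^{1/2}$ and $\delta := 1/n$, and that the target colouring $\rho_i$ is $\psi_{i-1}$ restricted to $V_i$ (with $\psi_{-1}\equiv 0$). By construction the round only proceeds (without declaring an error) when the precondition
\[
\sum_{e \in E_i} \exp(-\lambda_e^2/16) \le |V_i|/16
\]
holds, which is exactly~\eqref{eqn:partial_colouring} for the hypergraph $H_i=(V_i,E_i)$. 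Hence Lemma~\ref{lem:partial_colouring} applies to $H_i$ and yields a fractional colouring $\psi_i : V_i \to [-1,1]$ with $|\psi_i(e) - \rho_i(e)| \le \lambda_e |e|^{1/2}$ for all $e \in E_i$.

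The second step is simply to unwind the definition of $\lambda_e$: for every $e\in E_i$,
\[
|\psi_i(e) - \rho_i(e)| \le \lambda_e |e|^{1/2} = \frac{\hat f_i}{|e|^{1/2}}\,|e|^{1/2} = \hat f_i = \frac{\hat f}{(i+2)^2},
\]
which is precisely the claimed bound. There is one mild subtlety worth addressing explicitly: the statement quantifies over all $0\le i\le t_1$, so one should note that phase one either runs all rounds $0,\ldots,t_1$ successfully (in which case the above argument applies verbatim at each round) or aborts at some round with an error; the proposition is understood to hold for every round that is actually executed, i.e.\ conditionally on no error having occurred up to and including round $i$. Since the excerpt defers the analysis of when errors occur (the bound~\eqref{eqn:beta_asymptotics_mu} on $\beta$ is what will later guarantee the precondition is met \whp), here it suffices to record that \emph{whenever} round $i$ completes, property (1) of Lemma~\ref{lem:partial_colouring} gives the stated inequality.

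The only place any care is needed is bookkeeping: making sure that the $\lambda_e$ used to check the precondition in step (3) is the same $\lambda_e := \hat f_i/|e|^{1/2}$ that was set in step (2), and that edges of size $\le \hat f$ have been removed in step (1) so that $|e|^{1/2}$ is well-defined and the colouring guarantee is only being claimed over the surviving edge set $E_i$ (edges removed in step (1) trivially have discrepancy at most $\hat f$ under any $\{-1,1\}$-colouring anyway, but they are not part of the $E_i$ referenced in the proposition). I do not anticipate a genuine obstacle here — the real work of the section lies in the later propositions that bound the failure probability of the rounds and control the accumulation of discrepancy across phases — so the proof is a short verification rather than an argument with a hard core.
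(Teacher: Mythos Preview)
Your proposal is correct and matches the paper's approach: the paper states the proposition as an immediate observation with no written proof, since it follows directly from property (1) of Lemma~\ref{lem:partial_colouring} applied in round $i$ with $\lambda_e = \hat f_i/|e|^{1/2}$, exactly as you spell out. Your additional remarks about the conditional nature of the claim (rounds that actually execute) and the bookkeeping on removed edges are accurate and simply make explicit what the paper leaves implicit.
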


Assuming that
none of the rounds yielded an error, there are exactly $2^{-t_{1}}  n = n  \mu^{-1} = m/d$ active vertices at the end
of phase one, as $t_1 = \lg \mu$. Heuristically, this means that we expect $H_{t_{1}+1}$ to have
edges of roughly constant size. As such, we can easily complete the colouring $\phi$ by executing the %following
\textbf{phase two} procedure for rounds $t_{1} + 1 \le i \le t_{2}$, where $t_{2}:= \lg(10  n/\hat{f}) + 1$. 
The phase two procedure is identical to that of phase one, with the exception that in step 2, we update $\lambda_e$ to be $0$ (rather than $\hat{f}_{i}/|e|^{1/2}$) for each $e\in E_i$.
Analogously, we observe the following.
\begin{prop} \label{prop:phase_two_bound}
For each $t_{1} + 1 \le i \le t_{2}$ and $e \in E_i$, it holds that
\lv{\[}
  \sv{$}
    |\psi_{i}(e) - \rho_{i}(e)| =0.
  \sv{$}
\lv{\]}
\end{prop}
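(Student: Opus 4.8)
The plan is to obtain this as a one-line consequence of property~(1) of the Partial Colouring Lemma (Lemma~\ref{lem:partial_colouring}), in exactly the way Proposition~\ref{prop:phase_one_bound} was derived, but now exploiting that in phase two all the parameters $\lambda_e$ have been reset to $0$.

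Fix a round $i$ with $t_1+1\le i\le t_2$ for which no error was declared, so that $\psi_i$ is defined (for rounds that abort, there is nothing to prove). By the phase-two modification of step~2, $\lambda_e=0$ for every $e\in E_i$, and the target colouring is $\rho_i$, namely $\psi_{i-1}$ restricted to $V_i$ (a valid fractional colouring, since $\psi_{i-1}$ takes values in $[-1,1]$). Because the round did not abort, step~3 guarantees that
\[
\sum_{e\in E_i}\exp(-\lambda_e^2/16)\le |V_i|/16,
\]
which is exactly hypothesis~\eqref{eqn:partial_colouring} of Lemma~\ref{lem:partial_colouring} applied to $H_i$ (with vertex set $V_i$, rounding parameter $\delta=1/n$, target colouring $\rho_i$, and these $\lambda_e$). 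Hence the lemma produces $\psi_i$ satisfying its two conclusions, and property~(1) yields
\[
|\psi_i(e)-\rho_i(e)|\le \lambda_e\,|e|^{1/2}=0\qquad\text{for every }e\in E_i.
\]
Since the left-hand side is non-negative, it must equal $0$, which is the claim.

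There is no real obstacle here: the only thing to verify is that Lemma~\ref{lem:partial_colouring} is applicable in each phase-two round, and this is built into the algorithm (step~3 aborts precisely when the hypothesis fails), so conditioning on ``no error'' makes it automatic. The substantive work lies elsewhere in the analysis---showing that errors never in fact occur during phase two, and then combining Propositions~\ref{prop:phase_one_bound} and~\ref{prop:phase_two_bound} with the cost of the final integral rounding to bound $\disc(\phi)=O(\hat f)$. Proposition~\ref{prop:phase_two_bound} is merely the bookkeeping statement that, once the active edges are small enough, each phase-two invocation of the lemma can demand zero slack on every edge.
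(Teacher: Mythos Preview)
Your proposal is correct and matches the paper's approach: the paper does not even give an explicit proof of this proposition, instead stating it as an immediate observation (``Analogously, we observe the following''), since it follows directly from property~(1) of Lemma~\ref{lem:partial_colouring} with $\lambda_e=0$.
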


Assuming that
none of the rounds yielded an error, there are exactly $2^{-t_{2}}  n = n  \hat{f}/20n = \hat{f}/20$ active vertices at the end
of phase two.
In order to complete the construction of $\phi$, we conclude
with a \textbf{post-processing phase}. That is,
we arbitrarily assign $-1$ or $1$ to any of the vertices which remain
active at the end of phase two. Finally,
we round each remaining fractional value assigned by $\phi$ to the nearest
integer within $\{-1,1\}$.

Let us assume that the above procedure succeeds in its execution on $H$;
that is, it does \textit{not} abort during any iteration in either phase one
or two. In this case, we conclude the proof by showing the next lemma.
% it is easy to prove that the colouring $\phi$ which it returns
% achieves discrepancy $O( \sqrt{d  n/m}  \log(m/n))$, thereby proving the following lemma.
% proving Theorem~\ref{thm:dense_setting}:

\begin{lemma}\label{lem:no-fail}
  If neither phase one nor phase two fails then Theorem~\ref{thm:dense_setting} holds.
\end{lemma}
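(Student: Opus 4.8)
The plan is to track how the colouring $\phi$ is assembled across the two phases and the post-processing step, and to bound the total discrepancy accumulated on each edge $e$ of the original hypergraph $H$. First I would dispose of the small edges: any edge of size at most $\hat f$ was removed in step~1 of round~$0$, and such edges trivially have discrepancy at most $\hat f = O(\hat f)$ under the final integral colouring $\phi$, so it suffices to control the edges that survive into the iteration. For a surviving edge $e$, I would write $\phi(e)$ as a telescoping sum: $\phi|_{V_i}$ agrees with $\psi_i$ on $V_i$ at the end of round $i$ (up to the later rounding), so the contribution of $e$ decomposes as $\sum_{i=0}^{t_1} (\psi_i(e\cap V_i) - \rho_i(e\cap V_i))$ over phase one, plus $\sum_{i=t_1+1}^{t_2}(\psi_i(e\cap V_i)-\rho_i(e\cap V_i))$ over phase two, plus the error introduced in post-processing. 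By Proposition~\ref{prop:phase_one_bound} the phase-one terms are bounded by $\sum_{i=0}^{t_1} \hat f/(i+2)^2 \le \hat f \sum_{i\ge 0}(i+2)^{-2} = O(\hat f)$, and by Proposition~\ref{prop:phase_two_bound} the phase-two terms contribute exactly $0$.

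Next I would handle the post-processing phase. After phase two there are $\hat f/20$ active vertices remaining; each gets assigned $\pm1$ arbitrarily, so each edge $e$ picks up an additional discrepancy of at most $|\{\text{active vertices in }e\}| \le \hat f/20$. Then every fractional value is rounded to the nearest element of $\{-1,1\}$: each vertex that was declared inactive in some round $i$ satisfies $|\psi_i(v)|\ge 1-1/n$, so rounding it changes its value by at most $1/n$, and an edge $e$ contains at most $n$ such vertices, contributing at most $n\cdot(1/n)=1$ to its discrepancy. Summing the three sources, $|\phi(e)| \le O(\hat f) + \hat f/20 + 1 = O(\hat f)$ for every edge, and since $\hat f = \sqrt{\mu\,\log(m/n)\,\beta} = \sqrt{(dn/m)\log(m/n)\,\beta}$, this is exactly the bound claimed in Theorem~\ref{thm:dense_setting}. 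The algorithmic claim follows because $\phi$ is produced by $t_2 = O(\log n)$ invocations of the randomized algorithm of Lemma~\ref{lem:partial_colouring}, each running in expected time $O((n+m)^3\delta^{-2}\log(mn/\delta))$ with $\delta=1/n$, hence in expected polynomial time overall; here I would note that the hypothesis of the lemma — that no round fails — is precisely what lets us chain these invocations deterministically.

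The main obstacle I anticipate is purely bookkeeping rather than conceptual: one must be careful that the target colouring $\rho_{i+1}$ in round $i+1$ is the \emph{output} $\psi_i$ of round $i$ restricted to the still-active vertices $V_{i+1}$, so the telescoping identity $\phi(e) - 0 = \sum_i (\psi_i - \rho_i)$ only holds once one observes that on the inactive set $S_i$ the colouring is frozen at $\psi_i$ and never revisited (up to the final $1/n$ rounding), and that the edges are restricted consistently as $E_{i+1}$ is obtained from $E_i$ by restriction to $V_{i+1}$. I would also make explicit that the $i$th phase-one term of the telescoping sum, $\psi_i(e\cap V_i)-\rho_i(e\cap V_i)$, is bounded by $\lambda_e |e\cap V_i|^{1/2} = \hat f_i = \hat f/(i+2)^2$ by property~(1) of Lemma~\ref{lem:partial_colouring} together with the definition $\lambda_e = \hat f_i/|e\cap V_i|^{1/2}$ used in step~2 of round $i$ — this is exactly the content of Proposition~\ref{prop:phase_one_bound}, which I invoke as a black box. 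Everything else is summation of a convergent series and elementary arithmetic, so no further subtlety arises.
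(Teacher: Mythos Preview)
Your proof is correct and matches the paper's approach: express $\phi$ as a telescoping sum of the increments $\psi_i-\rho_i$, bound the phase-one increments via Proposition~\ref{prop:phase_one_bound} (a convergent series $\sum_i \hat f/(i+2)^2$), note that phase-two increments vanish by Proposition~\ref{prop:phase_two_bound}, and control the post-processing error by $\hat f/20 + 1$. The one detail you gloss over---and which the paper makes explicit via a stopping time $t_e$---is that an edge $e$ may be removed in some round $t_e>0$ (not just round~0) once $|e\cap V_{t_e}|\le \hat f$, so Proposition~\ref{prop:phase_one_bound} no longer applies to the terms with $i\ge t_e$; but your own round-0 observation generalizes immediately, since the at most $\hat f$ vertices of $e$ still active at that point contribute at most $\hat f$ to $|\phi(e)|$, which the paper absorbs as a single extra $\hat f$ term.
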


% \begin{algorithm} 
% \caption{Iterated Partial Colouring} \label{alg:partial_colouring}
% \begin{algorithmic}[1]
% \Statex Input a hypergraph $H=(V,E)$ drawn from
% $\scr{H}(n,m,d)$ or $\mb{H}(n,m,p)$, where $p=d/m$.
% \State Initialize $\delta =1/n, V_{0} = V$, and both $\phi$ and $\psi_{-1}$ as identically zero functions on $V_0$.
% \For{$i=0, \ldots ,\log_{2}(n/f)$}
% \State Denote $E_i$ as the collection of edges of $H$ which have at least $f$ vertices in $V_i$.
% \If{$i <\log_{2}(d  m/n)$}
% \State Set $\lambda_{j} \leftarrow f_{i}/|e_{j}|^{1/2}$ for each $j \in E_{i}$. \Comment{Phase $1$ assignment.}
% \Else
% \State Set $\lambda_{j} \leftarrow 0$ for each $j \in E_{i}$. \Comment{Phase $2$ assignment.}
% \EndIf

% \If{$\sum_{j \in E_{i}} \lambda_{j} \le n_{i}/16$} \label{eqn:abort}      \Comment{Condition to abort execution.}
% \State Apply Lemma~\ref{lem:partial_colouring} to $(\lambda_{j})_{j \in E_{i}}$ with 
% $\delta$ and the colouring $\psi_{i-1}$ restricted to $V_{i}$.
% \State Let $\psi_{i}:V_{i} \rightarrow [-1,1]$ be the newly returned colouring.
% \State Set $V_{i+1}$ to be the uncoloured vertices of $\psi_{i}$.
% \State Set $\phi(v) = \psi_{i}(v)$ for each $v \in V \setminus V_{i}$.
% \Else
% \State Declare a failure and abort from iteration $i$.
% \EndIf

% \EndFor

% \State For each $v \in V$ with $\phi(v)=0$, arbitrarily assign $-1$ or $1$.
% \State Round each remaining fractional value assigned by $\phi$ to the nearest
% integer within $\{-1,1\}$.

% \State Return $\phi$.

% \end{algorithmic}
% \end{algorithm}

% \begin{proof}[Proof of Theorem~\ref{thm:dense_setting}]
\begin{proof}%[Proof of Theorem~\ref{thm:dense_setting}]
  For each $0 \le i \le t_{2}$, let us formally extend $\psi_{i}$
to all of $V$. That is, define $\psi_{i}(v):= 0$ for each $v \in V \setminus V_{i}$,
and keep $\psi_{i}$ unchanged on $V_i$. Moreover, do the same for the target colouring $\rho_{i}$.
% where $\rho_{-1}$ is the identically zero function by convention.
%
Observe then that once phase two ends,
$\phi$ can be expressed as a sum of differences involving the partial colourings $(\psi_{i})_{i=0}^{t_{2}}$
and $(\rho_{i})_{i=0}^{t_{2}-1}$
Specifically,
\lv{\[}
\sv{$}
    \phi(v) = \sum_{i=0}^{t_{2}} (\psi_{i}(v) - \rho_{i}(v)).
\sv{$}
\lv{\]}
Let $t_e$ be the time when edge $e$ becomes smaller than $\hat{f}$ or $t_e=t_2$ if it never happens. 
After applying Propositions~\ref{prop:phase_one_bound} and~\ref{prop:phase_two_bound} %\TMc{together with the bound in case of $|e|\le\hat{f}$},
we get that
\begin{align*}
|\phi(e)| &\le \hat{f} + \sum_{i=0}^{t_{e}}|\psi_{i}(e) - \rho_{i}(e)|  \lv{\\}
\lv{&}\le \hat{f} + \sum_{i=0}^{t_{1}} |\psi_{i}(e)- \rho_{i}(e)| + \sum_{i=t_{1}+1}^{t_2} |\psi_{i}(e)- \rho_{i}(e)| \\
              &\le\hat{f} + \sum_{i= 0}^{\infty} \frac{\hat{f}}{(i+2)^{2}} = O(\hat{f}).
\end{align*}

The post-processing phase cannot increase the discrepancy that $\phi$ attains
on any edge of $E$ by more than $\hat{f}$ for the remaining active vertices; as we already observed, there are at most $\hat{f}/20$ of them.
The rounding of inactive vertices increases the discrepancy by at most 1.
%Plus at most $1$ more for the rounding of inactive vertices.
\end{proof}
% \TMc{Maybe we could highlight here Why we need phase two as we do not need it for the computations above, but we can do it once we have everything else cleaned.}

%%%%%%%%%%%%%%%%%%%%%%%%%%%%%%%%%%%%%%

\toappendix{
\subsubsection*{Bounding the Failure Probability}

First, we recall the following lemma proven in~\cite{Bansal_2018} by Bansal and Meka:
\begin{lemma}[Lemma~6 in~\cite{Bansal_2018}] 
Suppose that $H$ is generated from $\scr{H}(n,m,d)$ 
and $\bm{M}$ is a fixed $r \times \ell$ sub-matrix of the $m\times \ell$ incidence matrix $\bm{A}$ of $H$. % which is fixed in advance.
If $s \ge 10 d \ell/m$,
%$s \ge 10  \mu$,
and $B(r, \ell, s)$ corresponds to the event in which each row of $\bm{M}$ has at least $s$ 1's,
then 
% \[
    % \mb{P}[B(r,\ell,s)] \le \exp\left(- \frac{\ell  s \log(s/\mu)}{2}\right).
% \]
\[
  \mb{P}[B(r,\ell,s)] \le \exp\left(- \frac{r  s \log((s m)/(d \ell))}{2}\right).
\]
\end{lemma}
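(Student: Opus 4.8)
The plan is to replace the event $B(r,\ell,s)$ by a much coarser deviation event that is easier to analyze. Let $R\subseteq[m]$ be the fixed set of $r$ rows that $\bm M$ is built from, and let $T$ be the total number of ones of $\bm A$ with row index in $R$ and column index in $[\ell]$, i.e.\ the number of ones in $\bm M$. If every one of the $r$ rows of $\bm M$ contains at least $s$ ones, then $T\ge rs$, so $\mb{P}[B(r,\ell,s)]\le\mb{P}[T\ge rs]$, and it suffices to control the upper tail of $T$.

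The key point is that $T$ is a sum of independent random variables. Since the columns of $\bm A$ are mutually independent and column $k$ carries exactly $d$ ones placed uniformly among the $m$ rows, we may write $T=\sum_{k=1}^{\ell}Z_k$ with $Z_1,\dots,Z_\ell$ independent and $Z_k\sim\text{Hyper}(m,d,r)$, where $Z_k$ counts how many of the $d$ ones in column $k$ land in $R$. Hence $\mb{E}[T]=\ell\,dr/m=:\mu$, and the hypothesis $s\ge 10d\ell/m$ states exactly that $a:=rs/\mu=sm/(d\ell)\ge 10$.

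Next I would apply a Poisson-type Chernoff bound to $T$. Because a hypergeometric random variable is dominated in the convex order by the corresponding binomial $\Bin(d,r/m)$ (Hoeffding's comparison of sampling with and without replacement), its moment generating function is at most that of $\Bin(d,r/m)$; multiplying over the $\ell$ independent columns, the moment generating function of $T$ is at most that of $\Bin(\ell d,r/m)$, which also has mean $\mu$. Optimizing the exponential Markov inequality over the free parameter then gives the standard bound $\mb{P}[T\ge a\mu]\le\exp(-\mu(a\ln a-a+1))$. (Alternatively one may quote directly the Chernoff bound for sums of independent hypergeometrics from the reference used for Theorem~\ref{thm:hypergeometric_concentration}.)

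Finally I would simplify the exponent. Writing $a\mu=rs$ we get $\mu(a\ln a-a+1)=rs(\ln a-1+1/a)\ge rs(\ln a-1)$, and since $a\ge 10>e^{2}$ we have $\ln a-1\ge\tfrac12\ln a$; therefore
\[
  \mb{P}[B(r,\ell,s)]\le\mb{P}[T\ge rs]\le\exp\left(-\tfrac{rs}{2}\ln a\right)=\exp\left(-\frac{rs\,\log((sm)/(d\ell))}{2}\right),
\]
which is the claimed bound. The only genuinely delicate step is the convex-order (moment generating function) comparison between the hypergeometric $Z_k$ and the binomial: it is classical but should be cited carefully, because the deviation factor $a\ge 10$ here lies well outside the small-$\lambda$ regime of Theorem~\ref{thm:hypergeometric_concentration}, so that result cannot be used as a black box. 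Everything else is a routine Chernoff-bound computation.
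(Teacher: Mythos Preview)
Your argument is correct. Note, however, that the paper does not give its own proof of this lemma: it is simply quoted from Bansal--Meka~\cite{Bansal_2018}, and the only information the paper adds is the remark that ``the upper bound on $\mb{P}[B(r,\ell,s)]$ is proven by instead bounding the probability of the analogous event when $H$ is generated from $\mb{H}(n,m,p)$ for $p=d/m$.'' That description points to a row-by-row argument in the Bernoulli model (where rows are independent and each row-sum is $\Bin(\ell,d/m)$), followed by a transfer to $\scr H(n,m,d)$.

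Your route is slightly different and arguably cleaner: you stay in the edge-dependent model, use the independence of the \emph{columns}, pass to the aggregate count $T=\sum_k Z_k$, and control its upper tail via Hoeffding's convex-order comparison of $\text{Hyper}(m,d,r)$ with $\Bin(d,r/m)$. This sidesteps any separate reduction from $\scr H$ to $\mb H$ (e.g., via negative association of the row sums), at the modest cost of citing the hypergeometric MGF bound explicitly. The final exponent is identical either way, and your observation that Theorem~\ref{thm:hypergeometric_concentration} alone does not suffice (since the deviation ratio $a\ge 10$ is far outside its small-$\lambda$ regime) is correct.
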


While this lemma is stated for the case when $H$ is generated from $\scr{H}(n,m,d)$, the upper bound
on $\mb{P}[B(r,\ell,s)]$ is proven by instead bounding the probability of the analogous event
when $H$ is generated from $\mb{H}(n,m,p)$ for $p=d/m$. As such, this lemma extends to the edge
independent model. We now restate it in a form which will be more convenient for our purposes.
\begin{lemma} \label{lem:submatrices}
Suppose that $H$ is generated from $\scr{H}(n,m,d)$ or $\mb{H}(n,m,p)$ 
for $p=d/m$, whose incidence matrix we denote by $\bm{A}$. If $s \ge 10  d \ell/m$,
then define $Q(r, \ell, s)$ as the event in which there exists
an $r \times \ell$ sub-matrix of $\bm{A}$ in which each row has at least $s$ 1's.
In this case,
\[
    \mb{P}[Q(r,\ell,s)] \le \binom{m}{r}  \binom{n}{\ell}   \exp\left(- \frac{ r  s \log((s m)/(d \ell))}{2}\right).
\]
\end{lemma}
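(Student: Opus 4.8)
<br>

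The statement to prove is Lemma \ref{lem:submatrices}, which is essentially a union bound over all possible $r \times \ell$ submatrices applied to the single-submatrix bound of Lemma~6 from~\cite{Bansal_2018}.

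The plan is as follows. First I would fix an arbitrary $r \times \ell$ sub-matrix $\bm{M}$ of $\bm{A}$; this amounts to choosing a set of $r$ rows (out of $m$) and a set of $\ell$ columns (out of $n$), which can be done in $\binom{m}{r}\binom{n}{\ell}$ ways. For each such fixed choice, the cited Lemma~6 of Bansal and Meka gives that the probability that every row of $\bm{M}$ contains at least $s$ ones is at most $\exp\!\left(-\tfrac{r s \log((sm)/(d\ell))}{2}\right)$, valid under the hypothesis $s \ge 10 d\ell/m$. I would note (as the paragraph preceding the statement already does) that although that lemma is phrased for $\scr{H}(n,m,d)$, its proof passes through the corresponding event under $\mb{H}(n,m,p)$ with $p = d/m$, so the same bound holds in the edge-independent model. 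Then the event $Q(r,\ell,s)$ is precisely the union, over all $\binom{m}{r}\binom{n}{\ell}$ choices of sub-matrix, of the events $B(r,\ell,s)$ for that sub-matrix, so a union bound immediately yields
\[
\mb{P}[Q(r,\ell,s)] \le \binom{m}{r}\binom{n}{\ell}\exp\!\left(-\frac{r s \log((sm)/(d\ell))}{2}\right),
\]
as claimed.

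There is essentially no obstacle here: the only minor point worth being careful about is that Lemma~6 bounds the probability that a \emph{fixed} sub-matrix has the ``heavy row'' property, and that this bound does not depend on which particular rows and columns were chosen (it depends only on $r$, $\ell$, $s$, $d$, $m$), so it can be pulled out of the union bound as a common factor. One should also make sure the hypothesis $s \ge 10 d\ell/m$ of Lemma~6 matches the hypothesis stated here, which it does verbatim. So the proof is just: enumerate sub-matrices, apply Lemma~6 to each, union bound.

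\begin{proof}
Observe that an $r\times\ell$ sub-matrix of $\bm{A}$ is determined by a choice of $r$ of the $m$ rows and $\ell$ of the $n$ columns, so there are exactly $\binom{m}{r}\binom{n}{\ell}$ such sub-matrices. For a fixed sub-matrix $\bm{M}$, let $B_{\bm M}$ denote the event that every row of $\bm{M}$ contains at least $s$ ones. By Lemma~6 of~\cite{Bansal_2018} (which, as noted above, applies to both $\scr{H}(n,m,d)$ and $\mb{H}(n,m,p)$ with $p=d/m$, since its proof bounds the probability of the analogous event in the edge-independent model), and using the hypothesis $s\ge 10 d\ell/m$, we have
\[
\mb{P}[B_{\bm M}] \le \exp\!\left(-\frac{r s \log((sm)/(d\ell))}{2}\right),
\]
and this bound is the same for every sub-matrix $\bm{M}$ of the given dimensions. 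Since $Q(r,\ell,s) = \bigcup_{\bm M} B_{\bm M}$, where the union is over all $\binom{m}{r}\binom{n}{\ell}$ sub-matrices of dimension $r\times\ell$, the union bound gives
\[
\mb{P}[Q(r,\ell,s)] \le \sum_{\bm M} \mb{P}[B_{\bm M}] \le \binom{m}{r}\binom{n}{\ell}\exp\!\left(-\frac{r s \log((sm)/(d\ell))}{2}\right),
\]
as required.
\end{proof}
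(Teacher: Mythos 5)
Your proof is correct and matches the paper's intended argument exactly: the paper restates Lemma 6 of Bansal and Meka as Lemma~\ref{lem:submatrices} by taking a union bound over the $\binom{m}{r}\binom{n}{\ell}$ choices of sub-matrix, noting (as you do) that the single-sub-matrix bound extends to the edge-independent model. Nothing to add.
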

}

Using \lv{this lemma}\sv{Lemma 6 from~\cite{Bansal_2018}}, we can ensure that \whp\ \textsc{Iterated-Colouring-Algorithm} will not
abort during phase one (Proposition \ref{prop:phase_one_error}) or two (Proposition~\ref{prop:phase_two_error}) and thus conclude the proof of Theorem~\ref{thm:dense_setting}.
\sv{We postpone the precise statements as well as proofs of the propositions to Appendix~\ref{app:color}.}

\toappendix{
% \TMc{I think we should fix $\mu$ once at the begining of the section. We are using it there already and now it is defined several times on several places.}
\begin{prop} \label{prop:phase_one_error}
If \textsc{Iterated-Colouring-Algorithm} inputs a hypergraph
drawn from $\scr{H}(n,m,d)$ or $\mb{H}(n,m,p)$ where $p=d/m$,
then \whp\ it does not abort during phase one, provided we assume that
$\mu=d  n/m \rightarrow \infty$
and $m \gg n$.
\end{prop}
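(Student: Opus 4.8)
The plan is to reduce the claim, via a union bound over the at most $t_1+1=\lg\mu+1=O(\log n)$ rounds of phase one, to showing that for each fixed $i\in\{0,\ldots,t_1\}$ the probability that round $i$ aborts is $o(1/\log n)$. Round $i$ aborts exactly when $\sum_{e\in E_i}\exp(-\lambda_e^2/16)>|V_i|/16$, where $\lambda_e=\hat f_i|e|^{-1/2}$, $\hat f_i=\hat f(i+2)^{-2}$, the active set $V_i$ has size $n_i:=2^{-i}n$, and $E_i$ consists of the original edges restricted to $V_i$ with those of restricted size at most $\hat f$ discarded. Rewriting the failure condition as $\sum_{e\in E_i}\exp(-\hat f_i^2/(16|e|))>n_i/16$, the point is that the summand is close to $1$ only for edges whose restriction to $V_i$ is far larger than the typical value $\mu_i:=dn_i/m=\mu 2^{-i}$, so an abort forces an atypically large number of atypically large edges.

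First I would deal with the fact that $V_i$ is itself a random set, determined by the (random) executions of the partial colouring lemma in the earlier rounds and hence correlated with the incidence matrix $\bm A$ of $H$. This is exactly what Lemma~\ref{lem:submatrices} is designed for: it bounds, uniformly over all $r\times\ell$ sub-matrices of $\bm A$, the probability that some such sub-matrix has every row containing at least $s$ ones. Taking $\ell=n_i$ (so the columns range over all candidate active sets) and suitable $r,s$, this controls the number of edges with large restriction to $V_i$ without ever needing independence between $V_i$ and $\bm A$. Concretely, I would split the edges of $E_i$ by the dyadic scale of their restricted size (only $O(\log n)$ scales, since the sizes lie in $(\hat f,n_i]$); on the scale of size about $2^j$, bound the summand by $\exp(-\hat f_i^2/(16\cdot 2^{j+1}))$ and bound the number of such edges by the largest $r_j$ for which Lemma~\ref{lem:submatrices}, applied with $s=2^j\ge 10\mu_i$, still gives probability $o(1/\log^2 n)$; for scales below $10\mu_i$ there are at most $m$ edges and the summand is at most $\exp(-\hat f_i^2/(16\cdot 10\mu_i))$, which I want to be at most $n_i/(32m)$. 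Summing the dyadic contributions and comparing with $n_i/16$ then closes round $i$.

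The delicate part is the bookkeeping in the previous paragraph: I must check that, simultaneously for every round $i\le t_1$ and every relevant scale $2^j$, the gain $\exp(-r_js_j\log(s_j/\mu_i)/2)$ from Lemma~\ref{lem:submatrices} beats the union-bound loss $\binom{m}{r_j}\binom{n}{n_i}=\exp(O(r_j(\log(m/n)+\log\mu))+O(n_i\log\mu))$, while keeping $\sum_j r_j\exp(-\hat f_i^2/(16\cdot 2^{j+1}))\le n_i/16$. This is precisely where the hypothesis~\eqref{eqn:beta_asymptotics_mu} (equivalently~\eqref{eqn:beta_asymptotics}) enters: passing from $\hat f$ to $\hat f_i$ loses a factor $(i+2)^2$, hence $(i+2)^4\le(\lg\mu+2)^4$ in $\hat f_i^2$, and the fifth power $(\log\mu+2)^5$ supplies both this loss and an extra logarithmic factor, so that $\hat f_i^2=\Omega(\log(m/n)^2(\log\mu+2))$ uniformly in $i$; this forces $\exp(-\hat f_i^2/(16\cdot 2^{j+1}))$ to decay fast enough at the smallest scale $2^j$ of order $\hat f$, while the $\log(s_j/\mu_i)$ factor in Lemma~\ref{lem:submatrices} handles the larger scales. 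I expect the genuine obstacle to be making this estimate uniform across the whole of phase one, and in particular near the last round $i=\lg\mu$, where the typical edge size $\mu_i$ has dropped to a constant and the $\log(s/\mu_i)$ saving in the sub-matrix lemma is weakest; a careful choice of the thresholds $s_j$ (and of the base of the dyadic scales) together with~\eqref{eqn:beta_asymptotics_mu} is what pushes the per-round failure probability down to $o(1/\log n)$.
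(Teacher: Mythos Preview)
Your overall strategy---split the edges of $E_i$ by size and invoke Lemma~\ref{lem:submatrices} to control the large ones, taking the union over all $\binom{n}{n_i}$ candidate column sets to handle the randomness of $V_i$---is the right shape. But there is a concrete gap at the ``small-edge'' step, and hypothesis~\eqref{eqn:beta_asymptotics_mu} does not close it.

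You place the cutoff at $10\mu_i$ because that is the smallest $s$ for which Lemma~\ref{lem:submatrices} applies with $\ell=n_i$, and then you need
\[
m\,\exp\!\left(-\frac{\hat f_i^{\,2}}{16\cdot 10\,\mu_i}\right)\le \frac{n_i}{32}.
\]
However $\hat f_i^{\,2}/(160\,\mu_i)=\beta\log(m/n)\,2^{i}\big/\big(160(i+2)^4\big)$: the factor of $\mu$ in $\hat f^2$ is cancelled by the $\mu$ in $\mu_i$, so~\eqref{eqn:beta_asymptotics_mu} (which bounds $\beta\mu$ from below) gives you nothing here. At $i=0$ the exponent is $\beta\log(m/n)/2560$, and the displayed inequality would force $\beta\ge 2560(1+o(1))$; since Theorem~\ref{thm:dense_setting} must cover $\beta=1$ (this is exactly the case used in Corollary~\ref{cor:matching}), the bound fails for all the early rounds of phase one. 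Your later remark that $\hat f_i^{\,2}=\Omega\big(\log(m/n)^2(\log\mu+2)\big)$ is correct but irrelevant, because after dividing by $160\mu_i$ the $\mu$ is gone.

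The paper avoids this by two linked devices. First, it uses the much smaller per-round threshold $s_i:=\beta\mu/\big(16(i+2)^5\big)$, tuned so that $\hat f_i^{\,2}/(16s_i)=(i+2)\log(m/n)$ and hence $m\exp(-\hat f_i^{\,2}/(16s_i))=n(n/m)^{i+1}=o(n_i)$ for every $i\le t_1$. Second---and this is the missing idea---it never applies Lemma~\ref{lem:submatrices} at $\ell=n_i$. Instead it uses monotonicity: if some round $i$ has more than $n_i/17$ edges of restricted size exceeding $s_i$, then, since $n_{t_1}\le n_i$ and $s_{t_1}\le s_i$, one can pass to an $(n_{t_1}/17)\times n_{t_1}$ submatrix of $\bm A$ with every row containing more than $s_{t_1}$ ones. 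Thus a bad round anywhere in phase one forces the single event $Q(n_{t_1}/17,\,n_{t_1},\,s_{t_1})$, and one application of Lemma~\ref{lem:submatrices} at $\ell=n_{t_1}=m/d$ suffices. At that $\ell$ the lemma's admissibility condition is merely $s\ge 10$, so the gap between the small-edge threshold and the lemma's range disappears. Once you have this reduction, both the dyadic decomposition and the per-round $o(1/\log n)$ union bound become unnecessary.
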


\begin{proof}%[Proof of Proposition~\ref{prop:phase_one_error}]

Given $0 \le i \le t_{1}$, we say that round $i$ is \textbf{good}, provided
there are at most $n_{i}/17$ rows of $H_{i}$ whose size is
greater than $s_{i}:= \beta  \mu/16  (i+2)^{5}$, where $\beta$ satisfies \eqref{eqn:beta_asymptotics_mu}. Otherwise,
we say that the round is \textbf{bad}.
Recall that $t_1=\lg{\mu}$.

Now, if round $i$ is good,
then we claim that \textsc{Iterated-Colouring-Algorithm} does
\textit{not} abort in iteration $i$. To see this, 
it suffices to show that for $n$ sufficiently large
\[
    \sum_{e \in E_{i}} \exp\left(-\lambda^{2}_{e}/16 \right) \le n_{i}/16,
\]
where $n_{i}:= |V_{i}|= n/2^i$, $\hat{f}_{i}:= \hat{f}  (i+2)^{-2}$ and $\lambda_{e}:= \hat{f}_{i}/|e|^{1/2}$ for $e \in E_{i}$. Observe now that since the round is good, we get that
\begin{align*}
  \sum_{e \in E_{i}} \exp\left(-\lambda^{2}_{e}/16 \right) &= \sum_{\substack{e \in E_{i}: \\ |e| \le s_{i}}} \exp\left(-\lambda^{2}_{e}/16 \right) + \sum_{\substack{e \in E_{i}: \\ |e| > s_{i}}} \exp\left(-\lambda^{2}_{e}/16 \right)  \\
     &\le m  \exp\left(-\frac{\hat{f}_{i}^{2}}{16  s_{i}} \right) + n_{i}/17.
\end{align*}
On the other hand, since $\hat{f}:= \sqrt{\mu  \log(m/n)  \beta}$,
\begin{align*}
    m  \exp\left( - \hat{f}_{i}^{2}/16  s_{i} \right) &= m  \exp(- \log(m/n)  (i+2))    \\
                                                          &= m  \left(\frac{n}{m} \right)^{i+2} \\
                                                          &= n  \left(\frac{n}{m} \right)^{i+1} = o(n_{i})
\end{align*}
where the last line follows since $(n/m)^{i+1} \ll 2^{-i}$, as $n \ll m$.
Thus,
\[
\sum_{e \in E_{i}} \exp\left(-\lambda_{e}^2/16 \right) \le (1 +o(1))  \frac{n_{i}}{17} \le \frac{n_{i}}{16}.
\]
We now must show that \whp, all of the rounds are good.
Now, observe that if some round $1 \le i \le t_{1}$ is \textit{bad}, then there
exists an $(n_{i}/17) \times n_{i}$ sub-matrix of $\bm{A}$, say $\bm{M}$, in which each row of $\bm{M}$ has greater than
$s_{i}$ 1's. In fact, since $n_{t_{1}} \le n_{i}$ and $s_{t_{1}} \le s_{i}$, we can take a sub-matrix
of $\bm{M}$ (and thus of $\bm{A}$) in which each row has at least $s_{t_1}$ 1's, 
and whose size is $(n_{t_{1}}/17) \times n_{t_{1}}$. Thus,
we observe the following claim:
\begin{enumerate}
\item If a bad round occurs, then there exists a $(n_{t_{1}}/17) \times n_{t_{1}}$ sub-matrix of $\bm{A}$
in which each row has more than $s_{t_1}$ 1's.
\end{enumerate}
Let us define $Q(n_{t_1}/17, n_{t_1}, s_{t_1})$ as this latter event;
namely, that there exists an $(n_{t_{1}}/17) \times n_{t_{1}}$ sub-matrix of $\bm{A}$
in which each row has more than $s_{t_1}$ 1's. In order to complete
the proof, it suffices to show that \whp, $Q(n_{t_1}/17, n_{t_1}, s_{t_1})$ does not 
occur.
Recall $n_{t_1}= n/\mu=m/d$.  as the number of active vertices drops by exactly half in each round. It follows that 
\[
  s_{t_1}=\frac{\beta\mu}{16(t_1+2)^5}\ge\frac{\log{\frac{m}{n}}(\log{\mu}+2)^5}{16(\lg{\mu}+2)^5}\gg5= 10\frac{dn}{2\mu m}\ge10\frac{dn_{t_1}}{m}.
  \]
Thus, we can apply Lemma~\ref{lem:submatrices} to ensure that
\begin{align*}
    % \mb{P}[Q(n_{t_1}/17, n_{t_1}, s_{t_1})] &\le \binom{m}{n_{t_1}/17}  \binom{n}{n_{t_1}}  \exp\left(- \frac{n_{t_1}  s_{t_1}}{34}  \log(s_{t_1}  \mu_{t_{1}}^{-1})\right) \\
    \mb{P}[Q(n_{t_1}/17, n_{t_1}, s_{t_1})] &\le \binom{m}{n_{t_1}/17}  \binom{n}{n_{t_1}}  \exp\left(- \frac{n_{t_1}  s_{t_1}}{34}  \log\left(\frac{s_{t_1}m}{dn_{t_1}}\right)\right) \\
                                            & \le \binom{m}{n_{t_1}}^{2} \exp\left(- \frac{n_{t_1}  s_{t_1}}{34}  \log{s_{t_1}}\right) \\
                                            & \le \left(\frac{m  e}{n_{t_1}} \right)^{2  n_{t_1}}  \exp\left(- \frac{n_{t_1}  s_{t_1}\log{s_{t_1}}}{34}\right) 
\end{align*}
where the inequalities follow since $m \gg n, \binom{m}{n_{t_1}} \le (m  e/n_{t_1})^{ n_{t_1}}$. 
% and $\mu_{t_1} \ge 10  s_{t_1}$.
Now,
\begin{align*}
    \left(\frac{m  e}{n_{t_1}} \right)^{2  n_{t_1}} &=
    \exp\left( 2  n_{t_1}  ( \log(m/n) + \log(n  e/n_{t_1})) \right)\\
                                                    &=    \exp\left( 2  n_{t_1}  ( \log(m/n) + \log(e\mu  )) \right),
\end{align*}
so
\[
  \mb{P}[Q(n_{t_1}/17, n_{t_1}, s_{t_1})] \le \exp\left( -2  n_{t_1}  \left( \frac{s_{t_1}\log{s_{t_1}}}{68}  - \log(m/n) - \log(e\mu) \right)\right).
\]
Thus, by our assumption \eqref{eqn:beta_asymptotics_mu} on $\beta$, we get that
\[
  s_{t_1} = \frac{\beta \mu}{ 16 \left(\lg{\mu} +2 \right)^{5}}
\ge \frac{\log{\frac{m}{n}}(\log{\mu}+2)^5}{16(\lg{\mu}+2)^5}
    % \ge \left(1 + \frac{1}{68} \right)  \log(m/n)
\ge \frac{1}{16}  \log(m/n),
\]
and
\[
  s_{t_1} = \frac{\beta \mu}{ 16 \left(\lg{\mu} +2 \right)^{5}}
\ge 
\frac{\mu}{ 16 \left(\lg{\mu} +2 \right)^{5}},
\]
as $\beta\ge 1$.
The proposition follows as 
\[
  \frac{\log{(m/n)}\log(\log{(m/n)}/16)}{16\cdot 68}\gg\log(m/n),
\]
and
\[
  \frac{\mu}{68\cdot  16 \left(\lg{\mu} +2 \right)^{5}}\gg \log{(\mu e)}.\qedhere
\]
\end{proof}

\begin{prop} \label{prop:phase_two_error}
If \textsc{Iterated-Colouring-Algorithm} inputs a hypergraph
drawn from $\scr{H}(n,m,d)$ or $\mb{H}(n,m,p)$ where $p=d/m$,
then \whp\ it does not abort in phase two,
provided $\mu=d  n/m \rightarrow \infty$
as $n \rightarrow \infty$ and $m \gg n$.
\end{prop}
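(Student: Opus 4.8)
The plan is to follow the template of Proposition~\ref{prop:phase_one_error}, though the argument is lighter because in phase two we have $\lambda_e=0$ for every edge, so the test of Lemma~\ref{lem:partial_colouring} degenerates: round $i$ of phase two aborts if and only if $|E_i|>n_i/16$, where $n_i=|V_i|=n/2^i$ and, after step~1, $E_i=\{\,e\cap V_i : e\in E,\ |e\cap V_i|>\hat f\,\}$. First I would discard the harmless rounds. If $n_i\le\hat f$ then $|e\cap V_i|\le n_i\le\hat f$ for every edge, so $E_i=\emptyset$ and round $i$ cannot abort; and if $\hat f\ge m/(2d)=n_{t_1}/2$ then no round of phase two has a dangerous size at all. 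So it remains to bound the probability that some \emph{dangerous} round aborts, where round $i\in\{t_1+1,\dots,t_2\}$ is dangerous if $n_i>\hat f$; these form a contiguous block with $n_i\in(\hat f,\,n_{t_1}/2]$, and there are at most $\lg n=O(\log n)$ of them (recall $n_{t_1}=m/d$ and $t_1=\lg\mu$).

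Next I would feed a dangerous abort into Lemma~\ref{lem:submatrices}. If round $i$ aborts, then taking $\lceil n_i/16\rceil$ of the edges with $|e\cap V_i|>\hat f$ as rows and $V_i$ as columns produces a $\lceil n_i/16\rceil\times n_i$ submatrix of $\bm A$ all of whose rows have more than $\hat f$ ones, so the event $Q(\lceil n_i/16\rceil,n_i,\lfloor\hat f\rfloor)$ holds; its hypothesis $\lfloor\hat f\rfloor\ge 10dn_i/m$ is met because $dn_i/m=\mu/2^i\le1/2$ for $i\ge t_1+1$ while $\hat f\to\infty$. Writing $x:=n_{t_1}/n_i=m/(dn_i)\ge2$, Lemma~\ref{lem:submatrices} gives
\[
\Pr\big[Q(\lceil n_i/16\rceil,n_i,\lfloor\hat f\rfloor)\big]\le\binom{m}{\lceil n_i/16\rceil}\binom{n}{n_i}\exp\!\Big(-\tfrac12\lceil n_i/16\rceil\lfloor\hat f\rfloor\log(\lfloor\hat f\rfloor x)\Big).
\]
The key quantitative input is supplied by~\eqref{eqn:beta_asymptotics_mu}: it forces $\mu\beta\ge\log(m/n)$, hence $\hat f=\sqrt{\mu\log(m/n)\beta}\ge\log(m/n)$, while trivially $\hat f\ge\sqrt\mu$, so $\hat f\log\hat f\gg\log\mu+\log(m/n)=\log d$. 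Using $m/n_i=dx$ and $n/n_i=\mu x$ to bound $\binom{m}{\lceil n_i/16\rceil}\le(16edx)^{\lceil n_i/16\rceil}$ and $\binom{n}{n_i}\le(e\mu x)^{n_i}$, and then collecting the $\log x$ contributions---which enter the exponent with weight $\Theta(\hat f\, n_i)$ but enter the logarithms of the two binomials only with weight $O(n_i)$---I would conclude, for all large $n$,
\[
\Pr\big[Q(\lceil n_i/16\rceil,n_i,\lfloor\hat f\rfloor)\big]\le\exp\!\big(-\Omega\big(n_i\,\hat f\log(\hat f x)\big)\big).
\]

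To finish, I would observe that $y\mapsto y\log(\hat f\,n_{t_1}/y)$ is increasing on $(0,\hat f\,n_{t_1}/e)$, a range containing $(\hat f,\,n_{t_1}/2]$, so $n_i\log(\hat f x)=n_i\log(\hat f\,n_{t_1}/n_i)\ge\hat f\log n_{t_1}$ for every dangerous $i$; hence each term above is at most $\exp(-\Omega(\hat f^2\log n_{t_1}))$. Since $\mu\le\hat f^2$ we have $\log n_{t_1}=\log(n/\mu)\ge\log n-2\log\hat f$, and a two-line case split (according as $\hat f\le n^{1/4}$ or $\hat f> n^{1/4}$) then shows $\hat f^2\log n_{t_1}=\omega(\log n)$ throughout the admissible parameter range. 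A union bound over the $O(\log n)$ dangerous rounds gives $\Pr[\text{some round of phase two aborts}]\le O(\log n)\,\exp(-\omega(\log n))=o(1)$, which proves Proposition~\ref{prop:phase_two_error}; note the whole argument is model‑agnostic since Lemma~\ref{lem:submatrices} covers both $\scr H(n,m,d)$ and $\mb H(n,m,p)$.

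The main obstacle is the second step: one must resist bounding $\log(\hat f x)$ below by $\log\hat f$, because it is precisely when $n_i$ approaches $\hat f$ that the ratio $x=n_{t_1}/n_i$ (and hence the entropy factor $(e\mu x)^{n_i}$) becomes large, and only the $\log x$ part of the exponent, carrying its $\Theta(\hat f)$ weight, can absorb it. The other slightly delicate point is the uniform estimate $\hat f^2\log n_{t_1}=\omega(\log n)$, which is exactly where the apparently generous fifth power in~\eqref{eqn:beta_asymptotics_mu} pays off---through $\hat f\ge\log(m/n)$ together with $\mu\le\hat f^2$.
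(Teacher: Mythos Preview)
Your argument is correct and in fact tightens up the paper's own proof. Both use Lemma~\ref{lem:submatrices}, but the paper collapses all phase-two aborts to the single event $Q(n_{t_2}/16,n_{t_2},\hat f)$, asserting that ``since $n_{t_2}\le n_i$'' one may pass from an $(n_i/16)\times n_i$ witnessing submatrix to an $(n_{t_2}/16)\times n_{t_2}$ one. That step is not justified: shrinking the column set from $n_i$ to $n_{t_2}$ can only \emph{decrease} row sums, and indeed, since $n_{t_2}=\hat f/20<\hat f$, the target event is vacuous (the paper's subsequent simplification, borrowed verbatim from Proposition~\ref{prop:phase_one_error}, also silently replaces $\log(en/n_{t_2})$ by $\log(e\mu)$, which fails precisely when $\hat f\ll n_{t_1}$). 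Your union bound over the $O(\log n)$ dangerous rounds, with a per-round estimate on $Q(\lceil n_i/16\rceil,n_i,\lfloor\hat f\rfloor)$, is the natural repair. The point you flag as the main obstacle---that one must retain the $\log x$ contribution with its $\Theta(\hat f)$ weight in order to absorb the entropy $(e\mu x)^{n_i}$ as $n_i$ shrinks toward $\hat f$---is exactly what a single-event reduction misses, and your monotonicity trick via $y\mapsto y\log(\hat f\,n_{t_1}/y)$ cleanly yields the uniform lower bound $n_i\log(\hat f x)\ge\hat f\log n_{t_1}$ needed for the final union bound.
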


\begin{proof}%[Proof of Proposition~\ref{prop:phase_two_error}]

Suppose that $t_{1} +1 \le i \le t_{2}$ for
$t_{2}:= \lg(10  n/\hat{f}) + 1$.
Recall $n_{t_2}= n/2(10n/\hat{f})=\hat{f}/20$ and
that during phase two, $\lambda_{e}=0$ for each $e \in E_{i}$.
Thus, we get that
\[
    \sum_{e \in E_{i}} \exp\left(\lambda_{e}^{2}/16\right) = |E_{i}|.
\]
On the other hand, in order for an edge of $H$ to remain in $E_{i}$, it must have greater than 
$\hat{f}$ vertices which lie in $V_{i}$. As a result, if \textsc{Iterated-Colouring-Algorithm} aborts
in round $i$, then $H_i$ must have at least $n_{i}/16$ edges of size greater than $\hat{f}$. In particular, since
$n_{t_{2}} \le n_{i}$, this implies that the incidence matrix $\bm{A}$ of $H$ has an $(n_{t_{2}}/16) \times n_{t_{2}}$ sub-matrix in which
each row has greater than $\hat{f}$ 1's. Thus, if $Q(n_{t_2}/16, n_{t_2}, \hat{f})$ corresponds to the
event in which $\bm{A}$ has an $(n_{t_{2}}/16) \times n_{t_{2}}$ sub-matrix in which each row has greater than $\hat{f}$ 1's,
then we get the following claim:
\begin{enumerate}
    \item If \textsc{Iterated-Colouring-Algorithm} aborts in some round $t_{1} + 1 \le i \le t_{2}$, then
    $Q(n_{t_2}/16, n_{t_2}, \hat{f})$ must occur.
\end{enumerate}
As a result, in order to show that \whp\ \textsc{Iterated-Colouring-Algorithm} does \textit{not} abort in any round
it suffices to prove that $Q(n_{t_2}/16, n_{t_2}, \hat{f})$ does not occur {\whp}
Now, it follows that 
\[
  \hat{f} \ge10\frac{d\hat{f}}{20m} 
 =10\frac{dn_{t_2}}{m},
\]
as $d\le m$. Thus, we can apply Lemma~\ref{lem:submatrices} to ensure that
\begin{align*}
  % \mb{P}[ Q(n_{t_2}/16, n_{t_2}, \hat{f})] \le  \binom{m}{n_{t_2}/16} \binom{n}{n_{t_2}}  \exp\left(- \frac{n_{t_2}  \hat{f}}{32}  \log(\hat{f}  \mu_{t_{2}}^{-1})\right),
  \mb{P}[ Q(n_{t_2}/16, n_{t_2}, \hat{f})] 
  &\le  \binom{m}{n_{t_2}/16} \binom{n}{n_{t_2}}  \exp\left(- \frac{n_{t_2}  \hat{f}}{32}  \log\left(\frac{\hat{f}m}{dn_{t_2}}\right)\right)\\
  &\le  \binom{m}{n_{t_2}/16} \binom{n}{n_{t_2}}  \exp\left(- \frac{n_{t_2}  \hat{f}}{32}  \log\left(\frac{20m}{d}\right)\right),
\end{align*}
and so $\mb{P}[ Q(n_{t_2}/16, n_{t_2}, \hat{f})]$ is upper bounded by
\begin{equation}\label{eqn:final_probability_phase_two}
     \exp\left(-2  n_{t_2} \left( \frac{\hat{f}}{64}  - \log(m/n) - \log(\mu e)\right) \right),
\end{equation}
after applying the same simplifications as in Proposition~\ref{prop:phase_one_error}.
The proposition then follows by assumption (\ref{eqn:beta_asymptotics_mu}) on $\beta$, as 
\[
  \hat{f}/64=\sqrt{\beta\mu \log{(m/n)}}/64 \ge \frac{\log{(m/n)}\log^5{(\mu+2)}}{ 64}\gg\log{m/n},
\]
and
\[
  \frac{\log{(m/n)}\log^5{(\mu+2)}}{ 64}\gg \log{(\mu e)}.\qedhere
\]
\end{proof}
}%%%%END of to appendix

\section{Conclusion and Open Problems}
We have lower bounded the discrepancy
of the random hypergraph models $\mb{H}(n,m,p)$ and $\scr{H}(n,m,d)$ for the full parameter range in which $d \rightarrow \infty$ and $d n/m \rightarrow \infty$ where $p=d/m$. In the dense regime of $m \gg n$, we have provided asymptotically matching upper bounds, under the assumption that  $d = pm  \ge (m/n)^{1+\eps}$ for some constant $\eps>0$. These upper bounds are algorithmic, and so the main question left open by our work is whether analogous upper bounds can be proven in the sparse regime of $n/\log n \ll m \ll n$. Our lower bounds suggest that the discrepancy is $\Theta\left(2^{-n/m} \sqrt{p n} \right)$,
and while we believe that a second moment argument could be used to prove the existence of such a colouring---particularly, in the edge-independent model $\mb{H}(n,m,p)$---the partial colouring lemma does not seem to be of much use here. This leaves open whether such a colouring can be computed efficiently in this parameter range. If this is not possible, then ideally one could find a reduction to a problem which is believed to be hard on average. One candidate may be the random-lattice problem of Ajtai~\cite{Ajtai1996} and Goldreich et al.~\cite{Goldreich2011}, in which a random $m$ by $n$ matrix $\bm{M}$ with \iid\ entries from $\mb{Z}_{q}$ is generated,
and one wishes to compute a vector $\bm{x} \in \{0,1\}^{n}$ such that $\bm{M} \bm{x} =0$.

\lv{
\section*{Acknowledgements}
This work was initiated at the 2019 Graduate Research Workshop in Combinatorics, which was supported in part by NSF grant \#1923238, NSA grant \#H98230-18-1-0017, a generous award from the Combinatorics Foundation, and Simons Foundation Collaboration Grants \#426971 (to M.~Ferrara), \#316262 (to S.~Hartke) and \#315347 (to J.~Martin). We thank Aleksandar Nikolov for suggesting
the problem, and Puck Rombach and Paul Horn for discussions and encouragements in the early stages of the project.
Moreover, T.~Masa\v{r}\'ik received funding from the European Research Council (ERC) under the European Union’s Horizon 2020 research and innovation programme Grant Agreement 714704. He completed a part of this work while he was a postdoc at Simon Fraser University in Canada, where he was supported through NSERC grants R611450 and R611368.
X.~P\'erez-Gim\'enez was supported in part by Simons Foundation Grant \#587019.
}

\printbibliography

\appendix

\appendixText

\end{document}